\documentclass[12pt,a4paper]{article} 

\usepackage{graphicx}
\usepackage{amsfonts} 
\usepackage{amsmath} 
\usepackage{amssymb}
\usepackage{amsthm}
\usepackage[ansinew]{inputenc}
\usepackage{latexsym}
\usepackage{tabularx}
\usepackage{color}
\usepackage{enumerate}
\usepackage[active]{srcltx}
\vfuzz2pt 
\hfuzz2pt 
\setlength{\bigskipamount}{5ex plus1.5ex minus 2ex}
\setlength{\textheight}{24cm} \setlength{\textwidth}{16cm}
\setlength{\hoffset}{-1.3cm} \setlength{\voffset}{-1.8cm}
\allowdisplaybreaks
\newtheorem{thm}{Theorem}

\newtheorem{lem}{Lemma}

\newtheorem{rem}{Remark}

\newtheorem{examp}{Example}


\newcommand{\abs}[1]{\left\vert#1\right\vert}

\newcommand{\eps}{\varepsilon}

\newcommand{\mmax}{{m_{\rm max}}}
\newcommand{\mmin}{{m_{\rm min}}}
\newcommand{\APP}{{\rm APP}}

\newcommand{\bsa}{\boldsymbol{a}}
\newcommand{\bsn}{\boldsymbol{n}}

\newcommand{\bsk}{\boldsymbol{k}}
\newcommand{\bsm}{\boldsymbol{m}}
\newcommand{\bsl}{\boldsymbol{l}}

\newcommand{\bsx}{\boldsymbol{x}}

\newcommand{\bsh}{\boldsymbol{h}}

\newcommand{\bsr}{\boldsymbol{r}}

\newcommand{\bsb}{\boldsymbol{b}}

\newcommand{\cO}{{\cal O}}

\newcommand{\bsy}{\boldsymbol{y}}

\newcommand{\wal}{{\rm wal}}

\newcommand{\icomp}{\mathtt{i}}

\newcommand{\rd}{\,\mathrm{d}}
\newcommand{\e}{\,\varepsilon}

\newcommand{\NN}{\mathbb{N}}
\newcommand{\ZZ}{\mathbb{Z}}
\newcommand{\RR}{\mathbb{R}}
\newcommand{\CC}{\mathbb{C}}

\newcommand{\cB}{{\cal B}}

\newcommand{\h}{{\rm Her}}

\newcommand{\EXCLUDE}[1]{}

\makeatletter
\newcommand{\rdots}{\mathinner{\mkern1mu\lower-1\p@\vbox{\kern7\p@\hbox{.}}
\mkern2mu \raise4\p@\hbox{.}\mkern2mu\raise7\p@\hbox{.}\mkern1mu}}
\makeatother

\date{}

\begin{document}

\title{Tractability of Multivariate Approximation Defined \\
over Hilbert Spaces with Exponential Weights }

\author{Christian Irrgeher\thanks{C. Irrgeher is supported
by the Austrian Science Fund (FWF): Project F5509-N26, which is a
part of the Special Research Program "Quasi-Monte Carlo Methods:
Theory and Applications".}\;, Peter Kritzer\thanks{P. Kritzer is
supported by the Austrian Science Fund (FWF): Project F5506-N26,
which is a part of the Special Research Program "Quasi-Monte Carlo
Methods: Theory and Applications".}\;, Friedrich
Pillichshammer\thanks{F. Pillichshammer is supported by the
Austrian Science Fund (FWF): Project F5509-N26, which is a part of
the Special Research Program
"Quasi-Monte Carlo Methods: Theory and Applications".},\\
Henryk Wo\'{z}niakowski\thanks{H. Wo\'zniakowski is supported by
the NSF and by the National Science Centre, Poland, based on the
decision DEC-2013/09/B/ST1/04275.}}

\maketitle

%
%


\begin{abstract}
We study multivariate approximation defined over tensor product
Hilbert spaces. The domain space is a weighted tensor product
Hilbert space with exponential weights which depend on two
sequences $\bsa=\{a_j\}_{j\in\NN}$ and $\bsb=\{b_j\}_{j\in\NN}$ of
positive numbers, and on a bounded sequence of positive integers
$\bsm=\{m_j\}_{j\in\NN}$. The sequence $\bsa$ is non-decreasing
and the sequence $\bsb$ is bounded from below by a positive
number. We find necessary and sufficient conditions on $\bsa,\bsb$
and $\bsm$ to achieve the standard and new notions of
tractability in the worst case setting.
\end{abstract}

\noindent\textbf{Keywords:} Multivariate Approximation, Tractability, Hilbert Spaces with Exponential Weights\\
\noindent\textbf{2010 MSC:} 41A25, 41A63, 65D15, 65Y20

\section{Introduction}\label{sectractability}
We approximate $s$-variate problems by algorithms
that use finitely many linear functionals.
The information complexity $n(\eps,s)$ is defined as the minimal
number of linear functionals 
which are needed
to find an approximation to within an error threshold $\eps$.

The standard notions of tractability deal with the
characterization of $s$-variate problems for which the information
complexity $n(\eps,s)$ is \emph{not} exponential in $\eps^{-1}$
and $s$. Since there are many different ways of measuring the lack
of the exponential dependence we have various notions of
tractability. For instance, weak tractability (WT) means that
$\log\,n(\eps,s)/(s+\eps^{-1})$ goes to zero as $s+\eps^{-1}$
approaches infinity, whereas quasi-polynomial tractability (QPT)
means that $n(\eps,s)$ can be bounded for all $s\in\NN$ and all
$\eps\in(0,1]$ by $C\,\exp(t\,(1+\log\,s)(1+\log\,\eps^{-1}))$ for
some $C$ and $t$ independent of both $\eps^{-1}$ and $s$.
Analogously, we have polynomial tractability (PT) if $n(\eps,s)$
can be bounded by a polynomial in $\eps^{-1}$ and $s$, and strong
polynomial tractability (SPT) if $n(\eps,s)$ can be bounded by a
polynomial in $\eps^{-1}$ for all $s$. 
These notions of tractability have been extensively studied in
many papers and the current state of the art in this field can be
found in \cite{NW08,NW10,NW12}.

The notion of WT was recently refined in \cite{SW14} by
introducing $(t_1,t_2)$-weak tractability ($(t_1,t_2)$-WT) by
assuming that $\log\,n(\eps,s)/(s^{t_1}+\eps^{-t_2})$ goes to zero
as $s+\eps^{-1}$ approaches infinity 
for some positive $t_1$ and $t_2$. 
Uniform weak tractability (UWT) was defined in~\cite{S13} by assuming
that $(t_1,t_2)$-WT holds for all $t_1,t_2\in(0,1]$. 
It is easy to check that for $t_1,t_2\in(0,1]$ 
we have the following hierarchy
$$
\mbox{SPT}\ \Rightarrow\ \mbox{PT}\ \Rightarrow\ \mbox{QPT}\
\Rightarrow\ \mbox{UWT} \ \Rightarrow \ 
(t_1,t_2)\mbox{-WT}\  \Rightarrow
\ \mbox{WT}.
$$

All these standard notions are appropriate for $s$-variate
problems for which the minimal errors are
polynomially decaying. That is, for any $n\in\NN$ we can find $n$
linear functionals and an algorithm using these $n$ linear
functionals whose error decays like $\mathcal{O}(n^{-p})$ for some
positive $p$ and with the factor in the big $\mathcal{O}$ notation
that may depend on $s$.

There is a stream of work with new notions of tractability which
is relevant for $s$-variate problems for which the
minimal errors are exponentially decaying, see
\cite{DKPW13,DLPW11,IKLP14,KPW14,KPW14a,PP14}. 
The new notions of
tractability correspond to the standard notions of tractability
but for the pair $(s,1+\log\,\eps^{-1})$ instead of the pair
$(s,\eps^{-1})$. For instance the new notion of strong polynomial
tractability means that we can bound $n(\eps,s)$ by a polynomial
in $1+\log\,\eps^{-1}$ for all $s\in\NN$. Obviously, the new
notions of tractability are more demanding than the standard ones.
To distinguish them from the standard notions we add the prefix EC
(exponential convergence) and we have EC-WT, EC-UWT,
EC-$(t_1,t_2)$-WT,
EC-QPT, EC-PT, and EC-SPT. For $t_1,t_2\in(0,1]$, we obviously
have 
$$
\mbox{EC-SPT}\ \Rightarrow\ \mbox{EC-PT}\ \Rightarrow\ \mbox{EC-QPT}\
\Rightarrow\ \mbox{EC-UWT} \ \Rightarrow \ 
\mbox{EC-}(t_1,t_2)\mbox{-WT}\  \Rightarrow
\ \mbox{EC-WT}.
$$

We study $(t_1,t_2)$-WT and  EC-$(t_1,t_2)$-WT for
general positive $t_1$ and $t_2$, i.e., dropping the assumption
that they are from $(0,1]$. Obviously, if $t_1>1$ we do not have
an exponential dependence on $s^{\,t_1}$ but we may have the
exponential dependence on $s^{\,\tau}$ for $\tau<t_1$. For
$\tau=1$, we may have an exponential dependence on $s$ which is
usually called the curse of of dimensionality. Nevertheless, the
parameters $t_1$ and $t_2$ control the level of exponential
behaviour with respect to $s$ and $\eps^{-1}$, and it seems to be
an interesting problem to find the minimal, say, $t_1$ for which
we have $(t_1,t_2)$-WT or EC-$(t_1,t_2)$-WT.

In this paper we study all these standard and new notions of
tractability. This is done for general multivariate approximation
defined over tensor product Hilbert spaces in the worst case
setting. The construction of our problem is roughly as follows.
For $s=1$, we take a separable Hilbert space $H$ of infinite
dimension with an orthonormal basis $\{e_k\}_{k \in \NN_0}$, where
$\NN_0=\{0,1,2,\ldots\}$, and inner product $\langle \cdot , \cdot
\rangle_H$. In general, we do not assume that $H$ is a space of
functions or that it is a reproducing kernel Hilbert space.
Therefore we can only consider
linear functionals as information used
by algorithms.

{}From the space $H$, we construct a weighted Hilbert space in the
following way. For given positive numbers 
$a,b,\omega$ with $\omega\in(0,1)$,
and a bounded sequence $\bsm=\{m_k\}_{k \in \NN_0}$ of positive
integers, 
the Hilbert space $H_{a,b}$ is a subspace of $H$
for which $f\in H_{a,b}$ iff
$$
\|f\|_{H_{a,b}}:=\left(
\sum_{j=0}^{m_0-1}|\left<f,e_j\right>_H|^2\,+
\sum_{k=1}^\infty \omega^{-ak^b}\,
\sum_{j=0}^{m_k-1}|\left<f,e_{m_0+\cdots+m_{k-1}+j}\right>_H|^2\,
\right)^{1/2}
<\infty.
$$
Note that $\omega^{-ak^b}$ goes exponentially fast to infinity
with $k$. Therefore, $\|f\|_{H_{a,b}}<\infty$ means that the sum
of $|\left<f,e_{m_0+\cdots+m_{k-1}+j}\right>_H|^2$ 
for  $j=0,1,\dots,m_k-1$
must decay exponentially fast with $k$.

The univariate approximation problem $\APP_1:H_{a,b}\to H$ is
defined as the embedding operator $\APP_1f=f$. The $s$-variate
approximation problem
$$\APP_s:H_{s,\bsa,\bsb}\to H_s:=\bigotimes_{j=1}^sH$$
is the embedding operator $\APP_sf=f$, where
$$
H_{s,\bsa,\bsb}:=H_{a_1,b_1}\otimes H_{a_2,b_2}\otimes \cdots
\otimes H_{a_s,b_s}
$$
is the $s$-fold tensor product of the weighted spaces
$H_{a_j,b_j}$. Here $\bsa=\{a_j\}_{j\in\NN}$ and
$\bsb=\{b_j\}_{j\in\NN}$. We assume that $a_1>0$, the $a_j$'s are
nondecreasing, and  $\inf_jb_j>0$.

The space $H_{s,\bsa,\bsb}$ is a subset of $H_s$ with
exponentially decaying coefficients in the basis of~$H_s$. The
speed of the decay depends on the parameters $\bsa,\bsb,\bsm$ and
$\omega$ of the problem.

Special instances of the spaces $H_{s,\bsa,\bsb}$ are weighted Hermite
and Korobov spaces which were already analyzed in the papers
mentioned before. In fact, similarity in the analysis of weighted
Hermite and Korobov spaces was an indication that more general
weighted spaces can be also analyzed and it was the beginning of
this paper. Other 
special instances of $H_{s,\bsa,\bsb}$
are $\ell_2$, cosine and Walsh spaces which have
not been analyzed in this context before.

The weighted Hermite and Korobov spaces consist of \emph{analytic}
functions. This property is \emph{not} shared, in general, for
spaces $H_{s,\bsa,\bsb}$. As for the univariate case, the space
$H_{s,\bsa,\bsb}$ does not have to be a space of functions. But
even if we assume that $H_{s,\bsa,\bsb}$ is a space of functions
then the functions $e_k$ do not have to be analytic or even
smooth (for example this is the case for the Walsh space). 
It turns out that analyticity or smoothness of the functions
$e_k$ is irrelevant. Instead, the exponential decay of the
coefficients in the basis of $H_s$ is important. That is why the
results for the space $H_{s,\bsa,\bsb}$ are similar to the results
for the weighted Hermite and Korobov spaces. 

We now briefly summarize the main results obtained in this paper.
We first study when exponential convergence (EXP) and uniform
exponential convergence (UEXP) hold. EXP holds if there is
$q\in(0,1)$ such that for all $s\in\NN$ we can find positive
$C_s,M_s,p_s$ for which the $n$th minimal worst case error for
approximating $\APP_s$, see Section~\ref{secapprox}, is bounded by
$$
C_s\,q^{\,(n/M_s)^{p_s}}\ \ \ \ \mbox{for all}\ \ n\in\NN.
$$
The supremum of such $p_s$ is called the exponent of EXP and
denoted by $p_s^*$. UEXP holds if we can take $p_s=p>0$ for all
$s\in\NN$, and the supremum of such $p$ is called the exponent of
UEXP and denoted by $p^*$.

We prove that EXP holds always with no extra conditions on the
parameters $\bsa,\bsb,\bsm,\omega$, and
$p_s^*=1/\sum_{j=1}^sb_j^{-1}$, whereas UEXP holds iff
$B:=\sum_{j=1}^\infty b_j^{-1}<\infty$ and then $p^*=1/B$. Hence,
UEXP only requires that $b_j^{-1}$'s are summable and there are no
extra conditions on the rest of the parameters.

We now turn to tractability. We obtain necessary and sufficient
conditions on 
standard and new notions of tractability in
terms of the parameters $\bsa,\bsb,\bsm$ and
$\omega$ of the problems. 
Such conditions were not known before even for weighted
Hermite or Korobov spaces. More precisely, UWT, QPT,  EC-UWT, EC-QPT 
as well as $(t_1,t_2)$-WT and EC-$(t_1,t_2)$-WT were not studied before
for the weighted 
Korobov spaces, and approximation has not been 
studied at all for Hermite spaces before.

To stress that we approximate $\APP_s$, we denote the information
complexity $n(\eps,s)$ 
by $n(\eps,\APP_s)$, see again Section~\ref{secapprox}.
In this paper we
present specific lower and upper bounds 
on $n(\eps,\APP_s)$ from which we conclude
various notions of standard and new tractability. We also present
estimates of the tractability exponents. They are defined as the
infimum of $t$ for QPT and EC-QPT, or the infimum of the degree of 
polynomials in $\eps^{-1}$ for SPT and in $1+\log\,\eps^{-1}$ for
EC-SPT  which bound the information complexity $n(\eps,\APP_s)$.
We usually do not have the exact values of these exponents but
only lower and  upper bounds. It would be of interest to improve
these bounds. In this section, we only mention when various tractability
notions hold. We prove:

\begin{itemize}
\item $(t_1,t_2)$-WT holds for the parameters
$\bsa,\bsb,\bsm$ and $\omega$ iff $t_1>1$ or $m_0=1$.
\item EC-$(t_1,t_2)$-WT holds for the parameters
$\bsa,\bsb,\bsm$ and $\omega$ iff 
$t_1>1$, \newline or $t_2>1$ and $m_0=1$.
\item WT holds iff $m_0=1$, whereas EC-WT holds iff
$$
m_0=1\ \ \mbox{ and } \ \lim_{j\to\infty}\, a_j=\infty.
$$
\item UWT holds iff $m_0=1$, whereas EC-UWT holds iff 
$$
m_0=1\ \ \mbox{ and } \ \lim_{j\to\infty}\ \frac{\log\, a_j}{\log\,j}=\infty.
$$ 
\item QPT holds iff $m_0=1$, whereas EC-QPT holds iff
$$
m_0=1, \ \ \sup_{s\in\NN}\
\frac{\sum_{j=1}^sb_j^{-1}}{1+\log\,s}<\infty,\ \ \mbox{ and } \
\liminf_{j\to\infty}\ \frac{(1+\log\,j)\,\log\,a_j}{j}>0.
$$
\item PT holds iff SPT holds iff\footnote{Under a simplifying
assumption that the limit of $a_j/\log\,j$ exists.}
$$
m_0=1\ \ \mbox{ and } \ \lim_{j\to\infty}\ \frac{a_j}{\log j}>0.
$$
\item EC-PT holds iff EC-SPT holds iff
$$
m_0=1,\ \ \ \sum_{j=1}^{\infty}b_j^{-1}<\infty,\ \ \mbox{ and } \
\liminf_{j\to\infty}\ \frac{\log\,a_j}{j}>0.
$$
\end{itemize}

We would like to mention that the results for EC-UWT and EC-QPT for 
one particular example of $H_{s,a,b}$ (of Korobov type -- see 
Example \ref{korobov}) were independently shown by Guiqiao Xu
(private communication).  

Observe that for $m_0>1$, only $(t_1,t_2)$-WT and
EC-$(t_1,t_2)$-WT with $t_1>1$ hold. The reason is that
$$
n(\eps,\APP_s)\ge m_0^s\ \ \ \ \mbox{for all}\ \ \eps\in(0,1),
$$
and we have the curse of dimensionality. This also shows that the
condition $t_1>1$ is sharp. So we have to assume that $m_0=1$ to
obtain other notions of tractability in terms of the conditions on
$\bsa$ and $\bsb$. Interestingly enough there are no conditions on
$m_k$ for $k>0$ and on $\omega$. However, the exponents of
tractability as well as constants
depend on $m_k$ for $k>0$ and on~$\omega$. 
We illustrate the necessary and sufficient conditions on various
notions of tractability for $m_0=1$ and for
$$
a_j=j^{\,v_1}\,\exp(v_2\,j)\ \ \ \ \mbox{and}\ \ \ \ b_j=j^{\,v_3} \ \ \mbox{ for } j \ge 1
$$
for some non-negative $v_1,v_2$ and $v_3$. Then
\begin{itemize}
\item EXP, $(t_1,t_2)$-WT,
EC-$(t_1,t_2)$-WT with $t_1>1$, WT
and QPT hold for all $v_1,v_2,v_3$, \item UEXP holds iff $v_3>1$,
\item EC-WT, PT and SPT hold iff $v_1^2+v_2^2>0$, \item EC-PT and
EC-SPT hold iff $v_2>0$ and $v_3>1$.
\end{itemize}

The remaining sections of this paper are structured 
in the following way. We provide 
detailed information on the Hilbert spaces which are studied in the paper
in Section~\ref{sec:WHS}.
We outline the setting of the approximation problem 
in Section~\ref{secapprox}. The results on 
exponential and uniform exponential convergence 
are shown in Section~\ref{secexp}.
In Section~\ref{sectract} we prove 
the results on the various notions of tractability. A table which summarizes all conditions
is presented in Section~\ref{sec_sum}.


\section{Weighted Hilbert Spaces}\label{sec:WHS}
Let $H$ be a separable Hilbert space over the real or complex
field. To omit special cases, we also assume that $H$ has infinite
dimension. Let $\{e_k\}_{k\in\NN_0}$ be its orthonormal basis,
$\left<e_k,e_j\right>_{H}=\delta_{k,j}$ for all $k,j\in\NN_0$.
Hence, for $f\in H$ one has
$$
f=\sum_{k=0}^{\infty} \left<f,e_k\right>_H\,e_k\ \ \ \ \mbox{with}\ \
\ \ \sum_{k=0}^{\infty}|\left<f,e_k\right>_H|^2<\infty.
$$
For $s\in\NN:=\{1,2,\dots\}$, by $H_s=H\otimes
H\otimes\cdots\otimes H$ we mean the $s$-fold tensor product of
$H$. For $\bsk=[k_1,k_2,\dots,k_s]\in\NN_0^s$, let
$e_{\bsk}=e_{k_1}\otimes e_{k_2}\otimes\cdots\otimes e_{k_s}$.
Clearly, $\{e_{\bsk}\}_{\bsk\in\NN_0^s}$ is an orthonormal basis
of $H_s$ and for $f\in H_s$ one has
$$
f=\sum_{\bsk\in\NN_0^s} \left<f,e_{\bsk}\right>_{H_s}\,e_{\bsk}\ \
\ \ \mbox{with}\ \ \ \ \sum_{\bsk\in\NN_0^s}|\left<f,e_{\bsk}
\right>_{H_s}|^2<\infty.
$$

We now define a weighted Hilbert space which will depend on a
number of parameters. Some of these parameters will be fixed while
others will be varying. The fixed parameters are: a number
$\omega\in(0,1)$ and a bounded sequence $\bsm=\{m_k\}_{k\in\NN_0}$
of positive integers. 
With the sequence
$\bsm$ we associate a sequence $\bsr=\{r_k\}_{k\in\NN_0}$ given by
\begin{eqnarray*}
r_0&=&0,\\
r_k&=&m_0+m_1+\dots+m_{k-1}\ \ \ \ \mbox{for all}\ \ k\in\NN.
\end{eqnarray*}
Clearly, $r_{k+1}=r_k+m_k\ge r_k+1$. Furthermore,
$$
\NN_0=\bigcup_{k=0}^\infty \{r_k,r_k+1,\ldots,r_{k+1}-1\}
$$
and the sets $\{r_k,r_k+1,\ldots,r_{k+1}-1\}$ are disjoint.

The varying parameters are positive real numbers $a$ and $b$. The
weighted Hilbert space will be therefore denoted by $H_{a,b}$ and
is defined as
$$
H_{a,b}=\bigg\{\,f\in H\ : \ \|f\|_{H_{a,b}}:=\left(
\sum_{k=0}^\infty \omega^{-ak^b}\,\sum_{j=r_k}^{r_{k+1}-1}
|\left<f,e_j\right>_H|^2\right)^{1/2}<\infty\ \bigg\}.
$$
As an example, consider $m_k\equiv 1$. Then $r_k=k$ and
$$
\|f\|_{H_{a,b}}:=\left( \sum_{k=0}^\infty \omega^{-ak^b}\,
|\left<f,e_k\right>_H|^2\right)^{1/2}.
$$
For a general $\bsm$, note that $\omega^{-ak^b}$ goes
exponentially fast to infinity with $k$. Therefore
$\|f\|_{H_{a,b}}<\infty$ means that
$\sum_{j=r_k}^{r_{k+1}-1}|\left<f,e_j\right>_H|^2$ must decay
exponentially fast to zero as $k$ goes to infinity.

The inner product in $H_{a,b}$ is given for $f,g\in H_{a,b}$ by
$$
\left<f,g\right>_{H_{a,b}}= \sum_{k=0}^\infty
\omega^{-ak^b}\,\sum_{j=r_k}^{r_{k+1}-1} \left<f,e_j\right>_H
\overline{\left<g,e_j\right>_H}.
$$
Since $\omega^{-ak^b}\ge1$, we have
\begin{equation}\label{1}
\|f\|_H\le\|f\|_{H_{a,b}}\ \ \ \ \mbox{for all}\ \ f\in H_{a,b}.
\end{equation}

We now find an orthonormal basis $\{e_{n,a,b}\}_{n\in\NN_0}$ of
$H_{a,b}$. For $n\in\NN_0$, there is
a unique $k=k(n)$ such
that $n\in\{r_{k(n)},r_{k(n)}+1,\ldots,r_{k(n)+1}-1\}$. Then we
set
$$
e_{n,a,b}=\omega^{a[k(n)]^b/2}\,e_n.
$$

We now verify that the sequence $\{e_{n,a,b}\}_{n\in\NN_0}$ is
orthonormal in $H_{a,b}$. Indeed, take $n_1,n_2\in\NN_0$. Then
\begin{eqnarray*}
\left<e_{n_1,a,b},e_{n_2,a,b}\right>_{H_{a,b}}&=&
\sum_{k=0}^\infty\omega^{-ak^b}\,\sum_{j=r_k}^{r_{k+1}-1}
\left<e_{n_1,a,b},e_j\right>_H\,\overline{\left<e_{n_2,a,b},e_j\right>_H}\\
&=& \sum_{k=0}^\infty\omega^{-ak^b+a[k(n_1)]^b/2+a[k(n_2)]^b/2}\,
\sum_{j=r_k}^{r_{k+1}-1}
\left<e_{n_1},e_j\right>_H\,\overline{\left<e_{n_2},e_j\right>_H}.
 \end{eqnarray*}
Suppose that $n_1\not=n_2$. Then the last sum over $j$ is zero for
all $k\in\NN_0$ due to the orthonormality of $\{e_j\}_{j\in
\NN_0}$. Suppose now that $n_1=n_2$. Then  the only non-zero term
is for $k=k(n_1)$ and $j=n_1$, so that the sum is~$1$. Hence,
$\left<
  e_{n_1,a,b},e_{n_2,a,b}\right>_{H_{a,b}}=\delta_{n_1,n_2}$. Finally, 
note that $H_{a,b}\subseteq H=\mbox{span}(e_1,e_2,\dots)=
\mbox{span}(e_{1,a,b},e_{2,a,b},\dots)$, which means that
$\{e_{n,a,b}\}_{n\in\NN_0}$ is an orthonormal basis of $H_{a,b}$,
as claimed.

The norm in $H_{a,b}$ can now also be written as $$\|f\|_{H_{a,b}}=\left(\sum_{n=0}^{\infty}|\langle f, e_{n,a,b}\rangle_{H_{a,b}}|^2\right)^{1/2}.$$

We remark that $k(n)=0$ for $n\in\{0,1,\ldots,m_0-1\}$ and therefore,
$e_{n,a,b}=e_n$ and
$$
\|e_{n,a,b}\|_{H_{a,b}}=\|e_n\|_H=1\ \ \ \ \mbox{for all}\ \ n\in
\{0,1,\ldots,m_0-1\}.
$$
The last equality holds for $m_0$ elements, and $m_0\ge1$. This
and \eqref{1} imply
\begin{equation}\label{2}
\sup_{\|f\|_{H_{a,b}}\le 1}\,\|f\|_H=1.
\end{equation}

Similarly as for the space $H_s$, we take the $s$-fold tensor
products of the weighted space~$H_{a_j,b_j}$ with possibly
different $a_j$ and $b_j$ such that
\begin{equation}\label{3}
0<a_1\le a_2\le\cdots\ \ \ \ \mbox{and}\ \ \ \ \ \
\inf_{j\in\NN}b_j>0.
\end{equation}
That is,
$$
H_{s,\bsa,\bsb}=H_{a_1,b_1}\otimes H_{a_2,b_2}\otimes\cdots\otimes
H_{a_s,b_s}.
$$
For $\bsn=[n_1,n_2,\dots,n_s]\in\NN_0^s$, define
$$
e_{\bsn,\bsa,\bsb}=e_{n_1,a_1,b_1}\otimes
e_{n_2,a_2,b_2}\otimes\cdots\otimes e_{n_s,a_s,b_s}.
$$
Then $\{e_{\bsn,\bsa,\bsb}\}_{\bsn\in\NN_0^s}$ is an orthonormal
basis of $H_{s,\bsa,\bsb}$ and $f\in H_{s,\bsa,\bsb}$ iff
$$
f=\sum_{\bsn\in\NN_0^s}\left<f,e_{\bsn,\bsa,\bsb}\right>_{H_{s,\bsa,\bsb}}
\,e_{\bsn,\bsa,\bsb}\ \ \ \ \mbox{with}\ \ \ \
\|f\|_{H_{s,\bsa,\bsb}}:=\left(
\sum_{\bsn\in\NN_0^s}|\left<f,e_{\bsn,\bsa,\bsb}\right>_{H_{s,\bsa,\bsb}}|^2
\right)^{1/2}<\infty.
$$
We now show that
\begin{equation}\label{4}
\|f\|_{H_s}\le\|f\|_{H_{s,\bsa,\bsb}}\ \ \ \ \mbox{for all}\ \
f\in H_{s,\bsa,\bsb}.
\end{equation}
Indeed, for $f\in H_{s,\bsa,\bsb}$ we have
$f=\sum_{\bsn\in\NN_0^s}\alpha_{\bsn}\,e_{\bsn,\bsa,\bsb}$ with
$\|f\|_{H_{s,\bsa,\bsb}}^2=\sum_{\bsn\in\NN_0^s}|\alpha_{\bsn}|^2<\infty$.
For any $n_j\in\NN_0$ there is a unique $k(n_j)\in\NN_0$ such that
$n_j\in \{r_{k(n_j)},r_{k(n_j)}+1,\ldots,r_{k(n_j)+1}-1\}$, and
$e_{n_j,a_j,b_j}=\omega^{a_j[k(n_j)]^{b_j}/2}\,e_{n_j}$. Therefore
\begin{equation}\label{5}
e_{\bsn,\bsa,\bsb}=\left(\prod_{j=1}^s\omega^{a_j[k(n_j)]^{b_j}/2}\right)
\,e_{\bsn}.
\end{equation}
We have $f=\sum_{\bsn\in\NN_0^s}\alpha_{\bsn}\left(\prod_{j=1}^s
\omega^{a_j[k(n_j)]^{b_j}/2}\right)\,e_{\bsn}$ and
$$
\|f\|_{H_s}=\left(\sum_{\bsn\in\NN_0^s}
|\alpha_{\bsn}|^2\prod_{j=1}^s\omega^{a_j[k(n_j)]^{b_j}}\right)^{1/2}
\le \left(\sum_{\bsn\in\NN_0^s}
|\alpha_{\bsn}|^2\right)^{1/2}=\|f\|_{H_{s,\bsa,\bsb}},
$$
as claimed.

For $\bsn\in\{0,1,\ldots,m_0-1\}^s$, we have $k(n_j)=0$ for
$j=1,2,\dots,s$. Therefore $e_{\bsn,\bsa,\bsb}=e_{\bsn}$ and
$$
\|e_{\bsn,\bsa,\bsb}\|_{H_{s,\bsa,\bsb}}=\|e_{\bsn}\|_{H_s}=1\ \ \
\ \mbox{for all}\ \ \bsn\in\{0,1,\ldots,m_0-1\}^s.
$$
The last equality holds for $m_0^s$ elements. This and \eqref{4}
imply
\begin{equation}\label{6}
\sup_{\|f\|_{H_{s,\bsa,\bsb}}\le1}\|f\|_{H_s}=1.
\end{equation}

Note that \eqref{5} implies that $\{e_{\bsn}\}_{\bsn\in \NN_0^s}$
is orthogonal in $H_{s,\bsa,\bsb}$ and
$$
\|e_{\bsn}\|_{H_{s,\bsa,\bsb}}=\prod_{j=1}^s\omega^{-a_j[k(n_j)]^{b_j}/2}
\ \ \ \ \mbox{for all}\ \ \bsn\in\NN_0^s.
$$
For $f\in H_s$ we have
$f=\sum_{\bsn\in\NN_0^s}\left<f,e_{\bsn}\right>_{H_s}e_{\bsn}$
with $
\sum_{\bsn\in\NN_0^s}|\left<f,e_{\bsn}\right>_{H_s}|^2<\infty$.
Such $f$ belongs to $H_{s,\bsa,\bsb}$ iff
$$
\|f\|_{H_{s,\bsa,\bsb}}=\left(\sum_{\bsn\in\NN_0^s}\
\prod_{j=1}^s\omega^{-a_j[k(n_j)]^{b_j}}
|\left<f,e_{\bsn}\right>_{H_s}|^2 \right)^{1/2}<\infty.
$$
As for the univariate case, we see that
$\prod_{j=1}^s\omega^{-a_j[k(n_j)]^{b_j}}$ goes exponentially fast
to infinity if one of the components of $\bsn$ goes to infinity.
Therefore $|\left<f,e_{\bsn}\right>_{H_s}|$ must decay
exponentially fast to zero if one of the components of $\bsn$
approaches infinity.

\begin{rem}
\end{rem}
We stress that the spaces $H$, $H_s$ and  $H_{s,\bsa,\bsb}$ do not
have to be reproducing kernel Hilbert spaces, see \cite{A50} for
general facts on reproducing kernel Hilbert spaces. Indeed, the
initial space $H$ does not have to be a function space. But if $H$
is a Hilbert space of real or complex valued functions defined on,
say, a common domain $D$, then it is well known that $H$ is a
reproducing kernel Hilbert space iff
\begin{equation}\label{7}
\sum_{k=0}^\infty|e_k(x)|^2<\infty\ \ \ \ \mbox{for all}\ \ x\in
D.
\end{equation}
If \eqref{7} holds then
\begin{align}\label{kernel}
K(x,y)=\sum_{k=0}^{\infty}e_k(x)\,\overline{e_k(y)}\ \ \ \ \mbox{for
all}\ \ x,y\in D
\end{align}
is well-defined for all $x,y\in D$, because
\begin{align*}
\vert K(x,y)\vert&=\left\vert\sum_{k=0}^{\infty}e_k(x)\overline{e_k(y)}
\right\vert\leq\left( \sum_{k=0}^{\infty}\vert e_k(x)\vert^2\sum_{k=0}^{\infty}
\vert e_k(y)\vert^2\right)^{1/2}<\infty
\end{align*}
by the Cauchy-Schwarz inequality. Moreover,
$$
f(y)=\left<f,K(\cdot,y)\right>_H\ \ \ \ \mbox{for all}\ \ f\in H \
\mbox{and}\ y\in D.
$$
holds and so \eqref{kernel} is a reproducing kernel of $H$. If \eqref{7} 
holds then $H_s$ is also a reproducing kernel Hilbert
space and its kernel is
$$
K_s(\bsx,\bsy)=\prod_{j=1}^sK(x_j,y_j)= \sum_{\bsk\in\NN_0^s}
e_{\bsk}(\bsx)\,\overline{e_{\bsk}(\bsy)}\ \ \ \ \mbox{for all}\ \
\bsx,\bsy\in D^s.
$$
Similarly, the weighted space $H_{a,b}$ is a reproducing kernel
Hilbert space iff
\begin{equation}\label{8}
\sum_{k=0}^\infty\omega^{ak^b}\,\sum_{j=r_k}^{r_{k+1}-1}|e_j(x)|^2<\infty\
\ \ \ \mbox{for all}\ \ x\in D.
\end{equation}
Clearly, the condition \eqref{8} is weaker than the condition
\eqref{7}. Hence, it may happen that $H$ is not a reproducing
kernel Hilbert space but $H_{a,b}$ is. We shall see examples of
such spaces in a moment.

If \eqref{8} holds then the reproducing kernel of $H_{a,b}$ is
$$
K_{a,b}(x,y)=\sum_{k=0}^{\infty}e_{k,a,b}(x)\overline{e_{k,a,b}(y)}=
\sum_{k=0}^\infty\omega^{ak^b}\,\sum_{j=r_k}^{r_{k+1}-1}e_j(x)
\overline{e_j(y)}\ \ \ \ \mbox{for all}\ \ x,y\in D.
$$
If \eqref{8} holds then $H_{s,\bsa,\bsb}$ is also a reproducing
kernel Hilbert space and its kernel is
$$
K_{s,\bsa,\bsb}(\bsx,\bsy)=\prod_{j=1}^sK_{a_j,b_j}(x_j,y_j)=
\sum_{\bsk\in\NN_0^s} e_{\bsk,\bsa,\bsb}(\bsx)\,
\overline{e_{\bsk,\bsa,\bsb}(\bsy)}\ \ \ \ \mbox{for all}\ \
\bsx,\bsy\in D^s.
$$

\qed

We illustrate the weighted Hilbert spaces $H_{s,\bsa,\bsb}$ by five
examples.
\begin{examp}\ {\bf Weighted $\ell_2$ Space}
\end{examp}
Let $H=\ell_2$ be the space of sequences in $\mathbb{C}$ with
finite quadratic norm, i.e., $H=\ell_2=\{f:\NN_0 \rightarrow
\mathbb{C} \, : \, \sum_{n=0}^{\infty} |f(n)|^2 < \infty\}$. Let
$e_k$ be the $k$-th canonical element $e_k(n)=\delta_{k,n}$.

Then $H_s=\{f:\NN_0^s \rightarrow \mathbb{C} \, : \, \sum_{\bsn
\in \NN_0^s} |f(\bsn)|^2 < \infty\}$. For
$\bsk=[k_1,k_2,\ldots,k_s]\in\NN_0^s$,  let
$e_{\bsk}(\bsn)=\prod_{j=1}^s e_{k_j}(n_j)$ for all
$\bsn=[n_1,n_2,\ldots,n_s]\in\NN_0^s$. The inner product in $H_s$
is $\langle f,g\rangle_{H_s}=\sum_{\bsk\in \NN_0^s} f(\bsk)
g(\bsk)$. 
Note that $\langle
e_{\bsk_1},e_{\bsk_2}\rangle_{H_s}=\delta_{\bsk_1,\bsk_2}$ 
and  
$\sum_{\bsk \in \NN_0^s} |e_{\bsk}(\bsn)|^2
=1$.  
Hence, $H_s$ is a reproducing kernel Hilbert space with
kernel function 
$$K_s(\bsl,\bsn)=\sum_{\bsk \in \NN_0^s}
e_{\bsk}(\bsl)e_{\bsk}(\bsn)=\delta_{\bsl,\bsn}\ \ \ \mbox{ for $\bsl,\bsn \in \NN_0^s$.}$$
For $H_{s,\bsa,\bsb}$, we take $m_k \equiv 1$. 
Then $r_k=k$ and
$k(n)=n$. The inner product of $H_{s,\bsa,\bsb}$ for $f,g\in
H_{s,\bsa,\bsb}$ is given by
$$
\left<f,g\right>_{H_{s,\bsa,\bsb}}=\sum_{\bsk\in\NN_0^s}
\omega^{-\sum_{j=1}^sa_jk_j^{b_j}}\,f(\bsk)\,g(\bsk).
$$
Hence $f \in H_{s,\bsa,\bsb}$ means that the $\abs{f(\bsk)}$ of $f$
decrease exponentially fast. $H_{s,\bsa,\bsb}$ is a reproducing
kernel Hilbert space with kernel
$$K_{s,\bsa,\bsb}(\bsl,\bsn)=\sum_{\bsk \in \NN_0^s}
\omega^{\sum_{j=1}^{s} a_j k_j^{b_j}}
e_{\bsk}(\bsl)e_{\bsk}(\bsn).$$
\qed

\begin{examp}\  {\bf Weighted Hermite Space}
\end{examp}

Let $\rho(x)=(2\pi)^{-1/2}\exp(-x^2/2)$ for all $x\in\RR$ be the Gaussian weight
in the real line and let $\h_k$ be the Hermite polynomial of degree $k$,
$$
\h_k(x)=\frac{(-1)^k}{\sqrt{k!}}\,\exp(x^2/2)\,\frac{{\rm
d}^k}{{\rm
    d}x^k}\, \exp(-x^2/2)\ \ \ \ \mbox{for all}\ \ x\in \RR.
$$
Now we consider the Hilbert space of real functions which are Lebesgue square-integrable
with respect to $\rho$ and have an absolutely convergent Hermite series,
i.e.,
\begin{align*}
H&=\Big\{f:\RR\rightarrow\RR\,:\,f\textnormal{ measurable}, \int_{\RR} \vert f(x)\vert^2\rho(x)\rd x<\infty,\\
&\qquad\qquad\qquad\qquad\qquad\qquad f(x)=\sum_{k\in\NN_0}\widehat{f}_k \h_k(x)\textnormal{ absolutely convergent}\Big\}
\end{align*}
with inner product $\langle f,g\rangle=\int_{\RR}f(x)g(x)\rho(x)\rd x$. 
Since it is known that $\{\h_k\}_{k\in\NN_0}$ is orthonormal, we
can take $e_k=\h_k$. Clearly, $H$ is \emph{not} a reproducing
kernel Hilbert space. Then 
\begin{align*}
H_s&=\Big\{f:\RR^s\rightarrow\RR\,:\,f\textnormal{ measurable}, \int_{\RR^s} \vert f(\bsx)\vert^2\rho_s(\bsx)\rd \bsx<\infty,\\
&\qquad\qquad\qquad\qquad\qquad\qquad f(\bsx)=\sum_{\bsk\in\NN^s_0}\widehat{f}_{\bsk} \h_{\bsk}(\bsx)\textnormal{ absolutely convergent}\Big\}
\end{align*}
with
$$
\rho_s(\bsx)=\frac1{(2\pi)^{s/2}}\,\exp\left(-\frac12\,\sum_{j=1}^sx_j^2\right)
\ \ \ \ \mbox{for all}\ \ \bsx=[x_1,x_2,\dots,x_s]\in \RR^s.
$$
For $\bsk\in\NN_0^s$, we take
$e_{\bsk}(\bsx)=\h_{\bsk}(\bsx)=\prod_{j=1}^s \h_{k_j}(x_j)$ for all
$\bsx\in\RR^s$. Then $\{e_{\bsk}\}_{\bsk\in\NN_0^s}$ is an
orthonormal basis of $H_s$. Obviously, $H_s$ is \emph{not} a
reproducing kernel Hilbert space for any~$s\in\NN$.

The weighted Hermite space $H_{s,\bsa,\bsb}$ is obtained by taking
$m_k\equiv 1$. Then $r_k=k$ and $k(n)=n$.  The inner product of
$H_{s,\bsa,\bsb}$ for $f,g\in H_{s,\bsa,\bsb}$ is given by
$$
\left<f,g\right>_{H_{s,\bsa,\bsb}}=\sum_{\bsk\in\NN_0^s}
\omega^{-\sum_{j=1}^sa_jk_j^{b_j}}\,\widehat{f}_{\bsk}\,\widehat{g}_{\bsk},
$$
where $\widehat{f}_{\bsk}$ and $\widehat{g}_{\bsk}$ denote the
$\bsk$th Hermite coefficients of $f$ and $g$,
$$
\widehat{f}_{\bsk}=\langle f,\h_{\bsk} \rangle_{L_2(\RR^s,\rho_s)}=
\int_{\RR^s}f(\bsx)\,\h_{\bsk}(\bsx)\,\rho_s(\bsx)\,{\rm
  d}\bsx\ \ \ \ \mbox{for all}\ \ \bsk\in \NN_0^s.
$$
The weighted Hermite space $H_{s,\bsa,\bsb}$ is a reproducing
kernel Hilbert space due to Cramer's bound which states that
$$
|\h_k(x)|\le (2\pi)^{1/4}\,\exp(x^2/4)\ \ \ \ 
\mbox{for all}\ \  \ x\in\RR\ \ \mbox{and}\ \ 
k\in\NN_0,
$$ 
see \cite[p. 324]{S77}. Indeed, this bound leads to
$$
\sum_{\bsk\in\NN_0^s}[e_{\bsk,\bsa,\bsb}(x)]^2 = \prod_{j=1}^s
\sum_{k=0}^{\infty}\omega^{a_jk^{b_j}}\,[\h_{k}(x_j)]^2 \le
\prod_{j=1}^s(2\pi)^{1/2}\,\exp(x_j^2/2)\,
\sum_{k=0}^\infty\omega^{a_jk^{b_j}}<\infty
$$
since the series $\sum_{k=0}^\infty\omega^{a_jk^{b_j}}<\infty$ for
all positive $a_j$ and $b_j$. The reproducing kernel of
$H_{s,\bsa,\bsb}$ is
$$
K_{s,\bsa,\bsb}(\bsx,\bsy)=
\sum_{\bsk\in\NN_0^s}\omega^{\sum_{j=1}^sa_jk^{b_j}}\,
\h_{\bsk}(\bsx)\,\h_{\bsk}(\bsy)\ \ \ \ \mbox{for all}\ \
\bsx,\bsy\in\RR^s.
$$
More information on weighted Hermite spaces can be found in \cite{IKLP14,IL}.
\qed

\begin{examp}\label{korobov} \ {\bf Weighted Korobov Space}
\end{examp}
We now take $H$ as the Hilbert space of complex-valued, square-integrable functions on $[0,1]$ with absolutely convergent Fourier series, i.e.,
\begin{align*}
H&=\Big\{f:[0,1]\rightarrow\CC\,:\,f\textnormal{ measurable}, \int_0^1 \vert f(x)\vert^2\rd x<\infty,\\
&\qquad\qquad\qquad\qquad\qquad\qquad f(x)=\sum_{k\in\ZZ}\widehat{f}_k \exp(2\pi \icomp kx)\textnormal{ absolutely convergent}\Big\}
\end{align*}
with inner product $\langle f,g\rangle=\int_0^1f(x)\overline{g(x)}\rd x$. The orthonormal basis
$\{e_k\}_{k\in\NN_0}$ of $H$  is taken as
$$
e_0(x)=1, \ \ \ e_{2k-1}(x)=\exp(2\pi\icomp k\,x),\ \ \
e_{2k}(x)=\exp(-2\pi\icomp k\,x),
$$
for $k\in\NN$ with $\icomp=\sqrt{-1}$. Then 
\begin{align*}
&H_s=\Big\{f:[0,1]^s\rightarrow\CC\,:\,f\textnormal{ measurable}, \int_{[0,1]^s}\vert f(\bsx)\vert^2\rd \bsx<\infty,\\
&\qquad\qquad\qquad\qquad\qquad\qquad f(x)=\sum_{\bsk\in\ZZ}\widehat{f}_{\bsk} \exp(2\pi \icomp \bsk\bsx)\textnormal{ absolutely convergent}\Big\}
\end{align*}
and $\{e_{\bsk}\}_{\bsk\in\NN_0^s}$  with
$$
e_{\bsk}(\bsx)=\prod_{j=1}^se_{k_j}(x_j) \ \ \ \ \mbox{for all}\ \
\bsx\in[0,1]^s
$$
as its orthonormal basis. Then $f\in H_s$ iff
$$
f(\bsx)=\sum_{\bsh\in\ZZ^s}\widehat{f}_{\bsh}\,\exp\left(
2\pi\icomp\, \bsh \cdot \bsx\right)\ \ \ \ \mbox{for all}\ \
\bsx\in[0,1]^s
$$
with $\sum_{\bsh\in\ZZ^s}|\widehat{f}_{\bsh}|^2<\infty$, where
$\bsh \cdot \bsx$ denotes the usual dot product. Here,
$\ZZ$ is the set of all integers,
$\ZZ:=\{\dots,-1,0,1,\dots\}$, and
$$
\widehat{f}_{\bsh}=\langle f,e_{\bsh} \rangle_{L_2}=
\int_{[0,1]^s}f(\bsx)\,\exp(-2\pi\icomp \, \bsh \cdot \bsx)\,{\rm
d}\bsx
$$
is the $\bsh$th Fourier coefficient. Clearly, $H_s$ is \emph{not}
a reproducing kernel Hilbert space for all~$s\in \NN$.

The weighted Korobov space $H_{s,\bsa,\bsb}$ is obtained by taking
$m_0=1$ and $m_k=2$ for all $k\in \NN$. Then $r_0=0$ and
$r_k=2k-1$ for all $k\in\NN$. The inner product of
$H_{s,\bsa,\bsb}$ for $f,g\in H_{s,\bsa,\bsb}$ is given by
$$
\left<f,g\right>_{H_{s,\bsa,\bsb}}=\sum_{\bsh\in\ZZ^s}
\omega^{-\sum_{j=1}^sa_j|h_j|^{b_j}}\, \widehat{f}_{\bsh}
\,\overline{\widehat{g}_{\bsh}}.
$$
The space $H_{s,\bsa,\bsb}$ is a reproducing kernel Hilbert space
and its reproducing kernel is
$$
K_{s,\bsa,\bsb}(\bsx,\bsy)=\sum_{\bsh\in\ZZ^s}
\omega^{\sum_{j=1}^sa_j|h_j|^{b_j}}\, \exp(2\pi\icomp \,\bsh \cdot
(\bsx-\bsy)) \ \ \ \ \mbox{for all}\ \ \bsx,\bsy\in[0,1]^s.
$$
The weighted Korobov space $H_{s,\bsa,\bsb}$ is a space of
periodic functions with period~$1$ for each variable. More 
information on
these spaces can be found in \cite{DKPW13,DLPW11,KPW14,KPW14a}.
\qed

\begin{examp} \ {\bf Weighted Cosine Space}
\end{examp}
We take $H$ as the Hilbert space of real-valued and square integrable
functions defined on $[0,1]$ with absolutely convergent cosine series,
 more precisely,
\begin{align*}
H&=\Big\{f:[0,1]\rightarrow\RR\,:\,f\textnormal{ measurable}, \int_0^1 \vert f(x)\vert^2\rd x<\infty,\\
&\qquad\qquad\qquad\qquad\qquad\qquad f(x)=\widehat{f}_0+\sum_{k\in\NN}\widehat{f}_k \sqrt{2}\cos(\pi kx)\textnormal{ absolutely convergent}\Big\}
\end{align*}
The orthonormal basis $\{e_k\}_{k\in\NN_0}$ of $H$  is then taken as
$$
e_0(x)=1, \mbox{ and } \ \ e_k(x)=\sqrt{2} \cos(\pi k x) \ \mbox{
for }\ k\in\NN.$$

Then we take, as in the previous examples, $H_s=H\otimes\cdots\otimes H$ 
and $\{e_{\bsk}\}_{\bsk\in\NN_0^s}$  with
$$
e_{\bsk}(\bsx)=\prod_{j=1}^se_{k_j}(x_j) \ \ \ \ \mbox{for all}\ \
\bsx\in[0,1]^s
$$
as its orthonormal basis. For $\bsh=[h_1,h_2,\ldots,h_s] \in
\NN_0^s$ we denote by $|\bsh|_0$ the number of indices $j \in
\{1,2,\ldots,s\}$ for which $h_j \not=0$. Then $f\in H_s$ iff
$$
f(\bsx)=\sum_{\bsh\in\NN_0^s}\widetilde{f}_{\bsh}
(\sqrt{2})^{|\bsh|_0}\left(\prod_{j=1}^s \cos(\pi h_j x_j)\right)\
\ \ \ \mbox{for all}\ \ \bsx\in[0,1]^s
$$
with $\sum_{\bsh\in\NN_0^s}|\widetilde{f}_{\bsh}|^2<\infty$. Here
$$
\widetilde{f}_{\bsh}=\langle f,e_{\bsh} \rangle_{L_2}=
\int_{[0,1]^s}f(x) (\sqrt{2})^{|\bsh|_0} \left(\prod_{j=1}^s
\cos(\pi h_j x_j)\right)  \,{\rm d}\bsx
$$
is the $\bsh$th 
cosine coefficient. Clearly, $H_s$ is
\emph{not} a reproducing kernel Hilbert space for all~$s\in \NN$.

The weighted cosine space $H_{s,\bsa,\bsb}$ is obtained by taking
$m_k\equiv 1$. Then $r_k=k$ and $k(n)=n$. The inner product of
$H_{s,\bsa,\bsb}$ for $f,g\in H_{s,\bsa,\bsb}$ is given by
$$
\left<f,g\right>_{H_{s,\bsa,\bsb}}=\sum_{\bsh\in\NN_0^s}
\omega^{-\sum_{j=1}^sa_j|h_j|^{b_j}}\, \widetilde{f}_{\bsh}
\,\widetilde{g}_{\bsh}.
$$
The space $H_{s,\bsa,\bsb}$ is a reproducing kernel Hilbert space
and its reproducing kernel is
$$
K_{s,\bsa,\bsb}(\bsx,\bsy)=\sum_{\bsh\in\NN_0^s}
\omega^{\sum_{j=1}^sa_j|h_j|^{b_j}}\, 2^{|\bsh|_0}
\left(\prod_{j=1}^s \cos(\pi h_j x_j) \cos(\pi h_j y_j)\right) \ \
\ \ \mbox{for all}\ \ \bsx,\bsy\in[0,1]^s.
$$
More information 
on cosine spaces with finite smoothness can be found
in \cite{DNP14}. \qed

\begin{examp} \ {\bf Weighted Walsh Space}
\end{examp}
Let us denote by $\wal_k$ the $k$th Walsh function in some fixed 
integer base $b\ge 2$, see for example 
\cite[Appendix~A]{DP10} for further details.

We now study the Hilbert space of complex-valued, square-integrable 
functions on $[0,1]$ with absolutely convergent Walsh series, i.e.,
\begin{align*}
H&=\Big\{f:[0,1]\rightarrow\CC\,:\,f\textnormal{ measurable}, \int_0^1 \vert f(x)\vert^2\rd x<\infty,\\
&\qquad\qquad\qquad\qquad\qquad\qquad f(x)=\sum_{k\in\NN_0}\widehat{f}_k \wal_k(x)\textnormal{ absolutely convergent}\Big\}.
\end{align*}
For this example the orthonormal basis $\{e_k\}_{k\in\NN_0}$ of $H$
is taken as 
$$
e_k(x)=\wal_k (x)\ \ \ \  \mbox{for all}\ \ k\in\NN_0.
$$
Then $H_s=L_2([0,1]^s)$ 
\begin{align*}
H_s&=\Big\{f:[0,1]^s\rightarrow\RR\,:\,f\textnormal{ measurable}, \int_{[0,1]^s} \vert f(\bsx)\vert^2\rd\bsx<\infty,\\
&\qquad\qquad\qquad\qquad\qquad\qquad f(\bsx)=\sum_{\bsk\in\NN^s_0}\widehat{f}_{\bsk} \wal_{\bsk}(\bsx)\textnormal{ absolutely convergent}\Big\}.
\end{align*}
and $\{e_{\bsk}\}_{\bsk\in\NN_0^s}$  with
$$
e_{\bsk}(\bsx)=\prod_{j=1}^se_{k_j}(x_j)= \wal_{\bsk}(\bsx):=
\prod_{j=1}^s \wal_{k_j}(x_j)
\ \ \ \ \mbox{for all}\ \
\bsx\in[0,1]^s
$$
as its orthonormal basis. Then $f\in H_s$ iff
$$
f(\bsx)=\sum_{\bsh\in\NN_0^s}
\widehat{f}_{\bsh,\wal}\,\wal_{\bsh}(\bsx)\ \ \ \ \mbox{for all}\ \
\bsx\in[0,1]^s
$$
with $\sum_{\bsh\in\NN_0^s}|\widehat{f}_{\bsh,\wal}|^2<\infty$. Here,
$$\widehat{f}_{\bsh,\wal}=\langle f,\wal_{\bsh} \rangle_{L_2}=
\int_{[0,1]^s}f(\bsx)\,\overline{\wal_{\bsh}(\bsx)}\,{\rm d}\bsx
$$
is the $\bsh$th Walsh coefficient. 

The weighted Walsh space $H_{s,\bsa,\bsb}$ is obtained by taking
$m_k\equiv 1$. Then $r_k=k$ and $k(n)=n$. The inner product of
$H_{s,\bsa,\bsb}$ for $f,g\in H_{s,\bsa,\bsb}$ is given by
$$
\left<f,g\right>_{H_{s,\bsa,\bsb}}=\sum_{\bsh\in\NN_0^s}
\omega^{-\sum_{j=1}^sa_j|h_j|^{b_j}}\, \widehat{f}_{\bsh,\wal}
\,\overline{\widehat{g}_{\bsh,\wal}}.
$$
The space $H_{s,\bsa,\bsb}$ is a reproducing kernel Hilbert space
and its reproducing kernel is
$$
K_{s,\bsa,\bsb}(\bsx,\bsy)=\sum_{\bsh\in\NN_0^s}
\omega^{\sum_{j=1}^sa_j|h_j|^{b_j}}\, 
\wal_{\bsh}(\bsx)\,\overline{\wal_{\bsh}(\bsy)} \ \
\ \ \mbox{for all}\ \ \bsx,\bsy\in[0,1]^s.
$$
More information 
on the 
Walsh spaces with finite smoothness can be found
in \cite{DP05,DP10}.\qed

\section{Multivariate Approximation}\label{secapprox}

By multivariate approximation we mean an embedding operator
$\APP_s:H_{s,\bsa,\bsb}\to H_s$ given by
$$
\APP_sf=f\ \ \ \ \mbox{for all} \ \ f\in H_{s,\bsa,\bsb}.
$$
Due to \eqref{6}, the operator $\APP_s$ is well defined, and it is
a continuous linear operator. Furthermore,
$\|\APP_sf\|_{H_s}\le\|f\|_{H_{s,\bsa,\bsb}}$ for all $f\in
  H_{s,\bsa,\bsb}$ and
$$
\|\APP_s\|=1\ \ \ \ \mbox{for all} \ \ s\in\NN.
$$
We will later show that $\APP_s$ is a compact operator.

We want to approximate $\APP_sf$ by algorithms
$A_n:H_{s,\bsa,\bsb}\to H_s$ that use at most $n$ continuous
linear functionals of $f$. Without loss of generality, see e.g.
\cite{NW08,TWW88}, we may restrict ourselves to linear algorithms
of the form
$$
A_nf=\sum_{j=1}^n L_j(f)\,g_j\ \ \ \ \mbox{for all}\ \ f\in
H_{s,\bsa,\bsb}
$$
for some $L_j\in H_{s,\bsa,\bsb}^*$ and $g_j\in H_s$ for
$j=1,2,\dots,n$.

We consider the worst case setting in which the error of $A_n$ is
defined as
$$
e(A_n)=\sup_{\|f\|_{H_{s,\bsa,\bsb}}\le
1}\,\|\APP_sf-A_nf\|_{H_s}= \|\APP_s-A_n\|.
$$
For $n=0$, we have the so-called initial error which is achieved
by the zero algorithm $A_0=0$, and $e(A_0)=\|\APP_s\|=1$.

By the $n$th minimal (worst case) error we mean the minimal error
among all algorithms~$A_n$,
$$
e(n,\APP_s)=\inf_{A_n}\,e(A_n).
$$
Clearly, $e(0,\APP_s)=1$. In a moment 
an algorithm
$A_n^*$ for which the infimum is attained will be presented.

By the information complexity $n(\eps,\APP_s)$ we mean the minimal
$n$ for which we can find an algorithm $A_n$ with error at most
$\eps\in(0,\infty)$,
$$
n(\eps,\APP_s)=\min\{\,n\, :\ e(n,\APP_s)\le \eps\,\}.
$$
Clearly, $n(\eps,\APP_s)=0$ for all $\eps\ge1$, and therefore the
only $\eps$'s of interest are from $(0,1)$.

It is well known, see again e.g., \cite{NW08,TWW88}, that the
$n$th minimal errors $e(n,\APP_s)$ and the information complexity
$n(\eps,\APP_s)$ depend on the eigenvalues of the continuous and
linear operator $W_s=\APP_s^*\APP_s:H_{s,\bsa,\bsb}\to
H_{s,\bsa,\bsb}$. The operator $W_s$ is self-adjoint and in a
moment we shall see that $W_s$ is also compact. Let
$(\lambda_{s,j},\eta_{s,j})$ be the eigenpairs of $W_s$,
$$
W_s\eta_{s,j}=\lambda_{s,j}\,\eta_{s,j}\ \ \ \ \mbox{for all}\  \
j\in\NN,
$$
where the eigenvalues $\lambda_{s,j}$ are ordered,
$$
\lambda_{s,1}\ge \lambda_{s,2}\ge\cdots\ge 0,
$$
and the eigenelements $\eta_{s,j}$ are orthonormal,
$$
\left<\eta_{s,j_1},\eta_{s,j_2}\right>_{H_{s,\bsa,\bsb}}=\delta_{j_1,j_2}
\ \ \ \ \mbox{for all}\ \ j_1,j_2\in\NN.
$$
Then the $n$th minimal error is attained for the algorithm
$$
A^*_nf=\sum_{j=1}^n\left<f,\eta_{s,j}\right>_{H_{s,\bsa,\bsb}}\,\eta_{s,j}
\ \ \ \ \mbox{for all}\ \ f\in H_{s,\bsa,\bsb},
$$
and
$$
e(n,\APP_s)=e(A_n^*)=\sqrt{\lambda_{s,n+1}}\ \ \ \ \mbox{for all}
\ \ n\in \NN_0.
$$
This implies that the information complexity is equal to
\begin{equation}\label{9}
n(\eps,\APP_s)=\min\{\,n\in\NN_0\,:\ \lambda_{s,n+1}\le
\eps^2\,\}.
\end{equation}

We now find the eigenpairs of $W_s$. Using the notation and
results of the previous section, we know that
$\{e_{\bsn,\bsa,\bsb}\}_{\bsn\in\NN_0^s}$ is an orthonormal basis
of $H_{s,\bsa,\bsb}$. We prove that
$$
W_se_{\bsn,\bsa,\bsb}=\omega^{\sum_{j=1}^sa_j[k(n_j)]^{b_j}}\,e_{\bsn,\bsa,\bsb}
\ \ \ \ \mbox{for all}\ \ \bsn\in\NN_0^s.
$$
Indeed, for $f,g\in H_{s,\bsa,\bsb}$ we have
$$
\left<\APP_sf,\APP_sg\right>_{H_s}=\left<f,
  \APP_s^*\APP_sg\right>_{H_{s,\bsa,\bsb}}=
\left<f,W_sg\right>_{H_{s,\bsa,\bsb}}.
$$
Taking $f=e_{\bsn_1,\bsa,\bsb}$ and $g=e_{\bsn_2,\bsa,\bsb}$ for
arbitrary $\bsn_1,\bsn_2\in\NN_0^s$ we obtain from \eqref{5},
\begin{eqnarray*}
\left<e_{\bsn_1,\bsa,\bsb},W_se_{\bsn_2,\bsa,\bsb}\right>_{H_{s,\bsa,\bsb}}
&=&\left<e_{\bsn_1,\bsa,\bsb},e_{\bsn_2,\bsa,\bsb}\right>_{H_s} \\
&=&\left(\prod_{j=1}^s\omega^{a_j[k((n_1)_j)]^{b_j}/2}\,
\omega^{a_j[k((n_2)_j)]^{b_j}/2}\right)\,\left<e_{\bsn_1},e_{\bsn_2}
\right>_{H_s}\\
&=&
\omega^{\sum_{j=1}^sa_j[k((n_1)_j)]^{b_j}/2+a_j[k((n_2)_j)]^{b_j}/2}\,
\delta_{\bsn_1,\bsn_2}.
\end{eqnarray*}
Hence,
$$
\left<e_{\bsn_1,\bsa,\bsb},W_se_{\bsn_2,\bsa,\bsb}\right>_{H_{s,\bsa,\bsb}}=0
\ \ \ \ \mbox{for all} \ \ \bsn_1\not=\bsn_2,
$$
and
$$
\left<e_{\bsn,\bsa,\bsb},W_se_{\bsn,\bsa,\bsb}\right>_{H_{s,\bsa,\bsb}}=
\omega^{\sum_{j=1}^sa_j[k(n_j)]^{b_j}}.
$$
This means that
$$
W_se_{\bsn,\bsa,\bsb}=\sum_{\bsn_1\in\NN_0^s}\left<W_se_{\bsn,\bsa,\bsb},
e_{\bsn_1,\bsa,\bsb}\right>_{H_{s,\bsa,\bsb}}\,e_{\bsn_1,\bsa,\bsb}=
\omega^{\sum_{j=1}^sa_j[k(n_j)]^{b_j}}\, e_{\bsn,\bsa,\bsb},
$$
as claimed. Hence,
$$
\left(\omega^{\sum_{j=1}^sa_j[k(n_j)]^{b_j}}, e_{\bsn,\bsa,\bsb}
\right)_{\bsn\in\NN_0^s}
$$
are the eigenpairs of $W_s$.

As an example consider the weighted Hermite space or the weighted
cosine space for which $m_k\equiv 1$. Then $k(n_j)=n_j$ and the
eigenpairs are of the form
$$
\left(\omega^{\sum_{j=1}^sa_jn_j^{b_j}},
\omega^{\sum_{j=1}^sa_jn_j^{b_j}/2}\,e_{\bsn}\right)_{\bsn\in\NN_0^s}.
$$
For the weighted Korobov space, we have $m_0=1$ and $m_k=2$ for
all $k\in\NN$. Then $k(n_j)=\lceil n_j/2\rceil$ and the eigenpairs
are of the form
$$
\left(\omega^{\sum_{j=1}^sa_j\lceil n_j/2\rceil^{b_j}},
\omega^{\sum_{j=1}^sa_j\lceil n_j/2\rceil^{b_j}/2}\,
e_{\bsn}\right)_{\bsn\in\NN_0^s}.
$$

We turn to the general case. The eigenvalues of $W_s$ may be
multiple. Indeed, for $n_j\in\NN_0$ we obtain the same $k(n_j)$
for all $n_j\in
\{r_{k(n_j)},r_{k(n_j)}+1,\ldots,r_{k(n_j)+1}-1\}$, i.e., for
$r_{k(n_j)+1}-r_{k(n_j)}=m_{k(n_j)}$ different values of $n_j$.
This means that $W_s$ has the eigenvalues
$$
\omega^{\sum_{j=1}^sa_jk_j^{b_j}} \ \ \mbox{of multiplicity}\ \
m_{\bsk}:=\sum_{\bsl\in\cB_{\bsk}}m_{l_1}m_{l_2}\cdots m_{l_s},
$$
where $\cB_{\bsk}=\{\bsl\in\NN_0^s\,:\,\sum_{j=1}^{s}a_jl_j^{b_j}=\sum_{j=1}^{s}a_jk_j^{b_j}\}$.
In particular, the largest eigenvalue $\lambda_{s,1}=1$, obtained
for $k_j=0$ for all $j=1,2,\dots,s$, has multiplicity $m_0^s$.
So for $m_0=1$ the largest eigenvalue is single.

Clearly, the sequence of ordered eigenvalues
$\{\lambda_{s,j}\}_{j\in\NN}$ is the same as the sequence
$\{\omega^{\sum_{j=1}^sa_j[k(n_j)]^{b_j}}\}_{\bsn\in\NN_0^s}$.
Furthermore it is obvious that $\lim_{j\to\infty}\lambda_{s,j}=0$,
which implies that $\APP_s$ as well as $W_s$ are compact.

We now find a more convenient formula for the information
complexity $n(\eps,\APP_s)$. {}From \eqref{9} we conclude that for
$\eps\in(0,\infty)$ we have
$$
n(\eps,\APP_s)=|\{\,j\in\NN_0\,:\ \lambda_{s,j}>\eps^2\,\}|,
$$
or equivalently
$$
n(\eps,\APP_s)=|\{\,j\in\NN_0\,:\
\log\,\lambda_{s,j}^{-1}<\log\,\eps^{-2}\,\}|.
$$
All eigenvalues $\lambda_{s,j}$ are of the form
$\omega^{\sum_{j=1}^sa_jk_j^{b_j}}$ with multiplicity $m_{\bsk}$
for $\bsk\in\NN_0^s$. Therefore
$$
\log\,\omega^{-\sum_{j=1}^sa_jk_j^{b_j}}
=\left(\sum_{j=1}^sa_jk_j^{b_j}\right)\, \log\,\omega^{-1}
$$
and
$$
\log\,\omega^{-\sum_{j=1}^sa_jk_j^{b_j}}<\log\,\eps^{-2}\ \ \
\mbox{iff}\ \ \ \sum_{j=1}^sa_jk_j^{b_j} <
\frac{\log\,\eps^{-2}}{\log\,\omega^{-1}}.
$$
Let
$$
A(\eps,s)=\left\{\,\bsk\in\NN_0^s\,:\ \sum_{j=1}^sa_jk_j^{b_j} <
\frac{\log\,\eps^{-2}}{\log\,\omega^{-1}}\,\right\}.
$$
Then
\begin{equation}\label{10}
n(\eps,\APP_s)=\sum_{\bsk\in A(\eps,s)}m_{k_1}m_{k_2}\cdots
m_{k_s}.
\end{equation}
Note that for $m_k\equiv 1$, as e.g. for the weighted Hermite space and
the weighted cosine space, we have
$$
n(\eps,\APP_s)=|A(\eps,s)|.
$$

For the general case, the set $A(\eps,s)$ is empty for $\eps\ge1$,
and then $n(\eps,\APP_s)=0$ as we already remarked. Let
\begin{equation}\label{functionx}
x(t)=\frac{\log\,t^{-2}}{\log\,\omega^{-1}}\ \ \ \ \mbox{for all}\
\ t\in(0,\infty).
\end{equation}
For $s=1$, it is easy to check that $A(\eps,1)=\{0,1,\dots,\lceil
(x(\eps)/a_1)^{1/b_1}\rceil -1\}$ and
\begin{equation}\label{s=1}
n(\eps,\APP_1)=m_0+m_1+\dots+m_{\lceil
(x(\eps)/a_1)^{1/b_1}\rceil-1}.
\end{equation}
For $s\ge2$, we have
\begin{eqnarray*}
A(\eps,s)&=&\bigcup_{k=0}^\infty\left\{\,\bsk\in\NN_0^{s-1}\times\{k\}\,:\
\sum_{j=1}^{s-1}a_jk_j^{b_j}<x(\eps)-a_sk^{b_s}\,\right\}\\
&=& \bigcup_{k=0}^{\lceil (x(\eps)/a_s)^{1/b_s}\rceil -1}
\left\{\,\bsk\in \NN_0^{s-1}\times\{k\}\,:\
\sum_{j=1}^{s-1}a_jk_j^{b_j}<x(\eps)-a_sk^{b_s}\,\right\}.
\end{eqnarray*}
Since $x(\eps)-a_sk^{b_s}=(\log\,
(\eps\,\omega^{-a_sk^{b_s}/2})^{-2})/\log\,\omega^{-1}$, we
obtain from \eqref{10}
\begin{equation}\label{12}
n(\eps,\APP_s)=\sum_{k=0}^{\lceil(x(\eps)/a_s)^{1/b_s}\rceil -1}
m_k\,n\left( \eps\,\omega^{-a_s{k}^{b_s}/2} ,\APP_{s-1}\right).
\end{equation}
For $\eps_1\le\eps_2$ we have $n(\eps_2,\APP_s)\le
n(\eps_1,\APP_s)$. Since  $\eps\le \eps\,\omega^{-a_s{k}^{b_s}/2}$
for all $k\in\NN_0$, we conclude that
\begin{equation}\label{rec}
n(\eps,\APP_s)\le\left(\sum_{k=0}^{\lceil(x(\eps)/a_s)^{1/b_s}\rceil
    -1}\,m_k\right)\,n(\eps,\APP_{s-1})\ \ \ \ \mbox{for all}\ \ s\ge2.
\end{equation}
We obtain a lower bound on $n(\eps,\APP_s)$ if we consider only
the term $k=0$ in \eqref{12}. Then
$$
n(\eps,\APP_s)\ge m_0\,n(\eps,\APP_{s-1})\ \ \ \ \mbox{for all}\ \
s\ge2.
$$
For $\eps\ge\omega^{a_s/2}$ we have
$$
n(\eps,\APP_s)=m_0\,n(\eps,\APP_{s-1})\ \ \ \ \mbox{for all}\ \
s\ge2
$$
since $\eps\,\omega^{-a_s k^{b_s}/2}\ge1$ for all positive $k$ and
the terms in \eqref{12} for $k>0$ are zero.

For $x(\eps)>a_1$, define
\begin{equation}\label{13}
j(\eps)=\sup\{\,j\in \NN\,:\ x(\eps)>a_j\,\}.
\end{equation}
Obviously, $j(\eps)\ge1$. For $\lim_ja_j<\infty$ (here and in the following we use the convention that we write $\lim_j$ instead of $\lim_{j\to\infty}$), we have
$j(\eps)=\infty$ for small $\eps$. On the other hand, if
$\lim_ja_j=\infty$ we can replace the supremum in the definition
of $j(\eps)$ by the maximum and $j(\eps)$ is finite for all $\eps$
with $x(\eps)>a_1$. However, $j(\eps)$ tends to infinity as $\eps$
tends to zero.

If $j(\eps)$ is finite then
$$
n(\eps,\APP_s)=m_0^{s-j(\eps)}\,n(\eps,\APP_{j(\eps)})\ \ \ \
\mbox{for all}\ \ s\ge j(\eps).
$$
Indeed, for $j\in(j(\eps),s]$ we have $x(\eps)\le a_j$ and
$x(\eps)-a_jk^{b_j}\le0$ for all $k\ge1$. This implies that
$\eps\,\omega^{-a_jk^{b_j}/2}\ge1$ for all $k\ge1$, and the sum in
\eqref{12} reduces to one term for $k=0$. Hence,
$n(\eps,\APP_s)=m_0\,n(\eps,\APP_{s-1})=
\cdots=m_0^{s-j(\eps)}n(\eps,\APP_{j(\eps)})$, as claimed.
Therefore, if $j(\eps)<\infty$ and $m_0=1$ then $n(\eps,\APP_s)$
is independent of $s$ for large $s$, and
$$
\lim_{s\to\infty}\frac{\log\,n(\eps,\APP_s)}{s}=0.
$$
Recall that we assume that the sequence $\bsm=\{m_k\}_{k\in\NN_0}$
of multiplicities is bounded. That is,
$$
\mmax=\max_{k\in\NN}\ m_k
$$
is well defined and $\mmax<\infty$. We also set
$$
\mmin=\min_{k\in\NN}\ m_k.
$$
Clearly, $\mmin\ge1$.

We are ready to prove the following lemma.
\begin{lem}\label{lem1}

\qquad

Let $x(\eps)$, $j(\eps)$, $\mmax$ and $\mmin$ be defined as above.
\begin{enumerate}[(i)]
\item\label{lem1p1} For $\eps\in(0,1)$ we have
$$
n(\eps,\APP_s)\ge m_0^s,
$$
whereas for $\eps\in(0,1)$ and $x(\eps)\le a_1$ we have
$$
n(\eps,\APP_s)= m_0^s.
$$
\item\label{lem1p2} For $x(\eps)>a_1+a_2+\dots+ a_s$ we have
$$
n(\eps,\APP_s)\ge (m_0+m_1)^s.
$$
\item\label{lem1p3} For $x(\eps)>a_1$ and $\eps\in(0,1)$ we have
$$
n(\eps,\APP_s)\le m_0^s\,\prod_{j=1}^{\min(s,j(\eps))}\left(1+
\frac{\mmax}{m_0}\,\left(\left\lceil
\left(\frac{x(\eps)}{a_j}\right)^{1/b_j}\right\rceil-1\right)\right).
$$
\item\label{lem1p4} For $x(\eps)>a_1$, $\eps\in(0,1)$, and arbitrary
  $\alpha_j\in[0,1]$ we have
$$
n(\eps,\APP_s)\ge m_0^s\,\prod_{j=1}^{\min(s,j(\eps))}\left(1+
\frac{\mmin}{m_0}\,\left(\left\lceil
\left(\frac{x(\eps)}{a_j}(1-\alpha_j)\prod_{k=j+1}^s\alpha_k
\right)^{1/b_j}\right\rceil-1\right)\right).
$$
In particular, for $\alpha_j=(j-1)/j$ we have
$$
n(\eps,\APP_s)\ge m_0^s\,\prod_{j=1}^{\min(s,j(\eps))}\left(1+
\frac{\mmin}{m_0}\,\left(\left\lceil
\left(\frac{x(\eps)}{a_j\,s}\right)^{1/b_j}\right\rceil-1\right)\right).
$$
\end{enumerate}
\end{lem}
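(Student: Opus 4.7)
The plan is to work from the explicit formula~\eqref{10}, which expresses
$$n(\eps,\APP_s)=\sum_{\bsk\in A(\eps,s)} m_{k_1}\cdots m_{k_s},\qquad A(\eps,s)=\bigl\{\bsk\in\NN_0^s:\textstyle\sum_{j=1}^s a_j k_j^{b_j}<x(\eps)\bigr\},$$
and to derive each of the four bounds by sandwiching $A(\eps,s)$ between Cartesian products for which the weighted count factorizes.

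Part (i) is immediate: since $\eps\in(0,1)$ gives $x(\eps)>0$, the point $\bszero\in A(\eps,s)$ contributes the factor $m_0^s$, giving the lower bound. When additionally $x(\eps)\le a_1\le a_j$ for every $j$, any nonzero $\bsk\in A(\eps,s)$ would satisfy $\sum_j a_j k_j^{b_j}\ge a_1\ge x(\eps)$, contradicting the strict inequality; hence $A(\eps,s)=\{\bszero\}$ and the bound is tight. For part (ii) the hypothesis $x(\eps)>a_1+\cdots+a_s$ implies that every $\bsk\in\{0,1\}^s$ satisfies $\sum_j a_j k_j^{b_j}\le\sum_j a_j<x(\eps)$, so $\{0,1\}^s\subseteq A(\eps,s)$; summing $m_{k_j}\in\{m_0,m_1\}$ over this product factorizes as $(m_0+m_1)^s$.

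For part (iii), I enlarge $A(\eps,s)$ to a product: the constraint $k_j<(x(\eps)/a_j)^{1/b_j}$ forces $k_j\le K_j:=\lceil(x(\eps)/a_j)^{1/b_j}\rceil-1$, and for $j>j(\eps)$ the inequality $x(\eps)\le a_j$ collapses this to $K_j=0$. Consequently
$$n(\eps,\APP_s)\le \prod_{j=1}^{s}\sum_{k=0}^{K_j} m_k \;=\; m_0^{\,s-\min(s,j(\eps))}\prod_{j=1}^{\min(s,j(\eps))}\sum_{k=0}^{K_j} m_k,$$
and bounding $\sum_{k=0}^{K_j}m_k\le m_0+K_j\mmax= m_0\bigl(1+(\mmax/m_0)K_j\bigr)$ yields the claim.

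For part (iv) the dual idea is to embed a Cartesian product \emph{inside} $A(\eps,s)$ via a budget allocation. Set $\gamma_j:=x(\eps)(1-\alpha_j)\prod_{k=j+1}^s\alpha_k$; a telescoping computation gives $\sum_{j=1}^s\gamma_j=x(\eps)\bigl(1-\prod_{j=1}^s\alpha_j\bigr)\le x(\eps)$, so the total budget fits within $x(\eps)$. Taking $K_j':=\lceil(\gamma_j/a_j)^{1/b_j}\rceil-1$, the ceiling ensures $a_j(K_j')^{b_j}<\gamma_j$, so $\prod_{j=1}^s\{0,\dots,K_j'\}\subseteq A(\eps,s)$; and for $j>j(\eps)$ one has $K_j'=0$. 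Lower bounding $\sum_{k=0}^{K_j'}m_k\ge m_0+K_j'\mmin$ produces the general formula; specializing $\alpha_j=(j-1)/j$ telescopes $\prod_{k=j+1}^s\alpha_k$ to $j/s$, so $\gamma_j=x(\eps)/s$, and the displayed corollary follows. The main technical point is the telescoping identity underlying the $\gamma_j$-allocation in part (iv): without it one cannot simultaneously afford a nonzero budget in every active dimension while keeping the total below $x(\eps)$; the remaining work is bookkeeping around the threshold $j(\eps)$ that separates coordinates in which positive $k_j$ is admissible from those for which only $k_j=0$ survives.
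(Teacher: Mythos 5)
Your proof is correct, and for parts \eqref{lem1p3} and \eqref{lem1p4} it takes a genuinely different (though closely related) route from the paper. Parts \eqref{lem1p1} and \eqref{lem1p2} coincide with the paper's argument. For the remaining parts the paper works inductively on $s$ through the recursion $n(\eps,\APP_s)=\sum_{k=0}^{\lceil(x(\eps)/a_s)^{1/b_s}\rceil-1}m_k\,n\bigl(\eps\,\omega^{-a_sk^{b_s}/2},\APP_{s-1}\bigr)$: the upper bound follows by monotonicity of $n(\cdot,\APP_{s-1})$, and the lower bound by retaining only the indices $k$ with $x(\eps)-a_sk^{b_s}>\alpha_s x(\eps)$ and invoking the induction hypothesis at $\eps^{\alpha_s}$, using $x(\eps^{\alpha_s})=\alpha_s x(\eps)$. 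You instead sandwich $A(\eps,s)$ directly between Cartesian boxes, which makes the weighted count factorize without induction; in particular your telescoping identity $\sum_{j=1}^s(1-\alpha_j)\prod_{k=j+1}^s\alpha_k=1-\prod_{j=1}^s\alpha_j$ exposes explicitly the budget split that the paper's $\eps\mapsto\eps^{\alpha_s}$ substitution performs one coordinate at a time. Your version is arguably more transparent about why the $\alpha_j$ enter; the paper's recursion \eqref{12} is reused elsewhere, which is what that formulation buys. One small caveat, which affects the paper's proof equally: for degenerate choices with some $\gamma_j=0$ (e.g.\ $\alpha_j=1$, or some later $\alpha_k=0$) one gets $K_j'=-1$ and the inner box is empty, so the factorization argument needs the remark that the corresponding factor $1-\mmin/m_0$ only weakens the claimed bound; your displayed special case $\alpha_j=(j-1)/j$ has $\gamma_j=x(\eps)/s>0$ for every $j$ and is unaffected.
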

\begin{proof}
To prove \eqref{lem1p1}, observe that for $\eps\in(0,1)$
 the set $A(\eps,s)$ 
is nonempty since
$\bsk={\bf 0}\in A(\eps,s)$. Therefore \eqref{10} yields
$n(\eps,\APP_s)\ge m_0^s$. Furthermore, for $x(\eps)\le a_1$ the
set $A(\eps,s)=\{{\bf 0}\}$ and therefore $n(\eps,\APP_s)=m_0^s$, as
claimed.

To prove \eqref{lem1p2}, observe that all $\bsk\in\{0,1\}^s$
belong to the set $A(\eps,s)$. Therefore
$$
n(\eps,\APP_s)\ge \sum_{k_1,k_2,\dots,k_s=0}^1m_{k_1}m_{k_2}\cdots
m_{k_s}=(m_0+m_1)^s,
$$
as claimed.

To prove \eqref{lem1p3}, we first take $s=1$. Then \eqref{s=1}
yields
$$
n(\eps,\APP_1)\le m_0+\mmax\,\left(\left\lceil
    \left(\frac{x(\eps)}{a_1}\right)^{1/b_1}\right\rceil -1\right)=
m_0\left(1+\frac{\mmax}{m_0}\,\left(\left\lceil
    \left(\frac{x(\eps)}{a_1}\right)^{1/b_1}\right\rceil -1\right)\right),
$$
as needed. For $s\ge2$ we use \eqref{rec} and obtain
$$
n(\eps,\APP_s)\le m_0\left(1+\frac{\mmax}{m_0}\, \left(\left\lceil
\left(\frac{x(\eps)}{a_s}\right)^{1/b_s}\right\rceil-1\right)\right)\,
n(\eps,\APP_{s-1}).
$$
This implies that
$$
n(\eps,\APP_s)\le m_0^s\,\prod_{j=1}^{s}\left(1+
\frac{\mmax}{m_0}\,\left(\left\lceil
\left(\frac{x(\eps)}{a_j}\right)^{1/b_j}\right\rceil-1\right)\right).
$$
Note that for $j(\eps)< s$ we have $x(\eps)\le a_j$ for all $j\in
[j(\eps)+1,s]$ and therefore
$$
1+\frac{\mmax}{m_0}\,\left(\left\lceil
\left(\frac{x(\eps)}{a_j}\right)^{1/b_j}\right\rceil-1\right)=1.
$$
This means that we can restrict the product to $j$ up to
$j(\eps)$. This completes the proof of \eqref{lem1p3}.

To prove \eqref{lem1p4}, it is enough to prove that
$$
n(\eps,\APP_s)\ge m_0^s\,\prod_{j=1}^{s}\left(1+
\frac{\mmin}{m_0}\,\left(\left\lceil
\left(\frac{x(\eps)}{a_j}(1-\alpha_j)\prod_{k=j+1}^s\alpha_k
\right)^{1/b_j}\right\rceil-1\right)\right)
$$
since for $j\in(j(\eps),s]$ we have $x(\eps)\le a_j$ and the
corresponding factors are one.

Take first $s=1$. Then \eqref{s=1} yields
\begin{eqnarray*}
n(\eps,\APP_1)&\ge& m_0\left(1+\frac{\mmin}{m_0}\left(\left\lceil
\left(\frac{x(\eps)}{a_1}\right)^{1/b_1}\right\rceil -1 \right)\right)\\
&\ge& m_0\left(1+\frac{\mmin}{m_0}\left(\left\lceil
\left(\frac{x(\eps)}{a_1}(1-\alpha_1) \right)^{1/b_1}\right\rceil
-1 \right)\right),
\end{eqnarray*}
as needed. For $s\ge2$, note that $x(\eps)-a_sk^{b_s}>\alpha_s
x(\eps)$ for all $k\in\NN$ for which
$$
k\le \left\lceil
  \left(\frac{x(\eps)(1-\alpha_s)}{a_s}\right)^{1/b_s}\right\rceil -1.
$$
For such $k$ we have $\eps\omega^{-a_sk^{b_s}/2}<\eps^{\alpha_s}$
and therefore from \eqref{12} we obtain
\begin{eqnarray*}
n(\eps,\APP_s)&\ge&
\sum_{k=0}^{\lceil(x(\eps)(1-\alpha_s)/a_s)^{1/b_s}\rceil -1}m_k\,
n(\eps^{\alpha_s},\APP_{s-1})\\
&\ge&m_0\left(1+\frac{\mmin}{m_0}\,\left( \left\lceil
  \left(\frac{x(\eps)(1-\alpha_s)}{a_s}\right)^{1/b_s}\right\rceil
-1\right)\right)\, n(\eps^{\alpha_s},\APP_{s-1}).
\end{eqnarray*}
Since $x(\eps^{\alpha_s})=\alpha_s x(\eps)$, the proof is
completed by applying induction on $s$. For $\alpha_j=(j-1)/j$ we
have $(1-\alpha_j)\prod_{k=j+1}^s\alpha_k=1/s$, which completes
the proof.
\end{proof}

\section{Exponential Convergence}\label{secexp}
As in \cite{DKPW13,DLPW11,IKLP14,KPW14}, by exponential
convergence (EXP) we mean that the $n$th minimal errors
$e(n,\APP_s)$ are bounded by
$$
e(n,\APP_s)\le C_s\,q^{(n/M_s)^{p_s}}\ \ \ \ \mbox{for all}\ \
n\in\NN,
$$
for some positive $C_s,M_s$ and $p_s$ with $q\in(0,1)$. The
supremum of $p_s$ for which the last bound holds is denoted by
$p_s^*$ and is called the exponent of EXP for the $s$-variate
case. We also have the concept of uniform exponential convergence
(UEXP)  if we can take $p_s=p>0$ for all $s\in\NN$. Then the
supremum of such $p$ is denoted by $p^*$ and is called the
exponent of UEXP.

We want to verify when EXP and UEXP hold for the approximation
problem $\APP=\{\APP_s\}_{s\in\NN}$ in terms of the varying
parameters $\bsa=\{a_s\}_{s\in\NN}$ and $\bsb=\{b_s\}_{s\in\NN}$,
which define the domain spaces $H_{s,\bsa,\bsb}$ of $\APP_s$ and
satisfy \eqref{3}.

\begin{thm}\label{thm1}

\qquad

Consider the approximation problem $\APP=\{\APP_s\}_{s\in \NN}$
with the embedding operators $\APP_s:H_{s,\bsa,\bsb}\to H_s$. Then
\begin{enumerate}[(i)]
\item\label{th1exp} EXP holds for arbitrary $\bsa$ and $\bsb$ with the exponent
$$
p^*_s=\frac1{B_s}\ \ \ \ \mbox{and}\ \ \ \
B_s:=\sum_{j=1}^s\frac1{b_j}.
$$
\item\label{th1uexp} UEXP holds iff $\bsa$ is arbitrary and $\bsb$ is such that
$$
B:=\sum_{j=1}^\infty\frac1{b_j}<\infty.
$$
If $B<\infty$ then the exponent of UEXP is $p^*=1/B$.
\end{enumerate}
\end{thm}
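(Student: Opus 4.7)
The plan is to translate the statement into bounds on the information complexity $n(\eps,\APP_s)$. A short calculation shows that the EXP bound $e(n,\APP_s)\le C_s q^{(n/M_s)^{p_s}}$ for all $n$ is equivalent to
$$
n(\eps,\APP_s)\;\le\;\widetilde{C}_s\,(1+\log\eps^{-1})^{1/p_s}\qquad\text{for all } \eps\in(0,1),
$$
after adjusting the constants accordingly. Consequently
$$
\frac{1}{p_s^*}\;=\;\limsup_{\eps\to 0^+}\;\frac{\log n(\eps,\APP_s)}{\log\log \eps^{-1}},
$$
and UEXP with exponent $p$ asks that the same value $1/p$ works uniformly in~$s$ (with $\widetilde{C}_s$ still allowed to depend on~$s$).

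For part~\eqref{th1exp} I would apply Lemma~\ref{lem1}\eqref{lem1p3} for the upper bound and Lemma~\ref{lem1}\eqref{lem1p4} with $\alpha_j=(j-1)/j$ for the lower bound. Because $a_1>0$ and the $a_j$'s are nondecreasing, for every sufficiently small $\eps$ one has $\min(s,j(\eps))=s$: either $j(\eps)=\infty$ (when $\lim_j a_j<\infty$) or $j(\eps)\to\infty$ as $\eps\to 0$ (when $\lim_j a_j=\infty$). For such $\eps$ the two bounds give
$$
c_s\,x(\eps)^{B_s}\;\le\;n(\eps,\APP_s)\;\le\;C_s\,x(\eps)^{B_s},
$$
with positive constants $c_s,C_s$ depending on $\bsa,\bsb,\bsm,\omega$. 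Since $x(\eps)$ is of the order of $\log\eps^{-1}$ by~\eqref{functionx}, the limsup above equals exactly $B_s$, yielding $p_s^*=1/B_s$.

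For part~\eqref{th1uexp} I would observe that $p_s^*=1/B_s$ is non-increasing in $s$ and tends to $1/B$, with $1/B=0$ precisely when $B=\infty$. Since UEXP with exponent $p>0$ forces $p\le p_s^*$ for every~$s$, we must have $p\le 1/B$; this excludes any positive $p$ when $B=\infty$. When $B<\infty$, the upper bound from Lemma~\ref{lem1}\eqref{lem1p3} gives $n(\eps,\APP_s)\le C_s x(\eps)^{B_s}\le C_s x(\eps)^{B}$ (using $x(\eps)\ge 1$ for all sufficiently small $\eps$), which yields UEXP with exponent $1/B$; combined with the matching lower bound on each $p_s^*$, this forces $p^*=1/B$ exactly.

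The main obstacle is ensuring that, for all sufficiently small $\eps$, each ceiling appearing in Lemma~\ref{lem1}\eqref{lem1p4} is at least~$2$, so that the corresponding factor $1+(\mmin/m_0)(\lceil\cdot\rceil-1)$ scales like a positive power of $x(\eps)$ rather than collapsing to~$1$. Once this threshold is identified (it is enough that $x(\eps)\ge a_s s$), the product over $j$ telescopes into the exponent $B_s=\sum_{j=1}^s b_j^{-1}$, and both parts of the theorem follow from comparing the two-sided bounds on $n(\eps,\APP_s)$.
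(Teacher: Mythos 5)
Your proposal is correct and follows essentially the same route as the paper: both derive the two-sided bound $c_s[x(\eps)]^{B_s}\le n(\eps,\APP_s)\le C_s[x(\eps)]^{B_s}$ from Lemma~\ref{lem1}\eqref{lem1p3} and \eqref{lem1p4}, translate this into $n(\eps,\APP_s)=\Theta([\log\eps^{-1}]^{B_s})$, and read off $p_s^*=1/B_s$ and the UEXP characterization $p^*=1/B$. Your explicit attention to when the ceilings in the lower bound exceed $1$ and to $\min(s,j(\eps))=s$ for small $\eps$ is a detail the paper leaves implicit, but it is the same argument.
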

\begin{proof}
From \eqref{lem1p3} and \eqref{lem1p4} of Lemma~\ref{lem1} with a
fixed $s$ we conclude that there are positive numbers $c_1(s)$ and
$c_2(s)$ such that
$$
c_1(s)[x(\eps)]^{B_s}\le n(\eps,\APP_s)\le c_2(s)\,[x(\eps)]^{B_s}
\ \ \ \ \mbox{for all}\ \ \eps\in(0,1).
$$
Clearly, $x(\eps)=\Theta(\log\,\eps^{-1})$. Therefore
$$
n(\eps,\APP_s)=\Theta\left([\log\,\e^{-1}]^{B_s}\right).
$$
From this it follows that we can
find positive $c_j(s)$ for $j=3,4,5,6$ such that
$$
c_3(s)\,{\rm e}^{-(n/c_4(s))^{1/B_s}}\le e(n,\APP_s)\le
c_5(s)\,{\rm e}^{-(n/c_6(s))^{1/B_s}}\ \ \ \ \mbox{for all}\ \
n\in\NN,
$$
where ${\rm e}=\exp(1)$. This proves EXP with $p^*_s=1/B_s$, as
claimed.

We now turn to UEXP. Suppose that UEXP holds. Then $e(n,\APP_s)\le
C_s\,q^{(n/M_s)^p}$. This implies that
$$
n(\eps,\APP_s)=\Theta \left([\log\,\e^{-1}]^{1/p}\right).
$$
Thus, $B_s\le 1/p$  for all $s\in\NN$. Therefore $B\le 1/p<\infty$
and $p^*\le 1/B$. On the other hand, if $B<\infty$ then we can set
$p_s=1/B$ and obtain UEXP. Hence, $p^*\ge 1/B$, and therefore
$p^*=1/B$. This completes the proof.
\end{proof}
We stress that EXP and UEXP hold for arbitrary sequences $\bsm$ of
multiplicity and the only condition is on $\bsb$ for UEXP. This is
true since the concepts of EXP and UEXP do not specify how $C_s$
and $M_s$ depend on $s$. In fact, in general, it is easy to see
from Lemma~\ref{lem1} that $c_1(s)$ and $c_2(s)$, as well as the other
$c_j(s)$,  depend exponentially on $s$. It is especially clear if
the multiplicity $m_0\ge2$. If we wish to control the dependence
on $s$ and to control
the exponential dependence on $s$ then we
need to study tractability which is the subject of the next
section.

\section{Tractability}\label{sectract}

Tractability studies how the information complexity depends on
both $\eps^{-1}$ and $s$. The key point is to characterize when
this dependence is not exponential in $(s^{\,t_1},\eps^{-t_2})$ or in
$(s^{\,t_1},(1+\log\,\eps^{-1})^{t_2})$ for some positive $t_1$ and
$t_2$,
and when this dependence is polynomial in
$(s,\eps^{-1})$ or in $(s,1+\log\,\eps^{-1})$. 
For $t_1=t_2=1$, the survey of
tractability results for general multivariate problems and for the
pair $(s,\eps^{-1})$ can be found in \cite{NW08,NW10,NW12}, and
for more specific multivariate problems and the pair
$(s,1+\log\,\eps^{-1})$ in \cite{DKPW13,DLPW11,IKLP14,KPW14}.

We will cover a number of tractability notions and verify when
they hold for the approximation problem
$\APP=\{\APP_s\}_{s\in\NN}$ in terms of the parameters
$\bsa,\bsb$, $\bsm$ and $\omega$. We will analyze the tractability
notions starting from the weakest notions and continuing to the strongest
notions. A table which gives an overview of the obtained
tractability results is presented in Section~\ref{sec_sum}.

\subsection{Standard Notions of Tractability}
By the standard notions of tractability we mean tractability
notions with respect to the pair $(s,\eps^{-1})$.

\begin{itemize}
\item {\bf $(t_1,t_2)$-Weak Tractability}

As in \cite{SW14}, we say that $\APP$ is $(t_1,t_2)$-weakly
tractable (shortly $(t_1,t_2)$-WT) for positive $t_1$ and $t_2$
iff
$$
\lim_{s+\eps^{-1}\to\infty}\frac{\log\,n(\eps,\APP_s)}{s^{t_1}+\eps^{-t_2}}=0.
$$
This means that $n(\eps,\APP_s)$ is \emph{not} exponential in
$s^{\,t_1}$ and $\eps^{-t_2}$ but it may be exponential in
$s^{\tau_1}$ or $\eps^{-\tau_2}$ for positive $\tau_1<t_1$ or
$\tau_2<t_2$. In particular, if $t_1>1$ we may have the
exponential dependence on $s$ which is called the curse of
dimensionality.

\begin{thm}\label{thm2}

\qquad

\qquad $\APP$ is {\rm $(t_1,t_2)$-WT} for the parameters
$\bsa,\bsb,\bsm$ and $\omega$ \ \ iff \ \ $t_1>1$ or $m_0=1$.
\end{thm}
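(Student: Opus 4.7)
The plan is to prove the equivalence in both directions, with necessity being immediate and sufficiency requiring a spectral (trace) estimate.

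For necessity, suppose $m_0\ge 2$ and $t_1\le 1$. Lemma~\ref{lem1}~\eqref{lem1p1} gives $n(\eps,\APP_s)\ge m_0^s$ for every $\eps\in(0,1)$, so $\log n(\eps,\APP_s)/(s^{t_1}+\eps^{-t_2})\ge s\log m_0/(s^{t_1}+\eps^{-t_2})$. Fixing any $\eps\in(0,1)$ and letting $s\to\infty$ makes this ratio tend to $\log m_0>0$ (if $t_1=1$) or to $+\infty$ (if $t_1<1$); in either case $(t_1,t_2)$-WT fails.

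For sufficiency, my workhorse is the Markov-type spectral bound $n(\eps,\APP_s)\le \eps^{-2\tau}\tru(W_s^\tau)$, valid for every $\tau>0$, together with the factorisation
$$
\tru(W_s^\tau)=\prod_{j=1}^{s}\Bigl(m_0+\sum_{k=1}^{\infty} m_k\,\omega^{\tau a_j k^{b_j}}\Bigr)
$$
that follows from the eigenvalue computation in Section~\ref{secapprox}. Using $a_j\ge a_1>0$, $m_k\le\mmax$, and $k^{b_j}\ge k^{\beta}$ for $k\ge 1$ with $\beta:=\inf_j b_j>0$, a short geometric-series estimate yields a uniform bound $\sum_{k\ge 1}m_k\,\omega^{\tau a_j k^{b_j}}\le C\,\omega^{\tau a_1}$ for all $\tau$ exceeding a threshold $\tau_*$ depending only on $\omega,a_1,\mmax,\beta$. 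Combined with $\log(m_0+x)\le \log m_0+x/m_0$ this gives, for every $\tau\ge\tau_*$,
$$
\log n(\eps,\APP_s)\le 2\tau\log\eps^{-1}+s\log m_0+\tfrac{C s}{m_0}\,\omega^{\tau a_1}.
$$

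I would then treat the two sufficiency cases with different choices of $\tau$. In the case $t_1>1$ (any $m_0$), I fix $\tau=\tau_*$; the bound becomes $\log n(\eps,\APP_s)\le C_1\log\eps^{-1}+s\log m_0+C_2$, and a routine case split on whether $s^{t_1}$ or $\eps^{-t_2}$ dominates shows that each of $\log\eps^{-1}$ and $s$ divided by $s^{t_1}+\eps^{-t_2}$ vanishes as $s+\eps^{-1}\to\infty$ (the condition $t_1>1$ being critical for the $s$-term). In the case $m_0=1$ (any positive $t_1,t_2$), I instead take $\tau=2(\log s)/(a_1\log\omega^{-1})$, so that $\omega^{\tau a_1}=s^{-2}$, producing $\log n(\eps,\APP_s)\le C_3(\log s)(\log\eps^{-1})+O(1)$; the same kind of case split---bounding $\log\eps^{-1}\le (t_1/t_2)\log s$ in the regime $s^{t_1}\ge \eps^{-t_2}$ and symmetrically in the opposite regime---shows $(\log s)(\log\eps^{-1})/(s^{t_1}+\eps^{-t_2})\to 0$ for arbitrary positive $t_1,t_2$.

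The main obstacle is that Lemma~\ref{lem1}~\eqref{lem1p3} is \emph{not} sharp enough for the $m_0=1$ branch of the sufficiency: when $\lim_j a_j<\infty$ we have $j(\eps)=\infty$, so the product there extends over all $s$ factors and yields a bound of order $s\log\log\eps^{-1}$, which gives $(t_1,t_2)$-WT only for $t_1>1$. One really needs the trace-type estimate above---with $\tau$ growing logarithmically in $s$---to capture the polynomial-in-$s$ growth of $n(\eps,\APP_s)$ (for fixed $\eps$) that forces $(t_1,t_2)$-WT for \emph{every} positive $t_1,t_2$ as soon as $m_0=1$.
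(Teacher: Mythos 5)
Your proof is correct, and while the necessity direction and the $t_1>1$ branch run essentially parallel to the paper's argument, your treatment of the $m_0=1$ branch is genuinely different. The paper handles that case by importing the recursion $n(\eps,\APP_s)\le \frac{s!}{(s-a_s(\eps))!}\prod_{j=1}^{a_s(\eps)}n(\eps^{1/j},\APP_1)$ from \cite{PP14}, which yields $\log n(\eps,\APP_s)=\cO(\log\eps^{-1}\log s)+\cO(\log\eps^{-1}\log\log\eps^{-1})$; you instead use the Chebyshev--Markov trace bound $n(\eps,\APP_s)\le \eps^{-2\tau}\,\tru(W_s^\tau)$ with a $\tau$ growing like $\log s$, which gives the comparable bound $\cO((1+\log s)\log\eps^{-1})+\cO(1)$ and then the same final limit computation. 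Your route is self-contained (the trace factorisation and the geometric-series estimate are exactly the computations the paper itself performs in the proofs of Theorems~\ref{thm5} and~\ref{thm7}, so nothing external is needed), and your closing remark correctly identifies why Lemma~\ref{lem1}~\eqref{lem1p3} alone cannot close the $m_0=1$ case when $\lim_j a_j<\infty$ --- this is precisely why the paper switches tools there. Two small points to tidy up: in the $t_1>1$ branch the residual term is $\tfrac{Cs}{m_0}\omega^{\tau_* a_1}$, i.e.\ linear in $s$ rather than a constant $C_2$ (harmless, since $s^{1-t_1}\to 0$); and in the $m_0=1$ branch the choice $\tau=2(\log s)/(a_1\log\omega^{-1})$ drops below your threshold $\tau_*$ for small $s$, so you should take $\tau=\max\bigl(\tau_*,\,2(\log s)/(a_1\log\omega^{-1})\bigr)$ or dispose of bounded $s$ separately; neither affects the conclusion.
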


\begin{proof}
Suppose that $\APP$ is $(t_1,t_2)$-WT for the parameters
$\bsa,\bsb,\bsm$ and $\omega$.
Then for fixed $\varepsilon\in (0,1)$ 
we obtain from \eqref{lem1p1} of Lemma~\ref{lem1} that 
$$0=\lim_{s \rightarrow \infty} 
\frac{n(\varepsilon,\APP_s)}{s^{t_1} + 
\varepsilon^{-t_2}} \ge \lim_{s \rightarrow \infty} 
\frac{s \log m_0}{s^{t_1} + \varepsilon^{-t_2}}=\lim_{s \rightarrow
  \infty} s^{1-t_1} \log m_0$$ 
and hence we must have $t_1>1$ or $m_0=1$. 

Suppose now that $t_1>1$. We first show that the hardest case of
$\APP$ is for constant $\bsa$ and $\bsb$, i.e., $a_j\equiv a_1$
and $b_j\equiv b_1$. Indeed, the eigenvalues of $W_s$, which
define $n(\eps,\APP_s)$, are
$\omega^{\sum_{j=1}^sa_j[k(n_j)]^{b_j}}$. Clearly,
$$
\sum_{j=1}^sa_j[k(n_j)]^{b_j}\ge \sum_{j=1}^sa_1[k(n_j)]^{b_0},
$$
where $b_0=\inf_jb_j$. Due to \eqref{3}, we have $b_0>0$.
Therefore
$$
\omega^{\sum_{j=1}^sa_j[k(n_j)]^{b_j}}\le
\omega^{\sum_{j=1}^sa_1[k(n_j)]^{b_0}},
$$
and $n(\eps,\APP_s)$ is maximized for $a_j\equiv a_1$ and
$b_j\equiv b_0$ (and just now $b_0=b_1$).

Hence, it is enough to show $(t_1,t_2)$-WT for constant $\bsa$ and
$\bsb$. From \eqref{lem1p3} of Lemma~\ref{lem1} we have
$$
\log\,n(\eps,\APP_s)\le s\,\left( \log\,m_0\ +\
\log\left(1+\frac{\mmax2^{1/b_1}}{m_0(a_1\log\,\omega^{-1})^{1/b_1}}\,
[\log\,\eps^{-1}]^{1/b_1}\right)\right).
$$
This shows that for small $\eps$ we have
\begin{equation}\label{generalbound}
\log\,n(\eps,\APP_s)=\mathcal{O}(s\,\log\,\log\,\eps^{-1})
\end{equation}
with the factor in the big $\mathcal{O}$ notation independent of
$s$ and $\eps^{-1}$. Hence,
$$
\frac{\log\,n(\eps,\APP_s)}{s^{t_1}+\eps^{-t_2}}=
\mathcal{O}\left(\frac{s\,\log\,\log\,\eps^{-1}}{s^{t_1}+\eps^{-t_2}}\right).
$$
Let $y=\max(s^{t_1},\eps^{-t_2})$. 
Then $\eps^{-1}\le y^{1/t_2}$,
$s\le y^{1/t_1}$ and 
$$
\frac{s\,\log\,\log\,\eps^{-1}}{s^{t_1}+\eps^{-t_2}}\le
\frac{y^{1/t_1}\,\log\,\log\,y^{1/t_2}}{y}=
\frac{\log\,\log\,y^{1/t_2}}{y^{1-1/t_1}}
$$
and it goes to zero as $s+\eps^{-1}$, or equivalently $y$,
approaches infinity since
$t_1>1$ and $t_2>0$. This proves $(t_1,t_2)$-WT.

Finally, suppose that $m_0=1$.
Then the second largest eigenvalue for all $s$ is
$\lambda_{1,2}=\omega^{a_1}$, which is smaller than the largest 
eigenvalue $\lambda_{s,1}=1$. 
As above it suffices to consider $\APP$ for constant $\bsa$ and
$\bsb$, i.e. $a_j \equiv a_1$ and $b_j \equiv b_1$. 
In this case, we can use 
an estimate for the information complexity 
which has been shown in \cite[p.~611]{PP14},
and which states 
$$
n(\varepsilon,\APP_s) \le \frac{s!}{(s-a_s(\varepsilon))!} 
\prod_{j=1}^{a_s(\varepsilon)} n(\varepsilon^{1/j},\APP_1),
$$ 
where 
$$
a_s(\varepsilon)= \min\left\{s,\left\lfloor 2\, 
\frac{\log \varepsilon^{-1}}{\log \omega^{-a_1}}
\right\rfloor -1\right\}.
$$ 
Then we have
\begin{equation}\label{prthm2a}
\log n(\varepsilon,\APP_s) 
\le \log \frac{s!}{(s-a_s(\varepsilon))!} +  
\sum_{j=1}^{a_s(\varepsilon)} \log n(\varepsilon^{1/j},\APP_1).
\end{equation}
{}From \eqref{s=1} 
with the assumption $m_0=1$ we obtain 
$$
n(\varepsilon^{1/j},\APP_1) \le 1+
m_{\max}\left(\frac{x(\varepsilon^{1/j})}{a_1}\right)^{1/b_1}\le 
2 \,m_{\max}\,\max\left(1,
\left(\frac{x(\varepsilon^{1/j})}{a_1}\right)^{1/b_1}\right).
$$ 
Note that
$$
\left(\frac{x(\varepsilon^{1/j})}{a_1}\right)^{1/b_1}=
\left(\frac2{a_1\,j}\,\frac{\log\,\varepsilon^{-1}}{\log\,\omega^{-1}}
\right)^{1/b_1}
\le
\left(\frac2{a_1}\,\frac{\log\,\varepsilon^{-1}}{\log\,\omega^{-1}}
\right)^{1/b_1}.
$$
Assume that $\varepsilon\le \omega^{a_1/2}$. Then the last
right hand side is at least one and therefore
\begin{eqnarray*}
\log n(\varepsilon^{1/j},\APP_1) & \le & \log
\left(2\, 
m_{\max} \left(\frac{2}{a_1}\right)^{1/b_1}\right)+\frac{1}{b_1} 
\log \left(\frac{\log \varepsilon^{-1}}{\log \omega^{-1}}\right)\\
& = & C_1 +C_2 \log \log \varepsilon^{-1},
\end{eqnarray*}
where $C_1=\log(2 \, m_{\max} (\tfrac{2}{a_1})^{1/b_1})-
\tfrac{1}{b_1}\log \log \omega^{-1}$ and $C_2=\tfrac{1}{b_1}$. 
Hence we obtain
\begin{eqnarray}\label{prthm2b}
\sum_{j=1}^{a_s(\varepsilon)} \log n(\varepsilon^{1/j},\APP_1) & \le & 
a_s(\varepsilon) (C_1+C_2 \log \log \varepsilon^{-1})\nonumber\\
& \le & C_3 \log \varepsilon^{-1} \log \log \varepsilon^{-1} 
\end{eqnarray}
with a suitable $C_3>0$. Hence we have
$$
\limsup_{s+\varepsilon^{-1}\rightarrow \infty} 
\frac{\log n(\varepsilon,\APP_s)}{s^{t_1}+\varepsilon^{-t_2}} \le
\limsup_{s+\varepsilon^{-1}\rightarrow \infty} 
\frac{\log \frac{s!}{(s-a_s(\varepsilon))!}}{s^{t_1}+
  \varepsilon^{-t_2}}.
$$
Since 
$$
\frac{s!}{(s-a_s(\varepsilon))!}=(s-a_s(\varepsilon)+1)(s-a_s(\varepsilon)+2)
\cdots s\le
s^{a_s(\varepsilon)}
$$
we have  
\begin{equation}\label{prthm2c}
\log\,\frac{s!}{(s-a_s(\varepsilon))!}\le a_s(\varepsilon)\,\log\,s=
\cO(\log\,\varepsilon^{-1}\,\log\,s).
\end{equation}
As before, let $y=\max(s^{t_1},\eps^{-t_2})$. 
Since $t_1>0$ and $t_2>0$ we have $\eps^{-1}\le y^{1/t_2}$,
$s\le y^{1/t_1}$ and 
$$
\frac{\log\,\eps^{-1}\ \log\,s}{s^{t_1}+\eps^{-t_2}}\le
\frac{[\log\,y]^2}{t_1t_2\,y}
$$
goes to zero as $s^{t_1}+\eps^{-t_2}$, or equivalently $y$, approaches
  infinity.
Hence 
$$
\limsup_{s+\varepsilon^{-1}\rightarrow \infty} \frac{\log
  n(\varepsilon,\APP_s)}{s^{t_1}+\varepsilon^{-t_2}} =
\lim_{s+\varepsilon^{-1}\rightarrow \infty} \frac{\log
  n(\varepsilon,\APP_s)}{s^{t_1}+\varepsilon^{-t_2}} 
=0.
$$
\end{proof}

\item{\bf Weak and Uniform Weak Tractability}

Weak tractability (WT) corresponds to $(t_1,t_2)$-WT for
$t_1=t_2=1$. Uniform weak tractability (UWT) holds iff
we have $(t_1,t_2)$-WT for all $t_1,t_2\in(0,1]$. 

\begin{thm}\label{thm3}

\qquad

\qquad $\APP$ is {\rm WT}\quad iff\quad $\APP$ is {\rm UWT} \quad iff\quad $m_0=1$.
\end{thm}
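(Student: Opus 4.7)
The plan is to exploit the chain of implications UWT $\Rightarrow$ WT $\Rightarrow m_0=1 \Rightarrow$ UWT, where the two endpoint implications are essentially immediate given results already proved, and only the middle implication needs any real work.

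First, UWT $\Rightarrow$ WT is tautological because WT is just $(1,1)$-WT and $(1,1)\in(0,1]^2$.

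Second, to show WT $\Rightarrow m_0=1$, I would argue by contrapositive. Suppose $m_0\ge 2$. Fix any $\eps\in(0,1)$. By part \eqref{lem1p1} of Lemma~\ref{lem1} we have $n(\eps,\APP_s)\ge m_0^s$ for all $s\in\NN$, so
\[
\frac{\log n(\eps,\APP_s)}{s+\eps^{-1}}\ge \frac{s\,\log m_0}{s+\eps^{-1}}\ \longrightarrow\ \log m_0>0
\]
as $s\to\infty$ with $\eps$ fixed. This contradicts WT.

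Third, for $m_0=1 \Rightarrow$ UWT, I would simply invoke Theorem~\ref{thm2}: that theorem states that $(t_1,t_2)$-WT holds whenever $t_1>1$ \emph{or} $m_0=1$. Since the assumption $m_0=1$ makes $(t_1,t_2)$-WT hold for \emph{every} pair of positive $t_1,t_2$, it in particular holds for all $t_1,t_2\in(0,1]$, which is precisely UWT. So no additional estimate beyond the upper bound already established in the proof of Theorem~\ref{thm2} (using the inequality $n(\eps,\APP_s)\le \frac{s!}{(s-a_s(\eps))!}\prod_{j=1}^{a_s(\eps)}n(\eps^{1/j},\APP_1)$ from \cite{PP14}) is needed.

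The main obstacle is really the $m_0=1\Rightarrow$ UWT implication, and even that has already been handled inside Theorem~\ref{thm2}; the quantitative core is the bound $\log n(\eps,\APP_s)=\cO(\log\eps^{-1}\log s)+\cO(\log\eps^{-1}\log\log\eps^{-1})$, whose second term is $o(\eps^{-t_2})$ for every $t_2>0$ and whose first term, after writing $y=\max(s^{t_1},\eps^{-t_2})$, is bounded by $(\log y)^2/(t_1t_2)=o(y)$. Hence the proof reduces to three short lines: cite Theorem~\ref{thm2}, observe the trivial implication, and use Lemma~\ref{lem1}\eqref{lem1p1}.
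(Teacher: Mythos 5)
Your proof is correct and follows essentially the same three-step chain as the paper: UWT $\Rightarrow$ WT trivially, WT $\Rightarrow m_0=1$ via Lemma~\ref{lem1}\eqref{lem1p1} (exactly the argument already carried out in the proof of Theorem~\ref{thm2}), and $m_0=1\Rightarrow$ UWT by citing a tractability result established elsewhere in the paper. The only difference is cosmetic: you close the loop by invoking Theorem~\ref{thm2}, whose sufficiency direction gives $(t_1,t_2)$-WT for \emph{all} positive $t_1,t_2$ when $m_0=1$ and hence UWT in particular, whereas the paper forward-references Theorem~\ref{thm4} (QPT implies UWT); your choice avoids the forward reference and is, if anything, tidier.
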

\begin{proof}
Since UWT implies WT, it is enough to show that WT implies 
$m_0=1$, and that $m_0=1$ implies UWT.
Suppose then 
that $\APP$ is WT. From the previous proof we conclude
that $m_0=1$. On the other hand, if $m_0=1$ then $\APP$ is not
only UWT but it is quasi-polynomially tractable which is a stronger
notion than UWT. This will be shown in a moment.
\end{proof}
\item {\bf Quasi-Polynomial Tractability}

$\APP$ is quasi-polynomially tractable (QPT) iff there are
positive numbers $C$ and $t$ such that
$$
n(\eps,\APP_s)\le
C\,\exp\left(t(1+\log\,s)(1+\log\,\eps^{-1})\right) \ \ \ \
\mbox{for all}\ \ s\in\NN,\ \eps\in(0,1).
$$
The infimum of $t$ satisfying the bound above is denoted by $t^*$,
and is called the exponent of QPT. Clearly, QPT implies UWT.

\begin{thm}\label{thm4}

\qquad

\qquad $\APP$ is {\rm QPT} \ \ iff\ \ $m_0=1$.

\qquad If $m_0=1$ then the exponent of {\rm QPT} is
 $t^*\le\frac2{a_1\,\log\,\omega^{-1}}$ and the last bound becomes an
 equality for constant $\bsa$ and $\bsb$.
\end{thm}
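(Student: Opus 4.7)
The proof has three parts: showing that $m_0=1$ is necessary for QPT, establishing QPT together with the bound $t^*\le\tau:=\tfrac{2}{a_1\log\omega^{-1}}$ when $m_0=1$, and the matching sharpness for constant $\bsa$ and $\bsb$.

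For necessity I use Lemma~\ref{lem1}\eqref{lem1p1}, which yields $n(\eps,\APP_s)\ge m_0^s$ for every $\eps\in(0,1)$. If QPT held, then $s\log m_0\le\log C+t(1+\log s)(1+\log\eps^{-1})$ for a fixed $\eps$ and all $s\in\NN$; dividing by $s$ and letting $s\to\infty$ forces $\log m_0\le 0$, hence $m_0=1$.

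For sufficiency I assume $m_0=1$ and, exactly as in the proof of Theorem~\ref{thm2}, reduce to $a_j\equiv a_1$ and $b_j\equiv b_0:=\inf_j b_j$, the case in which $n(\eps,\APP_s)$ is maximised. I then combine two bounds. For small $s$, Lemma~\ref{lem1}\eqref{lem1p3} gives $\log n(\eps,\APP_s)\le s\log(1+\mmax\lceil (x(\eps)/a_1)^{1/b_0}\rceil)=\cO(s\log\log\eps^{-1})$, which, for $s$ bounded in terms of the slack $t-\tau$, is easily absorbed into $t(1+\log s)(1+\log\eps^{-1})$. For large $s$, I apply the bound
\[
n(\eps,\APP_s)\le\frac{s!}{(s-a_s(\eps))!}\prod_{j=1}^{a_s(\eps)}n(\eps^{1/j},\APP_1),\qquad a_s(\eps)=\min\bigl\{s,\bigl\lfloor\tfrac{2\log\eps^{-1}}{a_1\log\omega^{-1}}\bigr\rfloor-1\bigr\},
\]
from \cite[p.~611]{PP14} already used in Theorem~\ref{thm2}. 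Estimate~\eqref{prthm2c} bounds $\log\frac{s!}{(s-a_s(\eps))!}\le a_s(\eps)\log s\le\tau\log\eps^{-1}\log s$. The key step is to \emph{sharpen} the crude estimate~\eqref{prthm2b}: retaining the factor $1/j$ in $n(\eps^{1/j},\APP_1)\le 2\mmax(x(\eps)/(a_1 j))^{1/b_0}$ and invoking Stirling's formula $\log a_s(\eps)!\ge a_s(\eps)\log a_s(\eps)-a_s(\eps)$ collapses the stray $\log\log\eps^{-1}$ factor and yields $\sum_{j=1}^{a_s(\eps)}\log n(\eps^{1/j},\APP_1)=\cO(\log\eps^{-1})$ (via the uniform estimate $a_s(\eps)\log(x(\eps)/(a_1a_s(\eps)))\le x(\eps)/(a_1 e)$). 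Combining gives $\log n(\eps,\APP_s)\le\tau\log\eps^{-1}\log s+C\log\eps^{-1}$, and for any $t>\tau$ the surplus $(t-\tau)\log s\log\eps^{-1}$ absorbs $C\log\eps^{-1}$ once $\log s$ exceeds a threshold depending on $t-\tau$ and $C$; below that threshold the direct Lemma~\ref{lem1}\eqref{lem1p3} bound takes over. Hence $n(\eps,\APP_s)\le C'\exp(t(1+\log s)(1+\log\eps^{-1}))$ for every $t>\tau$, so QPT holds with $t^*\le\tau$.

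For the matching sharpness assume $a_j\equiv a_1$ and $b_j\equiv b_1$, and set $d=\lfloor x(\eps)/a_1\rfloor$. Every $\bsk\in\{0,1\}^s$ with at most $d$ ones satisfies $\sum_j a_1k_j^{b_1}\le a_1 d<x(\eps)$, so $n(\eps,\APP_s)\ge\binom{s}{d}\ge(s/d)^d$ and $\log n(\eps,\APP_s)\ge d\log s-d\log d$. Fixing $\eps$ and letting $s\to\infty$ gives $\tfrac{\log n(\eps,\APP_s)}{(1+\log s)(1+\log\eps^{-1})}\to\tfrac{d}{1+\log\eps^{-1}}$; then letting $\eps\to 0$ the right-hand side tends to $\tau$, which forces $t^*\ge\tau$ and completes the equality. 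The main obstacle is the Stirling-based refinement of~\eqref{prthm2b}: the coarser $\cO(\log\eps^{-1}\log\log\eps^{-1})$ estimate of Theorem~\ref{thm2} would only yield $t^*\le C\cdot\tau$ for some constant $C>1$, and isolating the sharp exponent $\tau$ requires exploiting the $j$-dependence of $n(\eps^{1/j},\APP_1)$ together with Stirling applied to $\log a_s(\eps)!$.
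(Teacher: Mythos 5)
Your proof is correct, but it takes a genuinely different route from the paper's. For sufficiency and the exponent bound, the paper simply reduces to constant $\bsa,\bsb$ (as you do) and then invokes the general characterization of QPT for linear tensor product problems from \cite{GW11}: QPT holds iff $\lambda_2<\lambda_1$ and the univariate eigenvalues decay polynomially, in which case $t^*=\max\bigl(2/\mathrm{decay}_\lambda,\,2/\log(\lambda_1/\lambda_2)\bigr)$; here $\lambda_1=1$, $\lambda_2=\omega^{a_1}$ and $\mathrm{decay}_\lambda=\infty$, so both the QPT claim and the exact exponent $t^*=2/(a_1\log\omega^{-1})$ drop out in three lines. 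You instead re-derive both directions from scratch: the upper bound via the \cite{PP14} recursion with a Stirling refinement of \eqref{prthm2b} (your key estimate $\sum_{j=1}^{a_s(\eps)}\log n(\eps^{1/j},\APP_1)=\cO(\log\eps^{-1})$ is correct, since $N\log\frac{x(\eps)}{a_1N}\le\frac{x(\eps)}{a_1e}$ for $N\le x(\eps)/a_1$, and your two-regime absorption argument for $t>\tau$ is sound), and the lower bound via counting $\bsk\in\{0,1\}^s$ with at most $d$ ones, which correctly forces $t\ge d/(1+\log\eps^{-1})\to\tau$. What the paper's route buys is brevity and the exact exponent with no computation; what yours buys is a self-contained argument that does not rely on the black-box theorem of \cite{GW11}, at the cost of roughly a page of estimates. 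Two trivial points to tidy up: with $d=\lfloor x(\eps)/a_1\rfloor$ the membership condition $\sum_ja_1k_j^{b_1}<x(\eps)$ can fail with equality when $x(\eps)/a_1\in\NN$, so take $d=\lceil x(\eps)/a_1\rceil-1$; and in the small-$s$ regime you should note that the Lemma~\ref{lem1}\eqref{lem1p3} bound $\cO(s\log\log\eps^{-1})$ is dominated by $t(1+\log\eps^{-1})$ only for $\eps$ below a threshold depending on the constant $S_0$, the finitely many remaining $(s,\eps)$ pairs being absorbed into $C'$.
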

\begin{proof}
Suppose that $\APP$ is QPT. Then $\APP$ is UWT and $m_0=1$.

We now show that $m_0=1$ implies QPT and $t^*\le
2/(a_1\,\log\,\omega^{-1})$. As before, it is enough to prove it
for constant $\bsa$ and $\bsb$. In this case $H_{s,\bsa,\bsb}$ is
the tensor product of $s$ copies of $H_{a_1,b_1}$. Then the
eigenvalues $\{\lambda_{s,k}\}_{k\in \NN}$ of $W_s$ are products
of the eigenvalues $\{\lambda_k\}_{k\in \NN}$ of $W_1$, i.e.,
$\{\lambda_{s,k}\}_{k\in\NN}=\{\lambda_{k_1}\lambda_{k_2}
\cdots\lambda_{k_s}\}_{k_1,k_2,\dots,k_s\in\NN}$ with the ordered (distinct)
eigenvalues $\lambda_k=\omega^{a_1(k-1)^{b_1}}$ for $k\in \NN$. It
is proved in \cite{GW11} that $\APP$ is QPT iff
$\lambda_2<\lambda_1$  and $\mbox{decay}_\lambda:=\sup\{r\,:\
\lim_kk^r\lambda_k=0\}>0$. If so then
$$
t^*=\max\left(\frac2{\mbox{decay}_\lambda},\frac2{\log\,
\frac{\lambda_1}{\lambda_2}}\right).
$$
In our case, $\lambda_1=1$ and $\lambda_2=\omega^{a_1}$ so that
the assumption $\lambda_2<\lambda_1$ holds. Furthermore
$\lim_kk^r\,\omega^{a_1(k-1)^{b_1}}=0$ for all $r>0$, so that
$\mbox{decay}_\lambda=\infty$. Hence,
$t^*=2/(a_1\,\log\,\omega^{-1})$, as claimed.
\end{proof}

\item {\bf Polynomial and Strong Polynomial Tractability}

$\APP$ is polynomially tractable (PT) iff there are positive $C,p$
and $q\ge0$ such that
$$
n(\eps,\APP_s)\le C\,s^{\,q}\,\eps^{-p}\ \ \ \ \mbox{for all}\ \
s\in \NN,\ \eps\in(0,1).
$$
$\APP$ is strongly polynomially tractable (SPT) iff the last bound
holds for $q=0$. Then the infimum of $p$ in the bound above is
denoted by $p^*$, and is called the exponent of SPT. For
simplicity, we assume that
$$
\alpha:=\lim_{j\to\infty}\ \frac{a_j}{\log\,j}
$$
exists.

\begin{thm}\label{thm5}

\qquad

\qquad $\APP$ is {\rm SPT}\ \ iff\ \ $\APP$ is {\rm PT}\ \ iff
$$
m_0=1\ \ \ \mbox{and}\ \ \ \alpha>0.
$$
\qquad If this is the case then the exponent of {\rm SPT} is
$p^*=\frac2{\alpha\,\log\,\omega^{-1}}$.
\end{thm}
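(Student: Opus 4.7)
My plan is to split the double implication into (i) $\text{SPT} \Rightarrow \text{PT}$ (trivial), (ii) $\text{PT} \Rightarrow m_0 = 1 \wedge \alpha > 0$, and (iii) $m_0 = 1 \wedge \alpha > 0 \Rightarrow \text{SPT}$ with exponent $p^*\le 2/(\alpha \log \omega^{-1})$, and to close with a matching lower bound on $p^*$. The condition $m_0=1$ is forced because Lemma~\ref{lem1}\,(\ref{lem1p1}) gives $n(\eps,\APP_s)\ge m_0^s$, and any polynomial-in-$(s,\eps^{-1})$ bound must fail as $s\to\infty$ otherwise. The main new work is therefore the necessity and the sharpness of $\alpha > 0$.

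For necessity of $\alpha>0$ I would exploit explicit eigenvalues of $W_s$ coming from Bernoulli-type multi-indices: since $m_0=1$ yields $k(0)=0$ and $k(1)=1$, the choice $\bsn=\mathbf{1}_S$ for $S\subseteq\{1,\dots,s\}$ produces the eigenvalue $\omega^{\sum_{j\in S}a_j}$. Fix $r\in\NN$, set $T=x(\eps)/r$, and let $N_T=|\{j\ge 1:a_j<T\}|$. Since $\{a_j\}$ is nondecreasing, every $r$-subset of $\{1,\dots,\min(s,N_T)\}$ contributes an eigenvalue strictly above $\eps^2$, so
\begin{equation*}
n(\eps,\APP_s)\ \ge\ \binom{\min(s,N_T)}{r}.
\end{equation*}
If $\alpha=0$, then for every $\delta>0$ we have $a_j\le\delta\log j$ for all sufficiently large $j$, which gives $N_T\ge\tfrac12 e^{T/\delta}$ for large $T$ (and $N_T=\infty$ if $\{a_j\}$ is bounded, in which case one simply lets $s\to\infty$). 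Plugging $s=\lceil N_T\rceil$ into the assumed PT bound $n(\eps,\APP_s)\le C s^q\eps^{-p}$ and rearranging yields $N_T^{\,r-q}\le C'\eps^{-p}$ for every fixed $r>q$. Substituting the exponential lower bound on $N_T$ forces $2(r-q)/(r\delta\log\omega^{-1})\le p$ for every $\delta>0$, and $\delta\downarrow 0$ gives the contradiction. The delicate point is that PT tolerates the factor $s^q$, so unit-vector counting alone is not enough once $q\ge 1$; introducing the parameter $r$ and sending it to infinity is what breaks the argument, and this is the main technical obstacle.

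For sufficiency I would use the standard trace bound $n(\eps,\APP_s)\le\eps^{-2\tau}\,\tru(W_s^\tau)$, valid for every $\tau>0$, together with the product formula
\begin{equation*}
\tru(W_s^\tau)\ =\ \prod_{j=1}^s\sum_{k=0}^\infty m_k\,\omega^{\tau a_j k^{b_j}}.
\end{equation*}
Using $b_j\ge \inf_j b_j>0$, a geometric comparison shows each factor is at most $1+C\,\omega^{\tau a_j}$ once $a_j$ is large. The hypothesis $\alpha>0$ gives $a_j\ge(\alpha-\gamma)\log j$ for large $j$, so $\omega^{\tau a_j}\le j^{-\tau(\alpha-\gamma)\log\omega^{-1}}$ is summable in $j$ whenever $\tau>1/((\alpha-\gamma)\log\omega^{-1})$, which makes $\prod_{j=1}^s(1+C\omega^{\tau a_j})$ converge uniformly in $s$. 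This yields SPT with exponent at most $2\tau$, and $\gamma\downarrow 0$ together with $\tau\downarrow 1/(\alpha\log\omega^{-1})$ gives $p^*\le 2/(\alpha\log\omega^{-1})$. The matching lower bound on $p^*$ comes from the single-coordinate eigenvalues $\omega^{a_j}$: they imply $n(\eps,\APP_s)\ge|\{j\le s:a_j<x(\eps)\}|\ge\tfrac12\,\eps^{-2/((\alpha+\gamma)\log\omega^{-1})}$ once $s$ is taken $\ge e^{x(\eps)/(\alpha+\gamma)}$, and comparing with the SPT bound $n(\eps,\APP_s)\le C\eps^{-p}$ for every $p>p^*$ forces $p^*\ge 2/((\alpha+\gamma)\log\omega^{-1})$; sending $\gamma\downarrow 0$ completes the proof.
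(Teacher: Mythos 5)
Your proposal is correct, and while the sufficiency half coincides with the paper's computation, the necessity half takes a genuinely different and more elementary route. The paper runs the entire proof through the eigenvalue criterion of \cite[Theorem~5.2]{NW08}: PT holds iff $\sup_s\bigl(\sum_k\lambda_{s,k}^\tau\bigr)^{1/\tau}s^{-q}<\infty$ for some $\tau>0$, $q\ge0$, and SPT iff this holds with $q=0$, with $p^*=\inf 2\tau$. Writing $\omega^{\tau a_j}=(j+1)^{-x_j}$, the paper shows that $\alpha=0$ makes $\sum_{j\le s}(j+1)^{-x_j}=\Theta(s^{1-\delta})$ for every $\delta$, so the trace is exponential in $s^{1-\delta}$ and no $q$ can save PT; and that $\tau<(\alpha\log\omega^{-1})^{-1}$ makes the sum grow like $\log s$, which kills $q=0$ and yields $p^*\ge 2/(\alpha\log\omega^{-1})$. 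You instead prove necessity by directly counting eigenvalues above the threshold: the multi-indices $\bsk=\mathbf{1}_S$ with $|S|=r$ and $a_j<x(\eps)/r$ on $S$ give $n(\eps,\APP_s)\ge\binom{\min(s,N_T)}{r}$, and choosing $r>q$ defeats the polynomial factor $s^q$; likewise your lower bound $p^*\ge 2/(\alpha\log\omega^{-1})$ comes from counting the single-coordinate eigenvalues $\omega^{a_j}$ rather than from the divergence of a $\tau$-series. Your sufficiency argument (the Chebyshev-type bound $n(\eps,\APP_s)\le\eps^{-2\tau}\tru(W_s^\tau)$, the factorization of the trace, the comparison of each factor with $1+C\omega^{\tau a_j}$, and summability of $j^{-\tau(\alpha-\gamma)\log\omega^{-1}}$) is essentially identical to the paper's, since the cited criterion is itself proved by exactly this trace bound. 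What your version buys is self-containedness — no appeal to the external characterization theorem — and a slightly more transparent reason why $\alpha=0$ is fatal (exponentially many coordinates become ``active''); what the paper's version buys is brevity and a uniform template reused in Theorem~\ref{thm4}. All steps in your argument check out, including the use of monotonicity of $\{a_j\}$ to make the active coordinates an initial segment and the standing assumption that the limit $\alpha$ exists, which you need on both sides ($a_j\le(\alpha+\gamma)\log j$ for the lower bounds, $a_j\ge(\alpha-\gamma)\log j$ for the upper bound).
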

\begin{proof}
We use \cite[Theorem~5.2]{NW08} which states necessary and
sufficient conditions on PT and SPT in terms of the eigenvalues
$\{\lambda_{s,k}\}_{k\in \NN}$ of $W_s$. For our problem we have
$\lambda_{s,1}=1$. Namely, $\APP$ is PT iff there are numbers
$q\ge0$ and $\tau>0$ such that
\begin{equation}\label{ptspt}
\sup_{s\in\NN}
\,\left(\sum_{k=1}^\infty\lambda_{s,k}^\tau\right)^{1/\tau}\,s^{\,-q}
 <\infty,
\end{equation}
and it is SPT iff the last inequality holds with $q=0$. Then the
exponent $p^*$ of SPT is the infimum of $2\tau$ for $\tau$
satisfying \eqref{ptspt} with $q=0$.

In our case,
$$
\sum_{k=1}^\infty\lambda_{s,k}^\tau=\sum_{\bsn\in\NN_0^s}
\prod_{j=1}^s\omega^{\tau\,a_j[k(n_j)]^{b_j}}=
\prod_{j=1}^s\left(m_0+\sum_{k=1}^\infty
m_k\omega^{\tau\,a_j\,k^{b_j}}\right).
$$
Let $b_0=\inf_jb_j$ and
$C_{\tau}=\mmax\,\sum_{k=1}^\infty\omega^{\tau a_1(k^{b_0}-1)}$.
Then $b_0>0$ and $C_{\tau}<\infty$ for all $\tau>0$. Furthermore,
$$
m_1\,\omega^{\tau\,a_j}\le\sum_{k=1}^\infty
m_k\omega^{\tau\,a_j\,k^{b_j}}\le
\mmax\,\omega^{\tau\,a_j}\,\sum_{k=1}^\infty\omega^{\tau\,a_j(k^{b_j}-1)}
\le C_{\tau}\,\omega^{\tau\,a_j}.
$$
Therefore
\begin{equation}\label{ccc}
\prod_{j=1}^s\left(m_0+m_1\omega^{\tau\,a_j}\right)\le
\prod_{j=1}^s\left(m_0+\sum_{k=1}^\infty
m_k\omega^{\tau\,a_j\,k^{b_j}}\right) \le\prod_{j=1}^s\left(m_0+
C_{\tau}\,\omega^{\tau\,a_j}\right).
\end{equation}
Let $\omega^{\tau\,a_j}=(j+1)^{-x_j}$. That is,
$$
x_j=\frac{ a_j}{\log(j+1)}\, \tau\,\log\,\omega^{-1}\ \ \
\mbox{and}\ \ \ \lim_{j\to\infty}x_j=
\alpha\,\tau\,\log\,\omega^{-1}.
$$
Then
$$
\prod_{j=1}^s\left(m_0+\frac{m_1}{(j+1)^{x_j}}\right)\le
\prod_{j=1}^s\left(m_0+\sum_{k=1}^\infty
m_k\omega^{\tau\,a_j\,k^{b_j}}\right)  \le
\prod_{j=1}^s\left(m_0+\frac{C_{\tau}}{(j+1)^{x_j}}\right).
$$
Hence, \eqref{ptspt} holds iff $m_0=1$ and $\lim_jx_j\ge1$.
Indeed, $m_0=1$ is clear because otherwise we have an exponential
dependence on $s$. For $m_0=1$, let $\beta\in\{m_1,C_{\tau}\}$.
Then
$$
\prod_{j=1}^s\left(m_0+\frac{\beta}{(j+1)^{x_j}}\right)=
\exp\left(\sum_{j=1}^s\log(1+\beta\,(j+1)^{-x_j})\right).
$$
Furthermore,
$$
\sum_{j=1}^s\log(1+\beta\,(j+1)^{-x_j})=
\Theta\left(\sum_{j=1}^s(j+1)^{-x_j}\right),
$$
with the factors in the big $\Theta$ notation independent of $s$
and $j$.

Suppose that $\alpha=0$. Then $\lim_jx_j=0$ for all $\tau$. This
means for all $\delta\in(0,1)$ there is an integer
$j(\delta,\tau)$ such that $x_j\le\delta$ for all $j\ge
j(\delta,\tau)$, and
$\sum_{j=1}^s(j+1)^{-x_j}=\Theta(s^{1-\delta})$. Hence
$$
\prod_{j=1}^s\left(m_0+\frac{\beta}{(j+1)^{x_j}}\right) \ \
\mbox{as
  well as}\ \
\left(\sum_{k=1}^\infty\lambda_{s,k}^{\tau}\right)^{1/\tau}
$$
is exponential in $s^{1-\delta}$. This means that \eqref{ptspt}
does not hold for any positive $\tau$ and non-negative $q$. Hence,
we do not have PT.

Suppose now that $\alpha>0$. Then $\lim_jx_j>1$ for
$\tau>(\alpha\,\log\,\omega^{-1})^{-1}$. This implies that
$$
\sum_{j=1}^s(j+1)^{-x_j}
 \ \ \mbox{as
  well as}\ \
\left(\sum_{k=1}^\infty\lambda_{s,k}^{\tau}\right)^{1/\tau}
$$
is uniformly bounded in $s$. Hence, \eqref{ptspt} holds for $q=0$
and we have SPT with the exponent $p^*\le
2/(\alpha\,\log\,\omega^{-1})$. For
$\tau<(\alpha\,\log\,\omega^{-1})^{-1}$, the series
$\sum_{j=1}^s(j+1)^{-x_j}$ is of order at least $\log\,s$ and
\eqref{ptspt} may hold only for $q>0$. This contradicts SPT. Hence
$p^*\ge 2/(\alpha\,\log\,\omega^{-1})$, which completes the proof.
\end{proof} 
\end{itemize}

\subsection{New Notions of Tractability}

We now turn to new notions of tractability which correspond to the
standard notions of tractability for the pair
$(s,1+\log\,\eps^{-1})$ instead of the pair $(s,\eps^{-1})$. To
distinguish between the standard and new notions of tractability,
we add the prefix EC (exponential convergence) when we consider
the new notions. As before, we study the new notions of
tractability for the approximation problem
$\APP=\{\APP_s\}_{s\in\NN}$ for general parameters
$\bsa,\bsb,\bsm$ and $\omega$.
\begin{itemize}
\item {\bf EC-$(t_1,t_2)$-Weak Tractability}

We say that $\APP$ is EC-$(t_1,t_2)$-WT iff
$$
\lim_{s+\eps^{-1}\to\infty} \frac{\log\,n(\eps,\APP_s)}{s^{t_1}+
[\log\,\eps^{-1}]^{t_2}}=0.
$$
Obviously, EC-$(t_1,t_2)$-WT implies $(t_1,t_2)$-WT. For $t_1=1$
and $t_2>1$, this notion was introduced and studied in
\cite{PP14}.

\begin{thm}\label{thm6}

\qquad

\qquad $\APP$ is {\rm EC-$(t_1,t_2)$-WT} for the parameters
$\bsa,\bsb,\bsm$ and $\omega$ \ \ iff \ \ $t_1>1$, 
 or $t_2>1$ and $m_0=1$.
\end{thm}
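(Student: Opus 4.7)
My plan is to mirror the structure of the proof of Theorem~\ref{thm2}, reusing the two upper bounds and the two lower bounds on $n(\eps,\APP_s)$ established there and in Lemma~\ref{lem1}. As in Theorem~\ref{thm2}, the information complexity is maximized when $\bsa$ and $\bsb$ are constant sequences, so any upper bound proved for the constant case applies in general. Throughout the sufficiency argument I set $y=\max(s^{t_1},(\log\eps^{-1})^{t_2})$, so that $s\le y^{1/t_1}$, $\log\eps^{-1}\le y^{1/t_2}$, and $s^{t_1}+(\log\eps^{-1})^{t_2}$ lies between $y$ and $2y$; the goal is always to prove that the numerator $\log n(\eps,\APP_s)$ is $o(y)$.

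For sufficiency I split on the two branches of the condition. If $t_1>1$, I invoke \eqref{generalbound}, which gives $\log n(\eps,\APP_s)=\cO(s\log\log\eps^{-1})$ with implicit constants independent of $s$, $\eps$ and of $\bsa,\bsb$; using $\log\log\eps^{-1}=\cO(\log y)$, the ratio is bounded by $\cO(\log y/y^{1-1/t_1})\to 0$, since $1-1/t_1>0$. If instead $t_2>1$ and $m_0=1$, I combine \eqref{prthm2b} and \eqref{prthm2c} from the same proof to obtain $\log n(\eps,\APP_s)=\cO(\log\eps^{-1}\log s)+\cO(\log\eps^{-1}\log\log\eps^{-1})$; both summands are then at most $\cO(y^{1/t_2}\log y)=o(y)$ because $t_2>1$, and the ratio again tends to zero.

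For necessity I argue the contrapositive in two subcases. When $m_0\ge 2$ and $t_1\le 1$, \eqref{lem1p1} of Lemma~\ref{lem1} yields $\log n(\eps,\APP_s)\ge s\log m_0$, so fixing any $\eps\in(0,1)$ and letting $s\to\infty$ forces the ratio to stay bounded below by $\log m_0>0$ when $t_1=1$ and to diverge when $t_1<1$. When $m_0=1$ and both $t_1\le 1$ and $t_2\le 1$, I exhibit explicit bad parameters: take $\bsa\equiv a_1$ and $\bsb\equiv b_1$, and pick $\eps_s$ with $\log\eps_s^{-1}=s\,a_1\log\omega^{-1}$, so that $x(\eps_s)=2s\,a_1>a_1+\dots+a_s$. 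Then \eqref{lem1p2} of Lemma~\ref{lem1} gives $n(\eps_s,\APP_s)\ge(m_0+m_1)^s\ge 2^s$, while $s^{t_1}+(\log\eps_s^{-1})^{t_2}\le s\bigl(1+(a_1\log\omega^{-1})^{t_2}\bigr)$ since $t_1,t_2\le 1$ and $s\ge 1$. The ratio therefore stays bounded away from zero along $s\to\infty$, contradicting EC-$(t_1,t_2)$-WT. The main obstacle is notational rather than conceptual: consistently managing the auxiliary variable $y$ to convert the uniform-$o$ criterion into clean inequalities, and verifying that the implicit constants in \eqref{generalbound}, \eqref{prthm2b} and \eqref{prthm2c} are genuinely independent of $\bsa$ and $\bsb$, so that the constant-case reduction is legitimate.
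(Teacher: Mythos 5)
Your proposal is correct and follows essentially the same route as the paper's proof: the sufficiency part (reduction to constant $\bsa,\bsb$, the bound \eqref{generalbound} for $t_1>1$, the bounds \eqref{prthm2a}--\eqref{prthm2c} for $t_2>1$ and $m_0=1$, all combined via $y=\max(s^{t_1},[\log\eps^{-1}]^{t_2})$) and the first necessity step (Lemma~\ref{lem1}\eqref{lem1p1} with $\eps$ fixed and $s\to\infty$) coincide with the paper's argument. Your worry about the constants is harmless: they depend on $a_1$, $b_1$, $\omega$ and $\mmax$ but not on $s$ or $\eps$, which is all the argument needs.

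The one place you deviate is the necessity of $t_2>1$ when $m_0=1$ and $t_1\le1$. The paper takes $\eps_s\in(\lambda^{(\lfloor s/2\rfloor+1)/2},\lambda^{\lfloor s/2\rfloor/2})$ with $\lambda=\omega^{a_1}$ and quotes the bound $n(\eps_s,\APP_s)\ge2^{\lfloor s/2\rfloor}$ from \cite{PP14}; you instead take constant $\bsa,\bsb$, choose $\log\eps_s^{-1}=s\,a_1\log\omega^{-1}$, and apply Lemma~\ref{lem1}\eqref{lem1p2} to get $n(\eps_s,\APP_s)\ge2^s$. These are the same idea -- a sequence with $\log\eps_s^{-1}=\Theta(s)$ along which the complexity is exponential in $s$ -- and both computations of the limit are equivalent. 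Your explicit restriction to constant $\bsa,\bsb$ in this step is not a defect but a necessity: by Theorem~\ref{thm7}, EC-$(1,1)$-WT \emph{does} hold when $m_0=1$ and $\lim_j a_j=\infty$, so failure of EC-$(t_1,t_2)$-WT with $t_1,t_2\le1$ can only be established for suitably chosen (e.g.\ bounded) $\bsa$; the paper's lower bound $n(\eps_s,\APP_s)\ge2^{\lfloor s/2\rfloor}$ likewise holds only in that regime, although the paper does not make the restriction explicit. In short, the theorem must be read as a characterization valid uniformly over $\bsa,\bsb,\omega$, and under that reading your proof is complete and, on this point, slightly more careful than the original.
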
  
\begin{proof}
Suppose that $\APP$ is EC-$(t_1,t_2)$-WT for the parameters
$\bsa,\bsb,\bsm$ and $\omega$.
Then for fixed 
$\varepsilon\in (0,1)$ we obtain from \eqref{lem1p1} of Lemma~\ref{lem1} that 
$$
0=\lim_{s \rightarrow \infty} 
\frac{\log n(\varepsilon,\APP_s)}{s^{t_1} + 
[\log \varepsilon^{-1}]^{t_2}} \ge \lim_{s \rightarrow \infty} 
\frac{s \log m_0}{s^{t_1} + 
[\log \varepsilon^{-1}]^{t_2}}=\lim_{s \rightarrow \infty} s^{1-t_1}
\log m_0.
$$ 
Hence, we conclude that $t_1>1$ or that $m_0=1$.
For $t_1\le1$ and $m_0=1$, it 
remains to show that $t_2>1$. As in \cite[p.~609]{PP14} 
we find that for $\varepsilon_s 
\in (\lambda^{(\lfloor s/2\rfloor +1)/2},\lambda^{\lfloor
  s/2\rfloor/2})=:L_s$,
where $\lambda:=\omega^{a_1}$, 
we have $n(\varepsilon_s,\APP_s) \ge 2^{\lfloor s/2\rfloor}$. Then 
\begin{eqnarray*}
0 =\lim_{s \rightarrow \infty \atop \varepsilon_s \in L_s} 
\frac{\log n(\varepsilon_s,\APP_s)}{s^{t_1}+
[\log \varepsilon_s^{-1}]^{t_2}} 
\ge \lim_{s \rightarrow \infty} \frac{\lfloor s/2 \rfloor 
\log 2}{s^{t_1}+ \left(\frac{\lfloor s/2\rfloor +1}{2} 
\log \lambda^{-1} \right)^{t_2}}. 
\end{eqnarray*}
This can only hold 
if $t_2 >1$.

Suppose now that $t_1>1$. {}From \eqref{generalbound} we have for
small $\eps$,
$$
\frac{\log\,n(\eps,\APP_s)}{s^{t_1}+[\log\,\eps^{-1}]^{t_2}}=
\mathcal{O}\left(
\frac{s\,\log\,\log\,\eps^{-1}}
{s^{t_1}+[\log\,\eps^{-1}]^{t_2}}\right).
$$
Let $y=\max(s^{t_1},[\log\,\eps^{-1}]^{t_2})$. Then 
$$
\frac{s\,\log\,\log\,\eps^{-1}}
{s^{t_1}+[\log\,\eps^{-1}]^{t_2}}\le\frac{y^{1/t_1}\,\log\,y}{t_2\,y}.
$$
Clearly, this goes to zero as $s+\eps^{-1}$ approaches infinity
since $t_1>1$ and $t_2>0$. Hence, we have EC-$(t_1,t_2)$-WT, as
claimed.

Suppose now that $t_2>1$ and $m_0=1$. From the proof of Theorem~\ref{thm2}, \eqref{prthm2a}, \eqref{prthm2b} and \eqref{prthm2c} we obtain 
$$\log n(\varepsilon,\APP_s) \le C\log\varepsilon^{-1} \log s + C_3 \log \varepsilon^{-1} \log \log \varepsilon^{-1}$$ with suitable constants $C,C_3>0$. 
Since $t_1>0$ and $t_2>1$ and using the same argument for $y=\max(s^{t_1},[\log\,\eps^{-1}]^{t_2})$ as above,
it follows that $$\lim_{s+\varepsilon^{-1}\rightarrow \infty} \frac{\log n(\varepsilon,\APP_s)}{s^{t_1}+[\log \varepsilon^{-1}]^{t_2}} =0.$$
Hence, we have EC-$(t_1,t_2)$-WT. This completes the proof.
\end{proof}


\item {\bf EC-Weak and EC-Uniform Weak Tractability}

EC-weak tractability (EC-WT) corresponds to EC-$(1,1)$-WT. 
EC-uniform weak tractability (EC-UWT) means that
EC-$(t_1,t_2)$-WT holds for all $t_1,t_2\in(0,1]$.   
Clearly, EC-WT implies WT, and EC-UWT implies UWT.

\begin{thm}\label{thm7}
\qquad
\begin{itemize}
\item
$\APP$ is {\rm EC-WT}  \ \ \ iff \ \ \ \ \ $m_0=1$\ \
\mbox{and}\ \ $\lim_{j\to\infty}a_j=\infty$,
\item
$\APP$ is {\rm EC-UWT}  \ \ \ iff \ \ \ $m_0=1$\ \
\mbox{and}\ \ $\lim_{j\to\infty}\ \frac{\log\,a_j}{\log\,j}=\infty$.
\end{itemize}
\end{thm}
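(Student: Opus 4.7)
The plan is to prove each of the two equivalences in turn, establishing necessity by lower bounds and sufficiency by upper bounds.

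For necessity: In both cases $m_0 = 1$ is forced, since Lemma~\ref{lem1}(i) gives $n(\eps,\APP_s)\ge m_0^s$, so for fixed $\eps$ and $s\to\infty$ the EC-WT ratio tends to $\log m_0$, nonzero unless $m_0=1$. Now assume $m_0=1$. For EC-WT, suppose $a_j\not\to\infty$, so the nondecreasing sequence $(a_j)$ is bounded by some $A<\infty$. For $\bsk\in\{0,1\}^s$ the eigenvalues of $W_s$ include $\omega^{\sum_{j\in T}a_j}$ for all $T\subseteq\{1,\dots,s\}$, and by a complement-symmetry argument at least $2^{s-1}$ of these satisfy $\sum_{j\in T}a_j\le\tfrac{1}{2}\sum_{j=1}^s a_j\le sA/2$. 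Choosing $\eps_s^2=\omega^{sA/2}$ then gives $n(\eps_s,\APP_s)\ge 2^{s-1}$ while $\log\eps_s^{-1}=\Theta(s)$, so the EC-WT ratio is bounded away from zero---contradiction. For EC-UWT, if $\log a_j/\log j\not\to\infty$ there exist $C>0$ and $j_k\uparrow\infty$ with $a_{j_k}\le j_k^C$; running the same subset argument with $s=j_k$ gives $\log n(\eps_k,\APP_s)\ge(s-1)\log 2$ while $\log\eps_k^{-1}=O(s^{C+1})$, so for any $t_1\in(0,1)$ and $t_2\in(0,1/(C+1))$ the denominator $s^{t_1}+(\log\eps_k^{-1})^{t_2}$ is $o(s)$ and EC-$(t_1,t_2)$-WT fails, ruling out EC-UWT.

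For sufficiency of EC-WT (assuming $m_0=1$ and $a_j\to\infty$), the key difficulty is that Lemma~\ref{lem1}(iii) bounds the factors coordinatewise and, in the regime $s\sim\log\eps^{-1}$, yields only $\log n=O(s\log\log\eps^{-1})$, which does not beat $s+\log\eps^{-1}$. I therefore exploit the joint constraint $\sum_j a_jk_j^{b_j}<x(\eps)$ by splitting tuples according to their support $T\subseteq\{1,\dots,s\}$ and bounding the lattice points in each weighted simplex by its Euclidean volume. Taking the representative case $b_j\equiv 1$ and using the AM-GM bound $e_t(x_1,\dots,x_s)\le(\sum_j x_j)^t/t!$ for the elementary symmetric polynomial applied to $1/a_j$, one obtains
\begin{equation*}
n(\eps,\APP_s)\le \sum_{t\ge 0}\frac{(\mmax\,x(\eps))^{t}}{t!}\,e_t\Bigl(\frac{1}{a_1},\dots,\frac{1}{a_s}\Bigr) \le \sum_{t\ge 0}\frac{\bigl(\mmax\,x(\eps)\,S_s\bigr)^{t}}{(t!)^{2}}=I_0\bigl(2\sqrt{\mmax\,x(\eps)\,S_s}\bigr),
\end{equation*}
with $S_s=\sum_{j=1}^s a_j^{-1}$, so $\log n\lesssim\sqrt{x(\eps)\,S_s}$; for general $b_j\ge b_0>0$ the same idea produces an analogous bound of the form $\log n\lesssim[x(\eps)\,S_s^{(b_0)}]^{b_0/(1+b_0)}$, where $S_s^{(b_0)}=\sum_j a_j^{-1/b_0}$. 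Now $a_j\to\infty$ gives $S_s^{(b_0)}/s\to 0$: for each $\delta>0$ there is $J(\delta)$ with $a_j^{-1/b_0}<\delta$ for $j>J(\delta)$, so $S_s^{(b_0)}\le C_1(\delta)+\delta s$. Inserting this, using $x(\eps)=\Theta(\log\eps^{-1})\le s+\log\eps^{-1}$, and applying weighted AM-GM $u^p v^{1-p}\le pu+(1-p)v$ with $p=1/(1+b_0)$, the ratio $\log n/(s+\log\eps^{-1})$ is bounded by $o(1)+O(\delta^{b_0/(1+b_0)})$; letting $\delta\to 0$ gives EC-WT.

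For sufficiency of EC-UWT (assuming $m_0=1$ and $\log a_j/\log j\to\infty$), the hypothesis is much stronger: for every $C>0$ eventually $a_j>j^C$, whence
\begin{equation*}
j(\eps)=\max\{\,j:a_j<x(\eps)\,\}=O\bigl((\log\eps^{-1})^{1/C}\bigr)\qquad\text{as }\eps\to 0,
\end{equation*}
so $j(\eps)=O((\log\eps^{-1})^{\alpha})$ for every $\alpha>0$. With this, the coordinatewise Lemma~\ref{lem1}(iii) bound suffices:
\begin{equation*}
\log n(\eps,\APP_s)\le \min(s,j(\eps))\bigl(\log(1+\mmax)+b_0^{-1}\log x(\eps)\bigr)=O\bigl(\min(s,j(\eps))\,\log\log\eps^{-1}\bigr).
\end{equation*}
Given $t_1,t_2\in(0,1]$, pick $\alpha<t_2$. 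If $s\ge j(\eps)$, the numerator is $O((\log\eps^{-1})^\alpha\log\log\eps^{-1})$ and the denominator is at least $(\log\eps^{-1})^{t_2}$, so the ratio vanishes. If $s<j(\eps)$, then $s\le(\log\eps^{-1})^\alpha$, and since $t_1\le 1$ also $s^{t_1}\le(\log\eps^{-1})^{\alpha t_1}\le(\log\eps^{-1})^{t_2}$, so the denominator is again dominated by $(\log\eps^{-1})^{t_2}$ and the ratio still vanishes. Hence EC-$(t_1,t_2)$-WT holds for all $t_1,t_2\in(0,1]$, i.e., EC-UWT. The main obstacle throughout is establishing the sufficiency of EC-WT, which forces one to go beyond the coordinatewise bound of Lemma~\ref{lem1}(iii) and exploit the joint simplex constraint via the volume/Bessel-type estimate described above.
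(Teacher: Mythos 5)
Your proof is correct, and both necessity arguments as well as the EC-UWT sufficiency argument run essentially parallel to the paper's: the paper derives the lower bound $(1+m_1)^s$ from part (ii) of Lemma~\ref{lem1} by taking $x(\eps)>(1+\delta)(a_1+\cdots+a_s)$ (your half-sum subset count is a minor variant; note only that with $\eps_s^2=\omega^{sA/2}$ the boundary subsets give equality, so you should shift $\eps_s$ slightly to keep the strict inequality $\sum_{j\in T}a_j<x(\eps_s)$), and its EC-UWT upper bound is exactly your $j(\eps)=\cO([\log\eps^{-1}]^{1/\tau})$ argument via part (iii) of Lemma~\ref{lem1}. Where you genuinely diverge is the sufficiency of EC-WT. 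The paper avoids lattice-point counting entirely: from $n\lambda_{s,n}^{\eta}\le\sum_{k}\lambda_{s,k}^{\eta}=\prod_{j=1}^{s}\bigl(1+\sum_{k\ge1}m_k\,\omega^{\eta a_j k^{b_j}}\bigr)\le\prod_{j=1}^{s}(1+C_\eta\,\omega^{\eta a_j})$ it gets $\log n(\eps,\APP_s)\le 2\eta\log\eps^{-1}+C_\eta\sum_{j=1}^{s}\omega^{\eta a_j}$, and since $a_j\to\infty$ forces $\sum_{j=1}^{s}\omega^{\eta a_j}=o(s)$, the EC-WT ratio has $\limsup$ at most $2\eta$ for every $\eta>0$. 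This trace estimate handles arbitrary $b_j$ with no case distinction and is much shorter than your simplex-volume/Bessel computation, though your route yields a sharper quantitative bound in the critical regime $s\sim\log\eps^{-1}$. Two points in your version need tightening. First, the general-$b$ bound should read $x(\eps)^{1/(1+b_0)}\,[S_s^{(b_0)}]^{b_0/(1+b_0)}$ rather than $[x(\eps)\,S_s^{(b_0)}]^{b_0/(1+b_0)}$: as written the exponents do not sum to one, your weighted AM--GM with $p=1/(1+b_0)$ does not apply, and for $b_0>1$ the quantity $(x\delta s)^{b_0/(1+b_0)}$ is not $O(x+s)$, so the literal statement would break; the corrected form does follow from your volume argument and makes the AM--GM step work. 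Second, you should say how non-constant exponents $b_j\ge b_0$ are reduced to the constant case --- the easy observation $k^{b_j}\ge k^{b_0}$ for $k\ge1$ (the same monotonicity reduction the paper performs in the proof of Theorem~\ref{thm2}) lets you replace every $b_j$ by $b_0$, after which your constant-exponent Dirichlet-volume computation applies verbatim.
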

\begin{proof}
We first assume that EC-WT or EC-UWT holds. 
Since EC-WT implies WT and EC-UWT implies UWT, Theorem~\ref{thm3}
implies that $m_0=1$. For $t_1,t_2\in(0,1]$, let
$$ 
z_{s,t_1,t_2}=\frac{\log\,n(\eps,\APP_s)}{s^{\,t_1}+[\log\,\eps^{-1}]^{t_2}}.
$$
Let
$\delta>0$ and take $x(\eps)=(1+\delta)(a_1+\cdots+a_s)$. Due to
the definition \eqref{functionx} of $x(\eps)$ this means that
$$
\log\,\eps^{-1}=\frac{\log\,\omega^{-1}}{2}\,(1+\delta)\,(a_1+a_2+\cdots+a_s).
$$
{}From \eqref{lem1p2} of Lemma~\ref{lem1} we have
$$
z_{s,t_1,t_2}
\ge
\frac{s\,\log(1+m_1)}{s^{\,t_1}+[\log\,\eps^{-1}]^{t_2}}\ge\frac{\log\,2}
{s^{\,t_1-1}+ [\tfrac12(1+\delta)\,\log\,\omega^{-1}]^{t_2}y_s},
$$
where
$$
y_s=\frac{(a_1+a_2+\cdots+a_s)^{t_2}}{s}\le \frac{(s\,a_s)^{t_2}}{s}
=\frac{a_s^{t_2}}{s^{\,1-t_2}}.
$$
Then $\lim_sz_{s,t_1,t_2}=0$ implies that $\lim_sy_s=\infty$, which in
turn implies that
\begin{equation}\label{fun}
\lim_{s\to\infty}\frac{a_s^{t_2}}{s^{\,1-t_2}}=
\lim_{s\to\infty}\frac{a_s}{s^{\,t_2^{-1}-1}}=\infty.
\end{equation}
If we have EC-WT then $\lim_sz_{s,1,1}=0$ and 
$\lim_ja_j=\infty$, as claimed. If we have EC-UWT then, in particular,
$\lim_sz_{s,1,t_2}=0$ for all positive $t_2$. Then~\eqref{fun} yields
there is a number
$s^*=s^*(t_2)$ such that 
$$
a_s\ge s^{\,t_2^{-1}-1}\ \ \ \ \mbox{for all}\ \ s\ge s^*(t_2),
$$
or equivalently
$$
\frac{\log\,a_s}{\log\,s}\ge \frac{1-t_2}{t_2}
\ \ \ \ \mbox{for all}\ \ s\ge s^*(t_2).
$$
Since $t_2$ can be arbitrarily close to zero this implies that
$$
\lim_{s\rightarrow \infty}\frac{\log\,a_s}{\log\,s}=\infty,
$$
as claimed.  
 
We now prove that $m_0=1$ and $\lim_ja_j=\infty$ imply EC-WT.
For any positive $\eta$ we compute the $\eta$ powers of the
eigenvalues $\lambda_{s,k}$ of the operator $W_s$. We have
$$
n\lambda_{s,n}^{\eta}\le\sum_{k=1}^\infty\lambda_{s,k}^{\eta}
=\prod_{j=1}^s\left(1+\sum_{k=1}^\infty
  m_k\,\omega^{\eta\,a_j\,k^{b_j}}\right)
\le \prod_{j=1}^s\left(1+ C_\eta\,\omega^{\eta\,a_j}\right),
$$
due to \eqref{ccc}. Hence,
$$
\lambda_{s,n}\le \frac{\prod_{j=1}^s\left(1+
  C_{\eta}\,\omega^{\eta\,a_j}\right)^{1/\eta}}{n^{1/\eta}}.
$$
Then \eqref{9} yields
$$
n(\eps,\APP_s)\le \frac{\prod_{j=1}^s\left(1+
  C_{\eta}\,\omega^{\eta\,a_j}\right)}{\eps^{2\eta}}.
$$
Since $\log(1+x)\le x$ for positive $x$, we conclude
$$
\log\,n(\eps,\APP_s)\le 2\eta\,\log\,\eps^{-1}\ +
C_{\eta}\,\sum_{j=1}^s\omega^{\eta\,a_j}.
$$
Observe that $\lim_ja_j=\infty$ implies
$\lim_j\omega^{\eta\,a_j}=0$ and
$\lim_s\sum_{j=1}^s\omega^{\eta\,a_j}/s=0$. Therefore
$$
\limsup_{s+\eps^{-1}\to \infty}
\frac{\log\,n(\eps,\APP_s)}{s+\log\,\eps^{-1}}\le 2\eta.
$$
Since $\eta$ can be arbitrarily small this proves that
$$
\lim_{s+\eps^{-1}\to \infty}
\frac{\log\,n(\eps,\APP_s)}{s+\log\,\eps^{-1}}=0.
$$
Hence EC-WT holds. 

Finally, we prove that $m_0=1$ and $\lim_s(\log\,a_s)/\log\,s=\infty$
imply EC-UWT. 
For $x(\eps)>a_1$, \eqref{lem1p3} of Lemma~\ref{lem1}  yields
$$
\log\,n(\eps,\APP_s)\le
\sum_{j=1}^{j(\eps)}\log\left(1+
\mmax\left(\frac{x(\eps)}{a_j}
\right)^{1/b_j}\right).
$$
Let $b_0=\inf_jb_j>0$ and 
$$
\alpha=\mmax\left(\frac2{a_1\,\log\,\omega^{-1}}\right)^{1/b_0}.
$$
{}From the definition \eqref{functionx} of $x(\eps)$ we then have
$$
\mmax\left(\frac{x(\eps)}{a_j}
\right)^{1/b_j}\le \alpha [\log\,\eps^{-1}]^{1/b_0},
$$
and
$$
\log\,n(\eps,\APP_s)
\le j(\e)\,\log\left(1+\alpha
[\log\,\eps^{-1}]^{1/b_0}\right)=\mathcal{O}
\left(j(\eps)\,\log\,\log\,\eps^{-1}\right).
$$
We now estimate $j(\eps)$ using the assumption that
$\lim_j(\log\,a_j)/\log\,j=\infty$. We know that for all positive
$\tau$ there is a number $j_{\tau}$ such that 
$$
a_j\ge j^{\tau}\ \ \ \ \mbox{for all}\ \ \ j\ge j_{\tau}.
$$
This implies that 
$$
j(\eps)\le \max\left(j_{\tau}-1, x(\eps)^{1/\tau}\right)=
\mathcal{O}\left([\log\,\eps^{-1}]^{1/\tau}\right).
$$
Therefore
$$
\log\,n(\eps,\APP_s)=\mathcal{O}\left([\log\,\eps^{-1}]^{1/\tau}\,
\log\,\log\,\eps^{-1}\right).
$$
We stress that the factors in the big $\mathcal{O}$ notation do not
depend on $s$. Then for any positive $t_1,t_2\in(0,1]$ we take 
$\tau>1/t_2$, and conclude 
$$
\lim_{s+\eps^{-1}\to\infty}\frac{\log\,n(\eps,\APP_s)}{s^{\,t_1}
+[\log\,\eps^{-1}]^{t_2}}=0.
$$
This proves EC-UWT, and completes the proof.
\end{proof}

If we compare Theorems~\ref{3} and~\ref{7}, we see that the
assumption $m_0=1$ is always needed. However, WT
holds for all $\bsa=\{a_j\}_{j\in\NN}$, whereas EC-WT requires that
$\lim_ja_j=\infty$. 
Similarly, UWT holds for all $\bsa=\{a_j\}_{j\in\NN}$,
whereas EC-UWT requires that $\lim_j(\log\,a_j)/\log\,j=\infty$.
Hence, $a_j$'s may go to infinity arbitrarily slowly for EC-WT,
whereas they must go to infinity faster than polynomially to get
EC-UWT.  
It seems interesting that WT, UWT, EC-WT and EC-UWT do not
depend on $\bsb$,~$\bsm$ (with $m_0=1$) and $\omega$.

\item {\bf EC-Quasi-Polynomial Tractability}

$\APP$ is EC-QPT if there are positive $C$ and $t$ such that
$$
n(\eps,\APP_s)\le
C\,\exp\left(t\,(1+\log\,s)(1+\log\,(1+\log\,\eps^{-1}))\right) \
\ \ \ \mbox{for all}\ \ s\in\NN,\ \eps\in(0,1).
$$
The infimum of $t$ satisfying the bound above is denoted by $t^*$,
and is called the exponent of EC-QPT. Obviously, EC-QPT implies
EC-WT.

Observe that
\begin{eqnarray*}
\exp\left(t\,(1+\log\,s)(1+\log\,(1+\log\,\eps^{-1}))\right)&=&
[{\rm e}\,s]^{\ t\,(1+\log\,(1+\log\,\eps^{-1}))}\\
&=&[{\rm e}(1+\log\,\eps^{-1})]^{\ t\,(1+\log\,s)}.
\end{eqnarray*}
We will sometimes use these equivalent formulations to establish
EC-QPT.

\begin{thm}\label{thm8}

\qquad

\quad $\APP$ is {\rm EC-QPT}\ \ iff \ \
$$
m_0=1,\ \ \ B^*:=\sup_{s\in \NN}\
\frac{\sum_{j=1}^sb_j^{-1}}{1+\log\,s}<\infty,\ \ \mbox{ and }\
\alpha:=\liminf_{j\to\infty}\ \frac{(1+\log\,j)\,\log\,a_j}{j}>0.
$$ 
If this holds then the exponent of {\rm EC-QPT} satisfies
$$
t^*\in \left[\max\left(B^*,\frac{\log\,(1+m_1)}{\alpha}\right),
B^*+\frac{\log\,(1+\mmax)}{\alpha}\right].
$$
In particular, if $\alpha=\infty$ then $t^*=B^*$. 
\end{thm}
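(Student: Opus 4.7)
The plan is to prove the three necessity conditions (with the matching lower bounds on $t^*$) using the lower bounds in Lemma~\ref{lem1}, and to prove sufficiency together with the upper bound on $t^*$ using the upper bound in Lemma~\ref{lem1}(iii).

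For necessity, the equality $m_0=1$ is automatic since EC-QPT implies EC-WT and Theorem~\ref{thm7} already requires it. To extract $t^*\ge B^*$ (and in particular $B^*<\infty$), fix $s\in\NN$ and let $\eps\to 0$. Lemma~\ref{lem1}(iv) with $\alpha_j=(j-1)/j$ gives, once $j(\eps)\ge s$ (which eventually holds since $\lim_j a_j=\infty$ by EC-WT),
\[
n(\eps,\APP_s)\ge \prod_{j=1}^{s}\left\lceil\left(\frac{x(\eps)}{a_j s}\right)^{1/b_j}\right\rceil\ge c(s)\,(\log\eps^{-1})^{B_s}
\]
for some positive $c(s)$, where $B_s=\sum_{j=1}^s b_j^{-1}$. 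Comparing with the EC-QPT upper bound $\log n(\eps,\APP_s)\le \log C+t(1+\log s)(1+\log\log\eps^{-1})$, dividing by $\log\log\eps^{-1}$, and sending $\eps\to 0$ yields $B_s\le t(1+\log s)$ for every $s$, hence $B^*\le t$ and so $t^*\ge B^*$. To extract $t^*\ge\log(1+m_1)/\alpha$ (and $\alpha>0$), apply Lemma~\ref{lem1}(ii): for $x(\eps)>\sum_{j=1}^s a_j$ one has $n(\eps,\APP_s)\ge (1+m_1)^s$. Choosing $\log\eps^{-1}$ just above $(\log\omega^{-1}/2)\sum_{j=1}^s a_j\le (\log\omega^{-1}/2)\,s\,a_s$, the EC-QPT hypothesis forces
\[
s\log(1+m_1)\le \log C+t(1+\log s)\bigl(1+\log s+\log a_s+O(1)\bigr),
\]
which rearranges to $(1+\log s)\log a_s/s\ge \log(1+m_1)/t-o(1)$. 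Taking $\liminf_s$ and then the infimum over admissible $t$ gives $\alpha\ge\log(1+m_1)/t^*$, so in particular $\alpha>0$ and $t^*\ge\log(1+m_1)/\alpha$.

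For sufficiency and the bound $t^*\le B^*+\log(1+\mmax)/\alpha$, I bound $n(\eps,\APP_s)$ from above using Lemma~\ref{lem1}(iii). With $m_0=1$ and writing $y_j=(x(\eps)/a_j)^{1/b_j}\ge 1$ for $j\le j(\eps)$, each factor satisfies $\log(1+\mmax(\lceil y_j\rceil-1))\le \log(1+\mmax\,y_j)\le \log(1+\mmax)+\log y_j$. Summing,
\[
\log n(\eps,\APP_s)\le \min(s,j(\eps))\log(1+\mmax)+\log x(\eps)\sum_{j=1}^{\min(s,j(\eps))}\frac{1}{b_j}-\sum_{j=1}^{\min(s,j(\eps))}\frac{\log a_j}{b_j}.
\]
The middle term is at most $B^*(1+\log s)\log x(\eps)=B^*(1+\log s)(1+\log\log\eps^{-1})(1+o(1))$, and the last term is a lower-order correction bounded by $|\log a_1|\,B^*(1+\log s)$. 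For the first term, fix any $\alpha'\in(0,\alpha)$; there is $j_0$ with $\log a_j\ge \alpha' j/(1+\log j)$ for $j\ge j_0$. When $j(\eps)\ge j_0$, $\log x(\eps)>\log a_{\min(s,j(\eps))}\ge\alpha'\min(s,j(\eps))/(1+\log\min(s,j(\eps)))$, and $\log\min(s,j(\eps))\le\log s$ yields $\min(s,j(\eps))\le (1+\log s)\log x(\eps)/\alpha'$. Combining, EC-QPT holds with exponent $B^*+\log(1+\mmax)/\alpha'$, and letting $\alpha'\to\alpha$ gives $t^*\le B^*+\log(1+\mmax)/\alpha$; when $\alpha=\infty$ the added term vanishes and the matching lower bound forces $t^*=B^*$.

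The main obstacle will be cleanly handling the implicit inequality $j(\eps)/(1+\log j(\eps))\le \log x(\eps)/\alpha'$: the replacement $\log j(\eps)\le \log s$ is valid only when $j(\eps)\le s$, and the case $j(\eps)>s$ must be treated separately by noting $\min(s,j(\eps))=s$ and verifying the bound $s\le(1+\log s)\log x(\eps)/\alpha'$ via $\log x(\eps)>\log a_s\ge\alpha' s/(1+\log s)$ (valid for $s\ge j_0$). Once this dichotomy is settled, the remaining steps are routine bookkeeping with the three summands identified above.
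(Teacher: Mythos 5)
Your proposal follows the paper's overall skeleton (lower bounds from Lemma~\ref{lem1}\eqref{lem1p1}, \eqref{lem1p2}, \eqref{lem1p4} for necessity; the upper bound of Lemma~\ref{lem1}\eqref{lem1p3} for sufficiency), but two steps are executed differently. For the necessity of $B^*<\infty$ you compare the EC-QPT bound directly with $n(\eps,\APP_s)\ge c(s)[\log\eps^{-1}]^{B_s}$ from Lemma~\ref{lem1}\eqref{lem1p4}; the paper instead passes through the exponent of EXP from Theorem~\ref{thm1}. These are equivalent in substance, since Theorem~\ref{thm1} itself rests on that lower bound. More interestingly, your treatment of the factor $(1+\mmax)^{\min(s,j(\eps))}$ is genuinely simpler than the paper's: writing $M=\min(s,j(\eps))$, you use $\log x(\eps)>\log a_M\ge \alpha' M/(1+\log M)$ and then $M\le (1+\log M)\log x(\eps)/\alpha'\le (1+\log s)\log x(\eps)/\alpha'$ directly from $M\le s$. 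The paper instead introduces the implicit solution $J(\eps)$ of $\log x(\eps)/\delta = J/(1+\log J)$, derives its asymptotics, imposes the auxiliary conditions \eqref{eps1}--\eqref{eps3}, and splits into the cases $s\le J(\eps)$ and $s>J(\eps)$. Your monotonicity argument reaches the same bound and handles both cases at once; the dichotomy you worry about at the end dissolves because $1+\log M\le 1+\log s$ holds for the minimum regardless of which of $s$, $j(\eps)$ is larger.

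One slip to fix in the exponent bound: you control the correction term $-\sum_{j=1}^{M}(\log a_j)/b_j$ by $|\log a_1|\,B^*(1+\log s)$. That inequality is true, but the bound grows with $s$, so if $a_1<1$ it adds $|\log a_1|\,B^*$ to the exponent and you would only obtain $t^*\le B^*\bigl(1+|\log a_1|\bigr)+\log(1+\mmax)/\alpha$ rather than the claimed upper bound (for establishing EC-QPT itself this is harmless). The repair is the paper's observation about $C_{s,\eps}$: since $\alpha>0$ forces $a_j\to\infty$, only finitely many indices satisfy $a_j<1$, hence
$$
-\sum_{j=1}^{M}\frac{\log a_j}{b_j}\ \le\ \#\{j\,:\,a_j<1\}\cdot\frac{|\log a_1|}{\inf_j b_j},
$$
a constant independent of $s$ and $\eps$ that is absorbed into $C$. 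With that correction your argument delivers exactly the stated interval for $t^*$.
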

\begin{proof}
We first prove that EC-QPT implies the conditions on $m_0,B^*$ and
$\alpha$. Since EC-QPT yields EC-WT, we have $m_0=1$. To prove
that $B^*<\infty$, we relate EC-QPT to EXP. {}From
$$
n=n(\eps,\APP_s)\le
C\,\exp\left(t\,(1+\log\,s)(1+\log\,(1+\log\,\eps^{-1}))\right)
$$
we conclude that
$$
e(n,\APP_s)\le \eps\le {\rm e}\cdot\exp\left(-\frac1{{\rm
e}}\left(\frac{n}{C}\right)^{(t(1+\log\,s))^{-1}}\right).
$$
Due to Theorem~\ref{thm1}, EXP holds with the exponent
$1/B_s=1/\sum_{j=1}^sb_j^{-1}$. Therefore $1/(t(1+\log\,s))\le
1/B_s$ and
$$
\frac{B_s}{1+\log\,s}\le t\ \ \ \ \mbox{for all}\ \ s\in\NN.
$$
Hence, $B^*<\infty$ and $t\ge B^*$, as claimed.

To prove that $\alpha>0$, we proceed similarly as 
for EC-WT and EC-UWT.
That
is, for a positive $\delta$, we take
$$
x(\eps)=(1+\delta)(a_1+\dots+a_s)\le (1+\delta)\,s\,a_s.
$$
Now \eqref{lem1p2} of Lemma~\ref{lem1} yields
$$
s\,\log\,(1+m_1)\le\,\log\,n(\eps,\APP_s)\le \log\,C\ +\
t(1+\log\,s)\,(1+\log\,(1+\log\,\eps^{-1})),
$$
where
$$
1+\log\,(1+\log\,\eps^{-1}))\le 1+\log\,
\left(1+\frac{(1+\delta)\log\,\omega^{-1}}2\,s\,a_s\right).
$$
For large $s$, this proves that
$$
\frac{t\,(1+\log\,s)\,\log\,a_s}{s}\ge \log\,(1+m_1)\ +\ o(s).
$$
Hence, $\alpha>0$ and $t\ge \log(1+m_1)/\alpha$, as claimed.

We now prove that $m_0=1$, $B^*<\infty$ and $\alpha>0$ imply
EC-QPT.

{}From $m_0=1$ and Lemma~\ref{lem1} we have $n(\eps,\APP_s)=1$ for
$x(\eps)\le a_1$, whereas for $x(\eps)>a_1$, we have
\begin{eqnarray*}
n(\eps,\APP_s)&\le&\prod_{j=1}^{\min(s,j(\eps))}\left(1+\mmax\left(
\frac{x(\eps)}{a_j}\right)^{1/b_j}\right)\\
&\le&(1+\mmax)^{\min(s,j(\eps))}\,
\prod_{j=1}^{\min(s,j(\eps))}\left(\frac{x(\eps)}{a_j}\right)^{1/b_j}.
\end{eqnarray*}
{}From the definition  \eqref{functionx} of $x(\eps)$, we get
\begin{equation}\label{1212}
n(\eps,\APP_s)\le (1+\mmax)^{\min(s,j(\eps))}\, C_{s,\eps}\, [{\rm
e}\,\log\,\eps^{-1}]^{\, \sum_{j=1}^{\min(s,j(\eps))}b_j^{-1}},
\end{equation}
where
$$
C_{s,\eps}=\prod_{j=1}^{\min(s,j(\eps))} \left(\frac{2}{a_j\,{\rm
e}\,\log\,\omega^{-1}}\right)^{1/b_j}.
$$
Note that \eqref{1212} holds for all $s\in\NN$ and all
$\eps\in(0,1)$ if we take $j(\eps)=0$ for $x(\eps)\le a_1$.

We now use the assumption that $\alpha>0$. This means that for any
$\delta\in(0,\alpha)$ there is an integer~$j_\delta$ such that
$$
a_j\ge \exp\left(\frac{\delta\,j}{1+\log\,j}\right)\ \ \ \
\mbox{for all}\ \ j\ge j_\delta.
$$
This means that $\lim_ja_j=\infty$, and this convergence is almost
exponential in~$j$.

We turn to $j(\eps)$ defined by~\eqref{13}. Now $j(\eps)$ goes to
infinity  as $\eps$ approaches zero. For $\log\,x(\eps)\ge
\delta$, i.e., for $\eps\le \omega^{{\rm e}^\delta/2}$, we have
$$
j(\eps)\le \max(j_\delta, J(\eps)),
$$
where $J(\eps)$ is a solution of the nonlinear equation
\begin{equation}\label{eqJeps}
\frac{\log\,x(\eps)}{\delta}=\frac{J(\eps)}{1+\log\,J(\eps)}.
\end{equation}
The solution is unique since the function $y/(1+\log\,y)$ is
increasing for $y\ge1$.

Let $a(\varepsilon)=(\log x(\eps))/\delta$. Then we have from \eqref{eqJeps} that $J(\eps)=a(\eps)(1+\log J(\eps))$. 
Now we write $J(\eps)$ in the form $$J(\eps)=(1+f(\eps)) a(\eps) \log a(\eps),$$ 
where $f(\eps)$ is given by 
$$f(\eps)=\frac{1+\log(1+\log J(\eps))}{\log a(\eps)}=\frac{1+\frac{1}{\log(1+\log J(\eps))}}{\frac{\log J(\eps)}{\log(1+\log J(\eps))}-1}=o(1)\ \ \mbox{ for }\ \eps \rightarrow 0.$$ 
Hence we have
\begin{eqnarray}\label{1213}
J(\eps)& =& (1+o(1)) a(\eps) \log a(\eps) \nonumber \\ 
       & =& \frac{1+o(1)}{\delta}\,[\log\,x(\eps)]\ \log\,\frac{\log\, x(\eps)}{\delta}\nonumber \\
       & =& \frac{1+o(1)}{\delta}\,[\log\,\log\,\eps^{-1}]\ \log\,\log\,\log\,\eps^{-1}.
\end{eqnarray}

We turn to \eqref{1212}. Note that $\lim_ja_j=\infty$ implies that
only a finite number of factors in $C_{s,\eps}$ is larger than
one. Therefore
$$
C_{s,\eps}\le C_1:=\sup_{s\in\NN,\ \eps\in(0,1)}
\prod_{j=1}^{\min(s,j(\eps))} \left(\frac{2}{a_j\,{\rm
e}\,\log\,\omega^{-1}}\right)^{1/b_j} <\infty.
$$
Furthermore, from the assumption $B^*<\infty$ we have
$$
\sum_{j=1}^{\min(s,j(\eps))}b_j^{-1}=
\frac{\sum_{j=1}^{\min(s,j(\eps))}b_j^{-1}}{1+\log\,s}\,(1+\log\,s)\le
  B^*\,(1+\log\,s).
$$
Therefore we can rewrite~\eqref{1212} as
\begin{equation}\label{1214}
n(\eps,\APP_s)\le (1+\mmax)^{\min(s,j(\eps))}\,C_1\,[{\rm
e}\,(1+\log\,\eps^{-1})]^{B^*\,(1+\log\,s)}.
\end{equation}

We now analyze the first factor $\beta:=(1+\mmax)^{\min(s,j(\eps))}$
in~\eqref{1214}. Let $s^*\in\NN$ and $\eps^*\in(0,1)$ be
arbitrary. Note that for $s\le s^*$ or for $\eps\in[\eps^*,1]$ we
have
$$
\beta \le (1+\mmax)^{\max(s^*,\,j(\eps^*))}=:
C_2<\infty.
$$
Hence, without loss of generality we can consider
$$
s>s^*\ \ \ \mbox{and}\ \ \ \eps\in(0,\eps^*).
$$
We now choose $s^*$ such that $s^*\ge j_{\delta}$. For any
positive $\eta\in(0,1)$ we choose a positive $\eps^*$ such that
for all $\eps\in(0,\eps^*)$ we have
\begin{eqnarray}
\log\,\log\,\log\,\eps^{-1}&\ge&\frac{\delta}{1-\eta},\label{eps1}\\
\frac{\delta\,J(\eps)} {\log\,\log\,\eps^{-1}\
\log\,\log\,\log\,\eps^{-1}}
&\in&[1-\eta,1+\eta],\label{eps2}\\
\frac{(1+\eta)}
{\delta\left(1+\frac{\log(1-\eta)/\delta\,+\,\log\,\log\,\log\,\log\,\eps^{-1}}
{\log\,\log\,\log\,\eps^{-1}}\right)}&\le&
\frac{1+2\eta}{\delta}.\,\label{eps3}
\end{eqnarray}
Observe that such a positive $\eps^*$ exists since~\eqref{eps1}
clearly holds for small $\eps$, whereas~\eqref{eps2} holds due
to~\eqref{1213}, and~\eqref{eps3} holds since the limit of the
left hand side, as $\eps\to\ 0$, is $(1+\eta)/\delta$ which is smaller than the
right hand side.

We are ready to estimate
$$
\beta=[{\rm e}\,s]^{y_{s,\eps}}= [{\rm
e}\,(1+\log\,\eps^{-1})]^{z_{s,\e}},
$$
where
\begin{eqnarray*}
y_{s,\eps}&=& \frac{\min(s,j(\eps))\,\log\,(1+\mmax)}{\log\,({\rm e} s)},\\
z_{s,\eps}&=& \frac{\min(s,j(\eps))\,\log\,(1+\mmax)}{\log\,({\rm
e}\,(1+\log\,\eps^{-1}))}.
\end{eqnarray*}

We consider two cases depending on 
whether $s$ or $J(\eps)$ is
larger.

{\bf Case 1.} Assume that $s\le J(\eps)$.

Note that the function $y/(1+\log\,y)$ is an increasing function
of $y\in[1,\infty)$. Therefore
$$
\frac{s}{1+\log\,s}\le \frac{J(\eps)}{1+\log\, J(\eps)}.
$$
Due to~\eqref{eps2} and~\eqref{eps3},
\begin{eqnarray*}
\frac{J(\eps)}{1+\log\, J(\eps)}&\le&
\frac{(1+\eta)\,\log\,\log\,\eps^{-1}\
\log\,\log\,\log\,\eps^{-1}}
{\delta(1+\log\frac{1-\eta}{\delta}+\log\,\log\,\log\,\eps^{-1}+
\log\,\log\,\log\,\log\,\eps^{-1})}\\
&\le&\frac{1+2\eta}{\delta}\,\log\,\log\,\eps^{-1}\le
 \frac{1+2\eta}{\delta}\,(1+\log\,(1+\log\,\eps^{-1})).
\end{eqnarray*}
Hence,
\begin{eqnarray*}
y_{s,\eps}&\le& \frac{s}{1+\log\,s}\,\log\,(1+\mmax)\le
\frac{J(\eps)}{1+\log\,J(\eps)}\,\log\,(1+\mmax)\\
&\le&
\frac{(1+2\eta)\,\log(1+\mmax)}{\delta}\,\left(1+\log(1+\log\,\eps^{-1})\right).
\end{eqnarray*}
This yields
$$
\beta\le [{\rm e}
s]^{\delta^{-1}(1+2\eta)\log\,(1+\mmax)\,(1+\log\,(1+\log\,\eps^{-1}))}
$$
which can be equivalently written as
$$
\beta \le \exp\left(
\left[\frac{1+2\eta}{\delta}\,\log(1+\mmax)\right]\,
(1+\log\,s)(1+\log\,(1+\log\,\eps^{-1}))\right).
$$
This and~\eqref{1214} yield EC-QPT with $t\le
B^*+\delta^{-1}(1+2\eta)\,\log\,(1+\mmax)$. Since $\delta$ can be
arbitrarily close to $\alpha$ and $\eta$ can be arbitrarily small,
we conclude that the exponent of EC-QPT in this case satisfies
$$
t\le B^*+\frac{\log\,(1+\mmax)}{\alpha}.
$$

{\bf  Case 2.}  Assume that $s>J(\eps)$.

Then $s>\delta^{-1}\,(1-\eta)\,[\log\,\log\,\eps^{-1}]\
\log\,\log\,\log\,\eps^{-1}\ge \log\,\log\,\eps^{-1}$ due
to~\eqref{eps1} and~\eqref{eps2}. Hence, $\log\,s\ge
\log\,\log\,\log\,\eps^{-1}$. We now estimate $z_{s,\eps}$. Assume that $\eps>0$ is small enough such that $j(\eps)\le J(\eps)$. Then we
have
\begin{eqnarray*}
z_{s,\eps}&\le&\frac{j(\eps)\,\log\,(1+\mmax)}
{1+\log\,(1+\log\,\eps^{-1})}\le \frac{J(\eps)\,\log\,(1+\mmax)}
{1+\log\,(1+\log\,\eps^{-1})}\\
&\le&\frac{(1+\eta)\,\log\,(1+\mmax)}{\delta}\,
\frac{\log\,\log\,\eps^{-1}\
\log\,\log\,\log\,\eps^{-1}}{1+\log\,(1+\log\,\eps^{-1})}\\
&\le& \frac{(1+\eta)\,\log\,(1+\mmax)}{\delta}\,
\log\,\log\,\log\,\eps^{-1}\\
&\le&
 \frac{(1+\eta)\,\log\,(1+\mmax)}{\delta}\,(1+\log\,s).
\end{eqnarray*}
Hence we have
$$
\beta\le\exp\left(\left[\frac{1+\eta}{\delta}\,
\log\,(1+\mmax)\right]\,(1+\log\,s)(1+\log\,(1+\log\,\eps^{-1}))\right),
$$
and the rest of the proof goes like in Case 1. This completes the
proof.
\end{proof}

We compare Theorems \ref{thm4} and \ref{thm8}. The assumption
$m_0=1$ is needed for both QPT and EC-QPT. However, QPT holds for
all $\bsa$ and $\bsb$, whereas for EC-QPT we need to assume that
$B^*<\infty$ and $\alpha>0$. This means that $b_j$ must go to
infinity roughly at least 
like $j$, and $a_j$ must go to infinity almost
exponentially fast.

\item {\bf EC-Polynomial and EC-Strong Polynomial Tractability}

$\APP$ is EC-PT iff there are positive $C,p$ and $q\ge0$ such that
$$
n(\eps,\APP_s)\le C\,s^{\,q}\,(1+\log\,\eps^{-1})^{p} \ \ \ \
\mbox{for all}\ \ s\in\NN,\ \eps\in(0,1).
$$
$\APP$ is EC-SPT if the last bound holds with $q=0$, and then the
infimum of $p$ is denoted by $p^*$, and is called the exponent of
EC-SPT.

\begin{thm}\label{thm9}

\quad

\quad $\APP$ is {\rm EC-PT}\ \ iff \ \ $\APP$ is {\rm EC-SPT}  \  \ iff

$$
m_0=1,\ \ B:=\sum_{j=1}^\infty \frac1{b_j}<\infty \ \ \mbox{and}\
\ \alpha^*=\liminf_{j\to \infty}\ \frac{\log\,a_j}j>0.
$$
If these conditions hold then the exponent of {\rm EC-SPT}
satisfies
$$
p^*\in\left[\max\left(B,\frac{\log(1+m_1)}{\alpha^*}\right),
  B+\frac{\log(1+\mmax)}{\alpha^*}\right].
$$
In particular, if $\alpha^*=\infty$ then $p^*=B$.
\end{thm}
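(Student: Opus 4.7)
The plan is to establish the chain EC-SPT $\Rightarrow$ EC-PT $\Rightarrow$ (the three conditions) $\Rightarrow$ EC-SPT. The first implication is trivial (take $q=0$), so the work is (a) to deduce the three conditions and the lower bounds on $p^*$ from EC-PT, and (b) to deduce EC-SPT together with the upper bound on $p^*$ from the three conditions. The case $\alpha^*=\infty$ then falls out by pinching the upper and lower bounds.

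\textbf{Necessity and lower bounds on $p^*$.} The EC-PT bound $n(\eps,\APP_s)\le Cs^{\,q}(1+\log\eps^{-1})^p$ implies $\log n(\eps,\APP_s)/(s+\log\eps^{-1})\to 0$, i.e.\ EC-WT, and Theorem~\ref{thm7} then forces $m_0=1$. Inverting the EC-PT bound produces an error estimate of the form $e(n,\APP_s)\le\ee\,\exp(-(n/(Cs^{\,q}))^{1/p})$, which is UEXP with uniform exponent $1/p$; Theorem~\ref{thm1}(ii) then yields $B<\infty$ and $1/p\le 1/B$, i.e.\ $p\ge B$. For the lower bound involving $\alpha^*$, I would mimic the proof of Theorem~\ref{thm8}: for fixed $\delta>0$, pick $\eps=\eps(s)$ with $x(\eps)=(1+\delta)(a_1+\cdots+a_s)\le(1+\delta)\,s\,a_s$, so that $\log\eps^{-1}=\tfrac12(1+\delta)(\log\omega^{-1})\sum_{j=1}^s a_j$. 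Lemma~\ref{lem1}(ii) gives $s\log(1+m_1)\le\log n(\eps,\APP_s)$, while EC-PT gives $\log n(\eps,\APP_s)\le\log C+q\log s+p\log(1+\log\eps^{-1})$. Dividing by $s$ and letting $s\to\infty$ yields $\log(1+m_1)\le p\,\alpha^*$, so $\alpha^*\ge\log(1+m_1)/p>0$.

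\textbf{Sufficiency and upper bound on $p^*$.} Assume $m_0=1$, $B<\infty$, and $\alpha^*>0$. For $x(\eps)\le a_1$ Lemma~\ref{lem1}(i) gives $n(\eps,\APP_s)=1$. For $x(\eps)>a_1$, use Lemma~\ref{lem1}(iii) together with $\lceil y\rceil-1\le y$ and the inequality $1+\mmax\,y\le(1+\mmax)y$, which is valid for $y\ge 1$ and hence applicable since $y=(x(\eps)/a_j)^{1/b_j}>1$ for $j\le j(\eps)$. This yields
\[
n(\eps,\APP_s)\le(1+\mmax)^{\min(s,j(\eps))}\,[x(\eps)]^{\sum_{j=1}^{\min(s,j(\eps))}1/b_j}\prod_{j=1}^{\min(s,j(\eps))}a_j^{-1/b_j}.
\]
Fix any $\delta\in(0,\alpha^*)$. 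The assumption $\alpha^*>0$ provides $j_\delta$ with $a_j\ge\ee^{\delta j}$ for $j\ge j_\delta$, so $j(\eps)\le\max(j_\delta-1,(\log x(\eps))/\delta)$ and therefore $(1+\mmax)^{\min(s,j(\eps))}=\mathcal{O}\bigl((\log\eps^{-1})^{\log(1+\mmax)/\delta}\bigr)$ uniformly in $s$. The same growth of $a_j$ makes $\prod_j a_j^{-1/b_j}$ uniformly bounded (all but finitely many factors are $\le 1$). Finally, $B<\infty$ gives $\sum_j 1/b_j\le B$ uniformly in $s$, hence $[x(\eps)]^{\sum_j 1/b_j}=\mathcal{O}\bigl((\log\eps^{-1})^B\bigr)$. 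Combining,
\[
n(\eps,\APP_s)=\mathcal{O}\bigl((\log\eps^{-1})^{B+\log(1+\mmax)/\delta}\bigr)
\]
uniformly in $s$. This is EC-SPT with exponent $\le B+\log(1+\mmax)/\delta$, and letting $\delta\uparrow\alpha^*$ gives $p^*\le B+\log(1+\mmax)/\alpha^*$. When $\alpha^*=\infty$ the upper and lower bounds both collapse to $B$, giving $p^*=B$.

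\textbf{Main obstacle.} The delicate point is controlling the prefactor $(1+\mmax)^{\min(s,j(\eps))}$: it is only the near-exponential growth of $a_j$ encoded in $\alpha^*>0$ that forces $j(\eps)=\mathcal{O}(\log\log\eps^{-1})$ and turns this factor into a polynomial in $1+\log\eps^{-1}$. A weaker growth of $a_j$ would leave an essentially exponential term in $1+\log\eps^{-1}$ (or in $s$) and destroy EC-SPT; conversely, the matching lower bound $p^*\ge\log(1+m_1)/\alpha^*$ comes from choosing $\eps$ so that Lemma~\ref{lem1}(ii) extracts a $(1+m_1)^s$ lower bound exactly when $x(\eps)$ just exceeds $a_1+\cdots+a_s$.
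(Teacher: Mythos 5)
Your proposal is correct and follows essentially the same route as the paper's proof: necessity via EC-WT $\Rightarrow m_0=1$, inversion to UEXP for $B<\infty$ and $p\ge B$, and the choice $x(\eps)=(1+\delta)(a_1+\cdots+a_s)$ with Lemma~\ref{lem1}(ii) for the $\alpha^*$ bound; sufficiency via Lemma~\ref{lem1}(iii) combined with $j(\eps)=\mathcal{O}(\log x(\eps))$ to tame the $(1+\mmax)^{\min(s,j(\eps))}$ factor. The only cosmetic difference is that you bound $\prod_j(x(\eps)/a_j)^{1/b_j}$ by splitting off $\prod_j a_j^{-1/b_j}$ as a uniformly bounded constant, whereas the paper bounds it directly by $(x(\eps)/a_1)^B$; these are equivalent.
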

\begin{proof}
We prove that EC-PT implies $m_0=1$, $B<\infty$ and $\alpha^*>0$,
and then that $m_0=1$, $B<\infty$ and $\alpha^*>0$ imply EC-SPT
and find bounds on the exponent of EC-SPT.

EC-PT implies EC-WT and therefore $m_0=1$. It is easy to show that
EC-PT implies UEXP. Indeed, the bound on EC-PT yields that
$$
e(n,\APP_s)\le {\rm e} \cdot {\rm e}^{-((n-1)/(C\,s^{\,q}))^{1/p}}
\ \ \ \ \mbox{for all} \ \ n\in\NN.
$$
Hence, UEXP holds and the exponent of UEXP is at least $1/p$. Then
Theorem~\ref{thm1} implies that $B<\infty$, and $p\ge B$.

To prove that $\alpha^*>0$, we proceed similarly as for EC-WT.
That is, for $\delta>0$ we take
$x(\eps)=(1+\delta)(a_1+\cdots+a_s)$ and then \eqref{lem1p2} of
Lemma~\ref{lem1} and the bound on EC-PT yield
$$
(1+m_1)^s\le n(\eps,\APP_s)\le
C\,s^{\,q}\,\left(1+\frac{(1+\delta)\,\log\,\omega^{-1}}2(a_1+\cdots+a_s)
\right)^p.
$$
Since $a_1\le a_2\le\cdots$, this implies that
$$
sa_s\ge a_1+\cdots +a_s\ge \frac2{(1+\delta)\,\log\,\omega^{-1}}
\left[\left(\frac{(1+m_1)^s}{C\,s^{\,q}}\right)^{1/p}-1\right].
$$
Hence,
$$
\alpha^*=\liminf_{s\to \infty}\,\frac{\log\,
  a_s}{s}\,\ge\,\frac{\log(1+m_1)}{p}>0,
$$
as claimed. This also shows that $p\ge \log(1+m_1)/\alpha^*$.

This reasoning also holds for all $p$ for which EC-SPT holds.
Therefore the exponent~$p^*$ of EC-SPT is at least $p^*\ge
\log(1+m_1)/\alpha^*$. Furthermore, $p^*$ cannot be smaller then
the reciprocal of the exponent of UEXP, so that $p^*\ge B$. This
proves the lower bound on $p^*$.

We now assume that $m_0=1$, $B<\infty$ and $\alpha^*>0$. Note that
$\alpha^*>0$ means that $a_j$ are exponentially large in $j$ for
large $j$. Indeed, for $\delta\in(0,\alpha^*)$ there is an integer
$j^*_\delta$ such that
$$
a_j\ge \exp(\delta\,j)\ \ \ \ \mbox{for all}\ \ j\ge j^*_\delta.
$$
This yields that for $j(\eps)$ defined by \eqref{13} we have
$$
j(\eps)\le \max\left(j^*_\delta,
\frac{\log\,x(\eps)}{\delta}\right).
$$
For $x(\eps)\le a_1$, \eqref{lem1p1} of Lemma~\ref{lem1} states
that $n(\eps,\APP_s)=1$, whereas for $x(\eps)>a_1$, \eqref{lem1p3} of Lemma~\ref{lem1} yields
\begin{eqnarray*}
n(\eps,\APP_s)&\le& \prod_{j=1}^{\min(s,j(\eps))}
\left(1+\mmax\left(\frac{x(\eps)}{a_j}\right)^{1/b_j}\right)\\
&\le&
\left[\prod_{j=1}^{\min(s,j(\eps))}\left(\frac{x(\eps)}{a_j}\right)^{1/b_j}
\right]\,(1+\mmax)^{\min(s,j(\eps))}\\
&\le&\left(\frac{x(\eps)}{a_1}\right)^B\,\max\left((1+\mmax)^{j^*_\delta},
[x(\eps)]^{(\log(1+\mmax))/\delta}\right).
\end{eqnarray*}
Since $x(\eps)=\Theta(\log\,\eps^{-1})$ we obtain EC-SPT with
$p\le B+(\log(1+\mmax))/\delta$. Taking $\delta$ arbitrarily close
to $\alpha^*$, we obtain that the exponent of EC-SPT is at most
$$
p^*\le B+\frac{\log(1+\mmax)}{\alpha^*}.
$$
This completes the proof.
\end{proof}

We now compare Theorems~\ref{5} and~\ref{9} for SPT and EC-SPT. In
both cases, we have $m_0=1$. However, the conditions on $\bsa$ and
$\bsb$ are quite different. SPT holds for all~$\bsb$, whereas for
EC-SPT we must assume that $B<\infty$, i.e., $b_j$ must go to
infinity at least like~$j$. The conditions on $\bsa$ are even more
striking. SPT holds for $a_j$ going to infinity quite slowly like
$\log\,j$, whereas EC-SPT requires that $a_j$ goes exponentially
fast to infinity with~$j$.
\end{itemize}

\section{Summary}\label{sec_sum}

In the following table we summarize the tractability
results. We tabulate the various notions of tractability with their
corresponding ``if and only if'' conditions:

\begin{center}
\renewcommand*\arraystretch{2}
\begin{tabular}{|l|l|}
\hline
Tractability notion & iff-conditions\\
\hline \hline
$(t_1,t_2)$-WT 
& $t_1 >1$ or $m_0=1$ \\
\hline
WT, UWT, QPT & $m_0=1$\\
\hline
PT, SPT & $m_0=1$, and
$\lim_{j
} \frac{a_j}{\log j}>0$\\
\hline
EC-$(t_1,t_2)$-WT 
& $t_1 >1$, or $t_2>1$ and $m_0=1$ \\
\hline
EC-WT & $m_0=1$, and
$\lim_{j
} a_j=\infty$\\
\hline
EC-UWT & $m_0=1$, and $\lim_{j}\frac{\log\,a_j}{\log\,j}=\infty$\\
\hline
EC-QPT & $m_0=1$, $\sup_{s
} \frac{\sum_{j=1}^s b_j^{-1}}{1+\log s}< \infty$, and
        $\liminf_{j
} \frac{(1+\log j)\log a_j}{j}>0$ \\
\hline
EC-PT, EC-SPT & $m_0=1$, $\sum_{j=1}^{\infty} b_j^{-1} < \infty$, and
       $\liminf_{j
} \frac{\log a_j}{j}>0$\\
\hline
\end{tabular}
\end{center}

\section*{Acknowledgements}

The authors would like to thank two anonymous referees for suggestions on how to improve 
the presentation of the results.

\vspace{0.5cm}
\begin{small}
\noindent\textbf{Authors' addresses:}
\\ \\
\noindent Christian Irrgeher, Peter Kritzer, Friedrich Pillichshammer,
\\ 
Department of Financial Mathematics and Applied Number Theory, 
Johannes Kepler University Linz, Altenbergerstr.\ 69, 4040 Linz, Austria\\
 \\
\noindent Henryk Wo\'{z}niakowski, \\
Department of Computer Science, Columbia University, New York 10027,
USA, and Institute of Applied Mathematics, 
University of Warsaw, ul.\ Banacha 2, 02-097 Warszawa, Poland\\ \\

\noindent \textbf{E-mail:} \\
\texttt{christian.irrgeher@jku.at}\\
\texttt{peter.kritzer@jku.at}\\
\texttt{friedrich.pillichshammer@jku.at} \\
\texttt{henryk@cs.columbia.edu}
\end{small}

\end{document}